\documentclass[10pt]{amsart}
\usepackage{pgfplots}
\usepackage{tikz}

\usetikzlibrary{automata}
\usetikzlibrary{positioning}
\usepackage{amsmath, amssymb} 
\usepackage{amsthm}
\usepackage{graphicx}
\usepackage[shortlabels]{enumitem}

\usepackage{amsfonts,geometry,hyperref}

\newtheorem{conj}{Conjecture}[section]

\newtheorem{remark}{Remark}[section]

\newtheorem{theorem}{Theorem}[section]

\newtheorem{lemma}[theorem]{Lemma}

\DeclareMathOperator{\Spec}{Spec}
\DeclareMathOperator{\L-Spec}{L-spec}
\DeclareMathOperator{\Q-Spec}{Q-spec}
\DeclareMathOperator{\CN-Spec}{CN-spec}
\DeclareMathOperator{\con}{con}

\DeclareMathOperator{\Nbd}{Nbd}
\DeclareMathOperator{\ABC}{ABC}
\DeclareMathOperator{\GA}{GA}
\DeclareMathOperator{\SCI}{SCI}

\allowdisplaybreaks
\begin{document}
\title[Topological indices and spectral properties of SGB-graphs]{Certain topological indices and spectral properties of SGB-graphs of finite cyclic groups}

\author[S. Das, A. Erfanian and R. K. Nath]{Shrabani Das, Ahmad Erfanian and Rajat Kanti Nath$^*$}

\address{S. Das, Department of Mathematical Sciences, Tezpur University, Napaam-784028, Sonitpur, Assam, India.
\newline
Department of Mathematics, Sibsagar University, Joysagar-785665, Sibsagar, Assam, India.}

\email{shrabanidas904@gmail.com}

\address{A. Erfanian, Department of Pure Mathematics, Faculty of Mathematical Sciences, Ferdowsi University of Mashhad,
	P.O. Box 1159-91775,
	Mashhad, Iran}
	\email{erfanian@um.ac.ir}

\address{R. K. Nath, Department of Mathematical Sciences, Tezpur University, Napaam-784028, Sonitpur, Assam, India.} 
\email{ rajatkantinath@yahoo.com}
\thanks{$^*$Corresponding Author}
\begin{abstract}
Let $L(G)$ be the set of all subgroups of a group $G$. The subgroup generating bipartite graph $\mathcal{B}(G)$ defined on $G$ is a bipartite graph whose vertex set is the union of two sets $G \times G$ and $L(G)$, and two vertices $(a, b) \in G \times G$ and $H \in L(G)$ are adjacent if $H$ is generated by $a$ and $b$. In this paper, we realize the structures of $\mathcal{B}(G)$ for cyclic groups of order $pq, p^2q$ and $p^2q^2$, where $p$ and $q$ are primes and $p \neq q$. We also deduce expressions for first and second Zagreb indices of these graphs and check the validity of Hansen-Vuki{\v{c}}evi{\'c} conjecture [Hansen, P. and Vuki{\v{c}}evi{\'c}, D. Comparing the Zagreb indices, {\em Croatica Chemica Acta}, \textbf{80}(2), 165-168, 2007]. Expressions of certain other degree-based topological indices of these graphs are also computed. We further compute various spectra and their corresponding energies of $\mathcal{B}(G)$ if $G$ is any cyclic group of order $p^n, pq, p^2q$ and $p^2q^2$, where $p$ and $q$ are two distinct primes and $n \geq 1$. We conclude the paper showing that $\mathcal{B}(G)$ satisfies E-LE conjecture [Gutman, I., Abreu, N. M. M., Vinagre, C. T. M., Bonifacioa, A. S. and Radenkovic, S. Relation between energy and Laplacian energy, {\em MATCH Communications in Mathematical and in Computer Chemistry}, \textbf{59}, 343--354, 2008] for these groups.
\end{abstract}

\thanks{ }
\subjclass[2020]{20D60, 05C25, 05C09}
\keywords{Bipartite graph, Topological index, Zagreb indices, Graph energy}

\maketitle

\section{Introduction}
Defining graphs on finite groups and studying them towards the characterization of finite groups/graphs has been an active area of research in recent times. Cayley graph, generating graph, power graph, commuting graph, B superA graph etc. (see \cite{cameron2021graphs, ANC-2022}) are  some examples of such graphs where the vertex set of the graph consists of the elements of the group. Other classes of graphs defined on groups whose vertices are the orders of the elements or the orders of the conjugacy classes have also been considered (see \cite{MN-2024, Lewis-2008}). There are graphs   defined on groups by considering the vertex set as the set of subgroups of the group. Intersection graph \cite{CP69}, inclusion graph \cite{DR16} and permutability graph \cite{RD14} are some examples of such graphs. Recently, a bipartite graph has been defined on a group by using its automorphism group \cite{MM-AE-RM-23}. Let $G$ be a finite group and $L(G) := \{H : H \text{ is a subgroup of } G\}$. In \cite{DEN-23}, we have introduced the subgroup generating bipartite graph (abbreviated as SGB-graph and denoted by $\mathcal{B}(G)$)    whose vertex set $V(\mathcal{B}(G))$ is the union of two sets $G \times G$ and $L(G)$, and two vertices $(a, b) \in G \times G$ and $H \in L(G)$ are adjacent if  $H = \langle a, b \rangle$, the subgroup generated by $a$ and $b$. One of the objectives in considering SGB-graph is its connection with 
% The probability generating  a given subgroup $H$ of $G$, denoted by $\Pr_H(G)$, is the probability that a randomly chosen pair of elements of $G$ generate $H$. The origin of $\Pr_H(G)$ lies in a paper of Hall \cite{Hall36} and a generalized version of which was studied in \cite{Pak99}. We have  obtained relations between $\mathcal{B}(G)$  and 
various probabilities such as probability generating  a given subgroup \cite{Di69}, commuting probability \cite{DNP-13}, cyclicity degree \cite{PSSW93}, nilpotency degree \cite{DGMW92}, solvability degree \cite{FGSV2000} associated to finite groups. 
%For any subgroup $H$ of $G$, it was shown  (see \cite[Lemma 3.1]{DEN-23}) that
%\begin{equation}\label{deg(H in L(G))}
%	\deg_{\mathcal{B}(G)}(H)=|G|^2 {\Pr}_H(G),
%\end{equation}
%where $\deg_{\mathcal{G}}(x)$ is the degree of any vertex $x$ in any graph $\mathcal{G}$.
In \cite{DEN-23}, we have also established connections between $\mathcal{B}(G)$ and generating graph of $G$ and discussed about various graph parameters of $\mathcal{B}(G)$ such as independence number, domination number, girth, diameter etc. Subsequently, we obtained Zagreb indices and various energies of SGB-graphs of certain dihedral and dicyclic groups in \cite{DEN-24} and \cite{DEN-24-3} respectively.
 
In Section 2, we realize the structures of  $\mathcal{B}(G)$ when $G$ is a cyclic group of order $pq, p^2q$ and $p^2q^2$ for any two distinct primes $p$ and $q$. In Section 3, we obtain first Zagreb index and second Zagreb index of $\mathcal{B}(G)$ for these graphs and check the validity of Hansen-Vuki{\v{c}}evi{\'c} conjecture \cite{hansen2007comparing}. While computing Zagreb indices of $\mathcal{B}(G)$ we have computed $\deg_{\mathcal{B}(G)}(H)$ for all $H \in L(G)$ for the above mentioned groups. Using these information, in Section 4, we  also compute Randic Connectivity index, Atom-Bond Connectivity index, Geometric-Arithmetic index, Harmonic index and Sum-Connectivity index of $\mathcal{B}(G)$. In Section 5, we compute spectrum, Laplacian spectrum,  signless Laplacian spectrum, common neighbourhood spectrum and their corresponding energies of $\mathcal{B}(G)$ for the groups considered in Section 2 along with cyclic group of order $p^n$, where $p$ is any prime and $n \geq 1$. We also show that $\mathcal{B}(G)$ is hypoenergetic but neither hyperenergetic, L-hyperenergetic, Q-hyperenergetic nor CN-hyperenergetic if $G$ is one of these groups. We conclude the paper showing that  $\mathcal{B}(G)$ satisfies E-LE conjecture \cite{E-LE-Gutman} for these groups.
 \section{Structures of $\mathcal{B}(G)$}
 In \cite{DEN-23, DEN-24}, Das et al. obtained the structures of $\mathcal{B}(G)$ when $G$ is a cyclic group of order $2p, 2p^2, 4p$ and $4p^2$ where $p$ is an odd prime. They also obtained the structure of $\mathcal{B}(G)$ when $G$ is a cyclic group of order $p^n$, where $p$ is any prime and $n \geq 1$. In this section, we obtain the structures of $\mathcal{B}(G)$ when $G$ is a cyclic group of order $pq, p^2q$ and $p^2q^2$, where $p$ and $q$ are two primes such that $p \neq q$. We write $\sqcup$ to denote disjoint union of sets or graphs and $mK_{1, r}$ to denote $m$ copies of the star $K_{1, r}$. Moreover, we write $\Nbd_G(x)$ to denote the neighborhood of $x \in V(G)$ which is the set of all $y \in V(G)$ such that $y$ is adjacent to $x$. Also, $G[S]$ denotes the subgraph of G induced by a subset $S$ of $V(G)$.
 We begin this section by recalling the following result. 
 \begin{theorem}\label{structure_cyclic_p^n}
\cite{DEN-24} If $G$ is a cyclic group of order $p^n$, where $p$ is a prime and $n \geq 1$, then $\mathcal{B}(G)=K_2 \sqcup K_{1, p^2-1} \sqcup K_{1, p^2(p^2-1)} \sqcup \cdots \sqcup K_{1, p^{2n-2}(p^2-1)}$.
\end{theorem}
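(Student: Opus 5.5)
Since $\mathcal{B}(G)$ is bipartite and each pair $(a,b) \in G \times G$ generates exactly one subgroup, namely $\langle a, b\rangle$, every vertex of $G \times G$ has degree exactly $1$. Hence $\mathcal{B}(G)$ is a disjoint union of stars $K_{1, \deg(H)}$, one centred at each $H \in L(G)$, except that a subgroup with degree $1$ gives a $K_2$. So the plan is to list $L(G)$ and compute $\deg_{\mathcal{B}(G)}(H)$ for each $H$.

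For $G$ cyclic of order $p^n$, the subgroups form a chain $H_0 = \{e\} < H_1 < \cdots < H_n = G$ with $|H_i| = p^i$, one subgroup of each order. A pair $(a,b)$ generates $H_i$ exactly when both $a, b \in H_i$ and not both lie in the unique maximal subgroup $H_{i-1}$ of $H_i$. The key point is that $\langle a,b\rangle$ is a subgroup of the cyclic group $H_i$, and every proper subgroup of $H_i$ is contained in $H_{i-1}$. Therefore
\[
\deg(H_0) = 1, \qquad \deg(H_i) = p^{2i} - p^{2(i-1)} = p^{2i-2}(p^2-1) \quad (1 \le i \le n).
\]

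The trivial subgroup together with the pair $(e,e)$ gives $K_2$, and $H_i$ gives $K_{1, p^{2i-2}(p^2-1)}$. Taking the disjoint union over $i$ yields the stated decomposition.

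There is no real obstacle here. The only point needing care is justifying that the generated subgroup is determined by membership in the chain, i.e.\ that the subgroups of a cyclic $p$-group are totally ordered. This follows from uniqueness of the subgroup of each order in a cyclic group.
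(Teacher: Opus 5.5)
Your proposal is correct: since each vertex of $G\times G$ has degree one, $\mathcal{B}(G)$ splits into stars centred at the subgroups, and the chain structure of $L(G)$ gives $\deg(H_i)=|H_i|^2-|H_{i-1}|^2=p^{2i-2}(p^2-1)$ for $i\ge 1$, with $\deg(H_0)=1$. The paper only cites this result from \cite{DEN-24}, but your argument is essentially the one used in the paper's own proofs for orders $pq$, $p^2q$ and $p^2q^2$: decompose $\mathcal{B}(G)$ into the stars $\mathcal{B}(G)[\{H\}\sqcup \Nbd_{\mathcal{B}(G)}(H)]$ and count, for each $H$, the pairs in $H\times H$ minus those generating smaller subgroups.
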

We remark that one may choose the primes $p$ and $q$ such that $p<q$ without loss of generality while considering cyclic groups of order $pq$ and $p^2q^2$. 
\begin{theorem}\label{structure_cyclic_pq}
If $G$ is a cyclic group of order $pq$, where $p$, $q$ are two  primes such that $p < q$, then $\mathcal{B}(G)=K_2 \sqcup K_{1, p^2-1} \sqcup K_{1, q^2-1} \sqcup K_{1, p^2q^2-p^2-q^2+1}$.
\end{theorem}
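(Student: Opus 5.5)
Since every pair $(a,b) \in G \times G$ generates exactly one subgroup $\langle a, b\rangle$, each vertex of $G\times G$ has degree exactly $1$ in $\mathcal{B}(G)$. The plan is therefore to show that $\mathcal{B}(G)$ is a disjoint union of stars, one centred at each $H \in L(G)$, whose leaves are the pairs generating $H$. Thus $\mathcal{B}(G)=\bigsqcup_{H\in L(G)} K_{1,\deg(H)}$, where $\deg(H)$ is the number of pairs $(a,b)\in G\times G$ with $\langle a,b\rangle = H$, and the task reduces to listing $L(G)$ and computing these degrees. (A star $K_{1,1}$ is just $K_2$.)

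For $G$ cyclic of order $pq$, there is exactly one subgroup for each divisor of $pq$: $\{e\}$, $H_p$ of order $p$, $H_q$ of order $q$, and $G$ itself. For any $H\le G$, every pair $(a,b)$ with $\langle a,b\rangle=H$ lies in $H\times H$. Hence $\deg(H)$ equals $|H|^2$ minus the number of pairs in $H\times H$ generating a proper subgroup of $H$. The degrees then follow in turn.
\begin{itemize}
\item $\{e\}$ is generated only by $(e,e)$, so $\deg(\{e\})=1$ and we get $K_2$.
\item $H_p$ has prime order, so its only proper subgroup is trivial; hence $\deg(H_p)=p^2-1$, giving $K_{1,p^2-1}$. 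Likewise $\deg(H_q)=q^2-1$.
\item For $G$, every pair in $G\times G$ generates some subgroup, so
\[
\deg(G)=p^2q^2-\deg(\{e\})-\deg(H_p)-\deg(H_q)=p^2q^2-1-(p^2-1)-(q^2-1)=p^2q^2-p^2-q^2+1 .
\]
\end{itemize}

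As a consistency check, this equals $(p^2-1)(q^2-1)$, which matches the multiplicative count via the Chinese remainder decomposition $G\cong \mathbb{Z}_p\times\mathbb{Z}_q$. A pair generates $G$ exactly when its projections generate both $\mathbb{Z}_p$ and $\mathbb{Z}_q$, i.e.\ neither projection pair is $(0,0)$. Theorem~\ref{structure_cyclic_p^n} with $n=1$ also confirms the $K_2\sqcup K_{1,p^2-1}$ pieces.

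There is no serious obstacle. The only point needing care is the justification that the components are exactly these stars: distinct subgroups are distinct centres, and each pair vertex has exactly one neighbour. Together these make the graph a disjoint union of stars, and the degree count then fixes the sizes.
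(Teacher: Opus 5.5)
Your proof is correct and follows the same route as the paper: $\mathcal{B}(G)$ splits into stars centred at the four subgroups, the proper subgroups have degrees $1$, $p^2-1$ and $q^2-1$, and $\deg(G)$ is what remains of $p^2q^2$. Your CRT cross-check that $\deg(G)=(p^2-1)(q^2-1)$ is a nice independent confirmation that the paper does not include.
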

\begin{proof}
Let $G = \langle a \rangle$. Then $V(\mathcal{B}(G))=G \times G \sqcup \{\{1\}, \langle a^q \rangle, \langle a^p \rangle, \langle a \rangle\}$ as there are exactly four subgroups of $G$ and they are cyclic. Here $|\langle a^q \rangle|=p, |\langle a^p \rangle|=q$ \, and \, $|\langle a \rangle|=pq$. As such, \, $\mathcal{B}(G)[\{\langle a^q \rangle\} \sqcup $ $\Nbd_{\mathcal{B}(G)}(\langle a^q \rangle)] =K_{1, p^2-1}$ and $\mathcal{B}(G)[\{\langle a^p \rangle\} \sqcup \Nbd_{\mathcal{B}(G)}(\langle a^p \rangle)] =K_{1, q^2-1}$. Thus, $|\Nbd_{\mathcal{B}(G)}(\langle a \rangle)|=p^2q^2-(p^2+q^2-2+1)$ and $\mathcal{B}(G)[\{\langle a \rangle\} \sqcup \Nbd_{\mathcal{B}(G)}(\langle a \rangle)] = K_{1, p^2q^2-p^2-q^2+1}$. Therefore,
	\begin{align*}
		\mathcal{B}(G)&= \underset{H \in L(G)}{\sqcup} \mathcal{B}(G)[H \sqcup \Nbd_{\mathcal{B}(G)}(H)] \\
		&= K_2 \sqcup K_{1, p^2-1} \sqcup K_{1, q^2-1} \sqcup K_{1, p^2q^2-p^2-q^2+1}.
	\end{align*}
	This completes the proof.
\end{proof}
\begin{theorem}\label{structure_cyclic_p^2q}
If $G$ is a cyclic group of order $p^2q$, where $p$, $q$ are two distinct primes, then $\mathcal{B}(G)=K_2 \sqcup K_{1, p^2-1} \sqcup K_{1, p^4-p^2} \sqcup K_{1, q^2-1} \sqcup K_{1, p^2q^2-p^2-q^2+1} \sqcup K_{1, p^4q^2-p^2q^2-p^4+p^2}$.
\end{theorem}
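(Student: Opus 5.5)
We follow the pattern of the proof of Theorem \ref{structure_cyclic_pq}. Write $G=\langle a\rangle$ with $|a|=p^2q$. Since $G$ is cyclic, its subgroups correspond to the divisors of $p^2q$. Hence
\[
L(G)=\{\{1\},\ \langle a^{pq}\rangle,\ \langle a^{q}\rangle,\ \langle a^{p^2}\rangle,\ \langle a^{p}\rangle,\ \langle a\rangle\},
\]
and these subgroups have orders $1,p,p^2,q,pq,p^2q$ respectively.

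Every pair $(x,y)\in G\times G$ generates exactly one subgroup. So the vertices of $G\times G$ are partitioned among the neighbourhoods of the subgroups, and
\[
\mathcal{B}(G)=\underset{H\in L(G)}{\sqcup}\mathcal{B}(G)[\{H\}\sqcup \Nbd_{\mathcal{B}(G)}(H)],
\]
where each piece is a star $K_{1,\deg(H)}$. It therefore suffices to compute $\deg_{\mathcal{B}(G)}(H)$ for each $H$, which is the number of pairs $(x,y)$ with $\langle x,y\rangle=H$.

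We count these pairs using the fact that the pairs lying in a subgroup $H$ are exactly the pairs generating some subgroup $K\le H$. Thus $|H|^2=\sum_{K\le H}\deg(K)$, and we peel off the degrees inductively in order of increasing subgroup size.
\begin{itemize}
\item $\{1\}$ is generated only by $(1,1)$, so it contributes $K_2$.
\item For the order-$p$ subgroup, $\deg=p^2-1$.
\item For the order-$p^2$ subgroup, $\deg=p^4-p^2$. This agrees with Theorem \ref{structure_cyclic_p^n} applied to the cyclic subgroup of order $p^2$.
\item For the order-$q$ subgroup, $\deg=q^2-1$.
\item For the order-$pq$ subgroup, $\deg=p^2q^2-p^2-q^2+1$. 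This is taken directly from Theorem \ref{structure_cyclic_pq}, since the subgroups of this subgroup are those of a cyclic group of order $pq$.
\item For $G$ itself, $\deg=p^4q^2$ minus the sum of the five degrees above.
\end{itemize}

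The last item is the only computation requiring care. The five smaller degrees sum to
\[
1+(p^2-1)+(p^4-p^2)+(q^2-1)+(p^2q^2-p^2-q^2+1)=p^4+p^2q^2-p^2 .
\]
Hence
\[
\deg(G)=p^4q^2-p^2q^2-p^4+p^2 .
\]
As a cross-check, this factors as $(p^4-p^2)(q^2-1)$, which matches the multiplicativity of generating pairs for cyclic groups via the CRT decomposition $G\cong C_{p^2}\times C_q$.

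Collecting the six stars gives the stated decomposition. No genuine obstacle arises; the only risk is arithmetic error in the final subtraction, which the factorization cross-check guards against.
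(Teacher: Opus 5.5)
Your proposal is correct and follows essentially the same route as the paper: you list the six cyclic subgroups, read off the star sizes for the proper subgroups, and obtain $\deg(\langle a\rangle)=p^4q^2-p^2q^2-p^4+p^2$ by subtracting their sum from $p^4q^2$. Your identity $|H|^2=\sum_{K\le H}\deg(K)$ and the CRT factorization check $(p^4-p^2)(q^2-1)$ only make explicit the justification that the paper leaves implicit.
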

\begin{proof}
Let $G = \langle a \rangle$. Then $V(\mathcal{B}(G))=G \times G \sqcup \{\{1\}, \langle a^{pq} \rangle, \langle a^q \rangle, \langle a^{p^2} \rangle, \langle a^p \rangle, \langle a \rangle\}$ as there are exactly six subgroups of $G$ and they are cyclic. Here $|\langle a^{pq} \rangle|=p, |\langle a^q \rangle|=p^2, |\langle a^{p^2} \rangle|=q, |\langle a^p \rangle|=pq$ \, and \, $|\langle a \rangle|=p^2q$. As such, \, $\mathcal{B}(G)[\{\langle a^{pq} \rangle\} \sqcup \Nbd_{\mathcal{B}(G)}(\langle a^{pq} \rangle)] =K_{1, p^2-1}, \mathcal{B}(G)[\{\langle a^q \rangle\} \sqcup \Nbd_{\mathcal{B}(G)}(\langle a^q \rangle)] =K_{1, p^4-p^2}, \mathcal{B}(G)[\{\langle a^{p^2} \rangle\} \sqcup \Nbd_{\mathcal{B}(G)}(\langle a^{p^2} \rangle)] =K_{1, q^2-1}$ and $\mathcal{B}(G)[\{\langle a^p \rangle\} \sqcup \Nbd_{\mathcal{B}(G)}(\langle a^p \rangle)] =K_{1, p^2q^2-p^2-q^2+1}$. Thus, $|\Nbd_{\mathcal{B}(G)}(\langle a \rangle)|=p^4q^2-(p^2+p^4-p^2+q^2+p^2q^2-p^2-q^2-2+2)$ and $\mathcal{B}(G)[\{\langle a \rangle\} \sqcup \Nbd_{\mathcal{B}(G)}(\langle a \rangle)] =K_{1, p^4q^2-p^2q^2-p^4+p^2}$. Therefore,
	\begin{align*}
		\mathcal{B}(G)&= \underset{H \in L(G)}{\sqcup} \mathcal{B}(G)[H \sqcup \Nbd_{\mathcal{B}(G)}(H)] \\
		&= K_2 \sqcup K_{1, p^2-1} \sqcup K_{1, p^4-p^2} \sqcup K_{1, q^2-1} \sqcup K_{1, p^2q^2-p^2-q^2+1} \sqcup K_{1, p^4q^2-p^2q^2-p^4+p^2}.
	\end{align*}
	This completes the proof.
\end{proof}
\begin{theorem}\label{structure_cyclic_p^2q^2}
If $G$ is a cyclic group of order $p^2q^2$, where $p$, $q$ are two primes such that $p < q$, then $\mathcal{B}(G)=K_2 \sqcup K_{1, p^2-1} \sqcup K_{1, p^4-p^2} \sqcup K_{1, q^2-1} \sqcup K_{1, q^4-q^2} \sqcup K_{1, p^2q^2-p^2-q^2+1} \sqcup K_{1, p^2q^4-p^2q^2-q^4+q^2} \sqcup K_{1, p^4q^2-p^2q^2-p^4+p^2} \sqcup K_{1, p^4q^4-p^2q^4-p^4q^2+p^2q^2}$.
\end{theorem}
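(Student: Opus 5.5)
We follow the same template as the proofs of Theorems \ref{structure_cyclic_pq} and \ref{structure_cyclic_p^2q}. Write $G=\langle a\rangle$ with $|G|=p^2q^2$. Since $G$ is cyclic, it has exactly one subgroup of each order $d$ dividing $p^2q^2$, namely $\langle a^{p^2q^2/d}\rangle$. So $L(G)$ has exactly nine elements: $\{1\}$, $\langle a^{pq^2}\rangle$, $\langle a^{q^2}\rangle$, $\langle a^{p^2q}\rangle$, $\langle a^{p^2}\rangle$, $\langle a^{pq}\rangle$, $\langle a^{p}\rangle$, $\langle a^{q}\rangle$ and $\langle a\rangle$. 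Their orders are $1,p,p^2,q,q^2,pq,pq^2,p^2q,p^2q^2$ respectively.

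Every pair $(x,y)\in G\times G$ generates exactly one subgroup. Hence each vertex of $G\times G$ has degree one in $\mathcal{B}(G)$, and
\[
\mathcal{B}(G)=\underset{H\in L(G)}{\sqcup}\mathcal{B}(G)[\{H\}\sqcup \Nbd_{\mathcal{B}(G)}(H)],
\]
where each piece is a star $K_{1,\deg(H)}$. The trivial subgroup is generated only by $(1,1)$, which gives the $K_2$ component. It therefore remains to compute $\deg(H)$, the number of pairs in $H\times H$ generating $H$.

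To compute $\deg(H)$, we use the fact that pairs in $H\times H$ are partitioned according to the subgroup they generate. Thus $|H|^2=\sum_{K\le H}\deg(K)$, and $\deg(H)$ is obtained recursively by subtracting the degrees of the proper subgroups of $H$, exactly as in the earlier proofs. This gives:
\begin{itemize}
\item[(i)] $p^2-1$ for order $p$;
\item[(ii)] $p^4-p^2$ for order $p^2$, as in Theorem \ref{structure_cyclic_p^n};
\item[(iii)] $q^2-1$ and $q^4-q^2$ for orders $q$ and $q^2$;
\item[(iv)] $p^2q^2-p^2-q^2+1$ for order $pq$, by Theorem \ref{structure_cyclic_pq};
\item[(v)] $p^4q^2-p^2q^2-p^4+p^2$ for order $p^2q$, and by symmetry $p^2q^4-p^2q^2-q^4+q^2$ for order $pq^2$, by Theorem \ref{structure_cyclic_p^2q}.
\end{itemize}

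The only genuinely new computation is $\deg(\langle a\rangle)$:
\[
\deg(\langle a\rangle)=p^4q^4-1-\sum_{K<G,\,K\neq\{1\}}\deg(K).
\]
As a shortcut and a check, we can instead use multiplicativity. For cyclic groups, $\deg(C_n)=n^2\prod_{r\mid n}(1-r^{-2})$, which is Jordan's totient $J_2(n)$. This gives $J_2(p^2q^2)=J_2(p^2)J_2(q^2)=(p^4-p^2)(q^4-q^2)=p^4q^4-p^2q^4-p^4q^2+p^2q^2$. It also confirms all the other degrees listed above.

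The main obstacle is only bookkeeping: the subtraction for $\langle a\rangle$ involves eight terms, and it must simplify to the stated product. Verifying it against the factorization $(p^4-p^2)(q^4-q^2)$ settles this. Assembling the nine stars then gives exactly the stated decomposition.
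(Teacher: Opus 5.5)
Your proposal is correct and follows essentially the same route as the paper. Both list the nine cyclic subgroups, observe that each $H$ spans a star $K_{1,\deg(H)}$, read off the degrees of the proper subgroups, and obtain $\deg(\langle a\rangle)=p^4q^4-p^2q^4-p^4q^2+p^2q^2$ by subtracting their sum from $p^4q^4$. Your remark that $\deg_{\mathcal{B}(G)}(H)$ counts generating pairs in $H\times H$ (which is what lets the earlier theorems apply to subgroups) and your Jordan totient check $J_2(p^2q^2)=(p^4-p^2)(q^4-q^2)$ are sound additions that the paper leaves implicit; note that in that formula $r$ should range over the prime divisors of $n$.
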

\begin{proof}
Let $G = \langle a \rangle$. Then $V(\mathcal{B}(G))=G \times G \sqcup \{\{1\}, \langle a^{pq^2} \rangle, \langle a^{q^2} \rangle, \langle a^{p^2q} \rangle, \langle a^{p^2} \rangle, \langle a^{pq} \rangle, \langle a^p \rangle,$  $\langle a^q \rangle, \langle a \rangle\}$ as there are exactly nine subgroups of $G$ and they are cyclic. Here $|\langle a^{pq^2} \rangle|=p$, $|\langle a^{q^2} \rangle|=p^2$, $|\langle a^{p^2q} \rangle|=q$, $|\langle a^{p^2} \rangle|=q^2$, $|\langle a^{pq} \rangle|=pq$, $|\langle a^p \rangle|=pq^2$, $|\langle a^q \rangle|=p^2q$ \, and \, $|\langle a \rangle|=p^2q^2$. As such, \, 

  $\mathcal{B}(G)[\{\langle a^{pq^2} \rangle\} \sqcup \Nbd_{\mathcal{B}(G)}(\langle a^{pq^2} \rangle)] =K_{1, p^2-1}$, \, $\mathcal{B}(G)[\{\langle a^{q^2} \rangle\}$ $\sqcup \Nbd_{\mathcal{B}(G)}(\langle a^{q^2} \rangle)] =K_{1, p^4-p^2}$, \,

  $\mathcal{B}(G)[\{\langle a^{p^2q} \rangle\} \sqcup \Nbd_{\mathcal{B}(G)}(\langle a^{p^2q} \rangle)] =K_{1, q^2-1}$, \, $\mathcal{B}(G)[\{\langle a^{p^2} \rangle\}$ $ \sqcup \Nbd_{\mathcal{B}(G)}(\langle a^{p^2} \rangle)] \, = \, K_{1, q^4-q^2}$, \, \quad
 
\noindent  $\mathcal{B}(G)[\{\langle a^{pq} \rangle\} \sqcup \Nbd_{\mathcal{B}(G)}(\langle a^{pq} \rangle)] \, = \, K_{1, p^2q^2-p^2-q^2+1}$, $\mathcal{B}(G)[\{\langle a^{p} \rangle\} \sqcup \Nbd_{\mathcal{B}(G)}(\langle a^{p} \rangle)] =K_{1, p^2q^4-p^2q^2-q^4+q^2}$ \, and \, $\mathcal{B}(G)[\{\langle a^{q} \rangle\} \, \sqcup \, \Nbd_{\mathcal{B}(G)}(\langle a^{q} \rangle)]$ $ =K_{1, p^4q^2-p^2q^2-p^4+p^2}$. 

Thus, \, $|\Nbd_{\mathcal{B}(G)}(\langle a \rangle)|=p^4q^4-(p^2+p^4-p^2+q^2+q^4-q^2+p^2q^2-p^2-q^2+p^2q^4-p^2q^2-q^4+q^2+p^4q^2-p^2q^2-p^4+p^2-2+2)$ and $\mathcal{B}(G)[\{\langle a \rangle\} \sqcup \Nbd_{\mathcal{B}(G)}(\langle a \rangle)] =K_{1, p^4q^4-p^2q^4-p^4q^2+p^2q^2}$. Therefore,
	\begin{align*}
		\mathcal{B}(G)&= \underset{H \in L(G)}{\sqcup} \mathcal{B}(G)[H \sqcup \Nbd_{\mathcal{B}(G)}(H)] \\
		&= K_2 \sqcup K_{1, p^2-1} \sqcup K_{1, p^4-p^2} \sqcup K_{1, q^2-1} \sqcup K_{1, q^4-q^2} \sqcup K_{1, p^2q^2-p^2-q^2+1}  \\ 
           & \qquad \qquad \sqcup K_{1, p^2q^4-p^2q^2-q^4+q^2} \sqcup K_{1, p^4q^2-p^2q^2-p^4+p^2} \sqcup K_{1, p^4q^4-p^2q^4-p^4q^2+p^2q^2}.
	\end{align*}
	This completes the proof.
\end{proof}

%We conclude this section noting that   $\mathcal{B}(G)$ is a union of   star graphs and number of components in $\mathcal{B}(G)$ is $|L(G)|$. For each subgroup $H$ of $G$ we have $\mathcal{B}(G)[H \sqcup \Nbd(H)] = K_{1, m}$, for some $m$.  

\section{Topological indices of $\mathcal{B}(G)$}
 Let $\Gamma$ be the set of all graphs. A topological index is a function $T :  \Gamma \to \mathbb{R}$ such that $T(\mathcal{G}_1) = T(\mathcal{G}_2)$ whenever the graphs $\mathcal{G}_1$ and $\mathcal{G}_2$ are isomorphic. These numerical invariants are used to obtain information about the physical and chemical properties of some molecules. Various topological indices have been defined using different parameters of graphs since 1947. Some of them have proved to be useful also in other areas where connectivity patterns play an important role, such as interprocessor connections and complex networks. A couple of such popular degree-based topological indices are the Zagreb indices. These were introduced by Gutman and Trinajsti{\'c} \cite{Gut-Trin-72} in 1972, and are used in examining the dependence of total $\pi$-electron energy on molecular structure. Zagreb indices are also used in studying molecular complexity, chirality, ZE-isomerism and heterosystems etc. \cite{Z-index-30y-2003}. One can find a survey on the mathematical properties of Zagreb indices in \cite{Gut-Das-2004}. Zagreb indices of commuting and non-commuting graphs of finite non-abelian groups have been studied in \cite{DSN-23, mirzargar2012some}. Zagreb indices of super commuting graphs and SGB-graphs of finite groups have been computed in \cite{DN-24} and \cite{DEN-24} respectively. 

  Let $\mathcal{G}$ be a simple undirected graph with vertex set $V(\mathcal{G})$ and edge set $e(\mathcal{G})$. The first and second Zagreb indices of $\mathcal{G}$, denoted by $M_{1}(\mathcal{G})$ and $M_{2}(\mathcal{G})$ respectively, are defined as 
\[
M_{1}(\mathcal{G}) = \sum\limits_{v \in V(\mathcal{G})} \deg(v)^{2}  \text{ and }  M_{2}(\mathcal{G}) = \sum\limits_{uv \in e(\mathcal{G})} \deg(u)\deg(v).
\]
In 2007, Hansen and Vuki{\v{c}}evi{\'c} posed the following conjecture to show a comparison between the first and second Zagreb indices \cite{hansen2007comparing}.
\begin{conj}\label{Conj}
	(Hansen-Vuki{\v{c}}evi{\'c} Conjecture) For any simple finite graph $\mathcal{G}$, 
	\begin{equation}\label{Conj-eq}
		\dfrac{M_{2}(\mathcal{G})}{\vert e(\mathcal{G}) \vert} \geq \dfrac{M_{1}(\mathcal{G})}{\vert V(\mathcal{G}) \vert} .
	\end{equation}
\end{conj}
In the same paper, they disproved the conjecture by showing that it is not true  if $\Gamma = K_{1, 5} \sqcup K_3$ \cite{hansen2007comparing}. However, Hansen and Vuki{\v{c}}evi{\'c} \cite{hansen2007comparing}  showed that  Conjecture \ref{Conj}  holds for chemical graphs. 
%In \cite{vukicevic2007comparing}, it was shown that the conjecture holds for trees with equality in \eqref{Conj-eq} when $\Gamma$ is a star graph. In \cite{liu2008conjecture}, it was shown that the conjecture holds for connected unicyclic graphs with equality when the graph is a cycle. %The case when equality holds in \eqref{Conj-eq} is studied extensively in \cite{vukicevic2011some}.
Upon further studies, it was shown that the conjecture holds for trees and with equality in \eqref{Conj-eq} when $\Gamma$ is a star graph \cite{vukicevic2007comparing}. Thereafter, it was shown that the conjecture holds for connected unicyclic graphs with equality when the graph is a cycle (see \cite{liu2008conjecture}). The case when equality holds in \eqref{Conj-eq} is studied extensively in \cite{vukicevic2011some}. Das et al. \cite{DSN-23} have obtained various finite non-abelian groups such that their commuting/non-commuting graphs satisfy Hansen-Vuki{\v{c}}evi{\'c} Conjecture. Several finite groups such that their super commuting graphs satisfy Hansen-Vuki{\v{c}}evi{\'c} Conjecture have also been obtained by Das and Nath \cite{DN-24} recently. Zagreb indices of commuting conjugacy class graph have been computed and verified  Conjecture \ref{Conj} in \cite{Das-Nath-2023} for the classes of finite groups considered in \cite{Salah-2020, SA-2020, SA-CA-2020}. 
A survey on comparing Zagreb indices can be found in \cite{Liu-You-2011}. 

 In this section, we obtain expressions for first and second Zagreb indices of $\mathcal{B}(G)$ to finally show that $\mathcal{B}(G)$ satisfying Hansen-Vuki{\v{c}}evi{\'c} conjecture for the groups considered in Section 2. The following lemma is useful in our computations.
%We  also check if $\mathcal{B}(D_{2p}), \mathcal{B}(D_{2p^2}), \mathcal{B}(Q_{4p})$ and $\mathcal{B}(Q_{4p^2})$ satisfy Hansen-Vuki{\v{c}}evi{\'c} conjecture. We begin with the following result.
\begin{lemma}\label{Zagreb_indices_B(G)}
   Let $G$ be a finite group. Then the Zagreb indices of $\mathcal{B}(G)$ are given by
	\[
	M_1(\mathcal{B}(G))=|G|^2+\sum_{H \in L(G)}\left(\deg_{\mathcal{B}(G)}(H)\right)^2 \quad \text{and} \quad M_2(\mathcal{B}(G))= M_1(\mathcal{B}(G))-|G|^2.
	\]
 Further, $\mathcal{B}(G)$ satisfies Hansen-Vuki{\v{c}}evi{\'c} conjecture if and only if $|L(G)|M_2(\mathcal{B}(G))-|G|^4 \geq 0$.
\end{lemma}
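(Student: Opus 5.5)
The plan is to exploit the fact that every vertex $(a,b)\in G\times G$ has degree exactly $1$ in $\mathcal{B}(G)$. Indeed, $(a,b)$ is adjacent to $H$ if and only if $H=\langle a,b\rangle$, and this subgroup is uniquely determined by $a$ and $b$. Since $\mathcal{B}(G)$ is bipartite with parts $G\times G$ and $L(G)$, the vertices of $L(G)$ are adjacent only to vertices of $G\times G$. Each edge therefore has the form $\{(a,b),H\}$ with $H=\langle a,b\rangle$, and
\[
|e(\mathcal{B}(G))|=|G\times G|=|G|^2=\sum_{H\in L(G)}\deg_{\mathcal{B}(G)}(H),
\]
the last equality being the handshake count from the $L(G)$ side.

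For $M_1$, split the vertex sum into the two parts. The part $G\times G$ contributes $\sum 1^2=|G|^2$, and the part $L(G)$ contributes $\sum_{H}\deg(H)^2$. This gives the first formula.

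For $M_2$, group the edges by their endpoint in $L(G)$. Each edge $\{(a,b),H\}$ contributes $1\cdot\deg(H)$, and $H$ lies on exactly $\deg(H)$ edges. Hence
\[
M_2=\sum_{H}\deg(H)\cdot\deg(H)=\sum_H\deg(H)^2=M_1-|G|^2.
\]

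For the Hansen--Vuki\v{c}evi\'c criterion, substitute $|V(\mathcal{B}(G))|=|G|^2+|L(G)|$, $|e(\mathcal{B}(G))|=|G|^2$ and $M_1=M_2+|G|^2$ into \eqref{Conj-eq}. The inequality becomes
\[
\frac{M_2}{|G|^2}\ge \frac{M_2+|G|^2}{|G|^2+|L(G)|}.
\]
Both denominators are positive, so cross-multiplying gives an equivalent inequality:
\[
M_2\left(|G|^2+|L(G)|\right)\ge |G|^2M_2+|G|^4.
\]
Cancelling $|G|^2M_2$ leaves $|L(G)|M_2-|G|^4\ge 0$, and since every step is reversible this is the claimed if-and-only-if. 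The argument has no real obstacle. The only point needing care is the observation that each pair $(a,b)$ has exactly one neighbour, which makes the edge count and the degree-$1$ contributions immediate.
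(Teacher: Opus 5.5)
Your proposal is correct and follows the paper's proof essentially step for step: degree-$1$ vertices in $G\times G$ give the split for $M_1$, grouping edges by $H$ gives $M_2=\sum_{H}\deg_{\mathcal{B}(G)}(H)^2$, and the criterion comes from substituting $|V(\mathcal{B}(G))|=|G|^2+|L(G)|$ and $|e(\mathcal{B}(G))|=|G|^2$. The only cosmetic differences are that you derive the edge count directly from the degree-$1$ observation (the paper cites it from earlier work), and you cross-multiply instead of writing the difference over the common positive denominator $|G|^2(|G|^2+|L(G)|)$.
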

\begin{proof}
We have 
\[M_{1}(\mathcal{B}(G)) = \sum_{v \in V(\mathcal{B}(G))} \deg(v)^{2} =\sum_{(a, b) \in G \times G}\left(\deg_{\mathcal{B}(G)}((a, b))\right)^2+\sum_{H \in L(G)}\left(\deg_{\mathcal{B}(G)}(H)\right)^2.\]
From the definition of $\mathcal{B}(G)$, we have $\deg_{\mathcal{B}(G)}((a, b))=1$. As such, 
\[M_{1}(\mathcal{B}(G)) =\sum_{(a, b) \in G \times G}1+\sum_{H \in L(G)}\left(\deg_{\mathcal{B}(G)}(H)\right)^2=|G|^2+\sum_{H \in L(G)}\left(\deg_{\mathcal{B}(G)}(H)\right)^2.\]
Also, 
\begin{align*}
		M_2&(\mathcal{B}(G))= \sum_{uv \in e(\mathcal{B}(G))} \deg(u)\deg(v)\\
		&\quad=\sum_{(a, b)H \in e(\mathcal{B}(G))}\deg_{\mathcal{B}(G)}((a, b))\deg_{\mathcal{B}(G)}(H)
			= \sum_{(a, b)H \in e(\mathcal{B}(G))} \deg_{\mathcal{B}(G)}(H). 
\end{align*}
Since, for each $H \in L(G)$, $\deg_{\mathcal{B}(G)}(H)$ appears $\deg_{\mathcal{B}(G)}(H)$ many times in the above sum we have
\[
M_2(\mathcal{B}(G))= \sum_{H \in L(G)} \left(\deg_{\mathcal{B}(G)}(H)\right)^2=M_1(\mathcal{B}(G))-|G|^2.
\]
Now, $\vert V(\mathcal{B}(G)) \vert =|G|^2+|L(G)|$ and $\vert e(\mathcal{B}(G))\vert =|G|^2$ (see Theorem 3.2 of \cite{DEN-23}). As such,
\begin{align*}
		\frac{M_{2}(\mathcal{B}(G))}{\vert e(\mathcal{B}(G))\vert} - \frac{M_{1}(\mathcal{B}(G))}{\vert V(\mathcal{B}(G)) \vert}&= \frac{M_2(\mathcal{B}(G))}{|G|^2}-\frac{M_{2}(\mathcal{B}(G))+|G|^2}{|G|^2+|L(G)|}= \frac{|L(G)|M_2(\mathcal{B}(G))-|G|^4}{|G|^2(|G|^2+|L(G)|)}.
	\end{align*}
Hence, the result follows noting that $|G|^2(|G|^2+|L(G)|) > 0$.
%= \sum_{(a, b) \in G \times G} 1 + \sum_{H \in L(G)}\left(|H|^2\varphi_2(H)\right)^2 
\end{proof}
%\begin{lemma}\label{condition_for_satisfying_conjecture} 
	%\cite{DEN-24} Let $G$ be a finite group. Then \quad $\frac{M_{2}(\mathcal{B}(G))}{\vert e(\mathcal{B}(G))\vert} \geq \frac{M_{1}(\mathcal{B}(G))}{\vert V(\mathcal{B}(G)) \vert} $ if and only if  
%\end{lemma}
\begin{theorem}\label{cyclic_pq}
If $G$ is a cyclic group of order $pq$, where $p$ and $q$ are two  primes such that $p < q$, then $M_1(\mathcal{B}(G))=p^4q^4-2p^4q^2-2p^2q^4+5p^2q^2+2p^4+2q^4-4p^2-4q^2+4$ and $M_2(\mathcal{B}(G))=p^4q^4-2p^4q^2-2p^2q^4+4p^2q^2+2p^4+2q^4-4p^2-4q^2+4$.	Further, $\frac{M_{2}(\mathcal{B}(G))}{\vert e(\mathcal{B}(G))\vert} > \frac{M_{1}(\mathcal{B}(G))}{\vert V(\mathcal{B}(G)) \vert}$.
\end{theorem}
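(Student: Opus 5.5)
The plan is to read off the degrees of the subgroup vertices from Theorem \ref{structure_cyclic_pq} and substitute them into Lemma \ref{Zagreb_indices_B(G)}. By that theorem, the four subgroups $\{1\}, \langle a^q\rangle, \langle a^p\rangle, \langle a\rangle$ have degrees $1$, $p^2-1$, $q^2-1$ and $(p^2-1)(q^2-1)=p^2q^2-p^2-q^2+1$. Lemma \ref{Zagreb_indices_B(G)} then gives
\[
M_2(\mathcal{B}(G)) = 1+(p^2-1)^2+(q^2-1)^2+(p^2-1)^2(q^2-1)^2 .
\]
Expanding, $(p^2-1)^2(q^2-1)^2 = p^4q^4-2p^4q^2-2p^2q^4+4p^2q^2+p^4+q^4-2p^2-2q^2+1$. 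The remaining terms contribute $p^4+q^4-2p^2-2q^2+3$. Summing the two yields the stated $M_2$. Finally, $M_1 = M_2+|G|^2 = M_2+p^2q^2$, which accounts for the coefficient $5p^2q^2$ in place of $4p^2q^2$.

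For the inequality, Lemma \ref{Zagreb_indices_B(G)} shows it suffices to prove the strict version $|L(G)|M_2(\mathcal{B}(G))-|G|^4>0$, since the difference of the two ratios equals this quantity divided by a positive number. Here $|L(G)|=4$ and $|G|^4=p^4q^4$, so we need $4M_2 > p^4q^4$.

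The cleanest route avoids the expanded polynomial. The factor $(p^2-1)^2(q^2-1)^2$ is itself a summand of $M_2$, and the other summands are positive. So it is enough to show $4(p^2-1)^2(q^2-1)^2 > p^4q^4$, that is, $2(p^2-1)(q^2-1) > p^2q^2$. Since $p\ge 2$ we have $p^2-1\ge \tfrac34 p^2$, and since $q\ge 3$ we have $q^2-1\ge \tfrac89 q^2$. Hence
\[
2(p^2-1)(q^2-1)\ge 2\cdot\tfrac34\cdot\tfrac89\, p^2q^2=\tfrac43 p^2q^2>p^2q^2 .
\]

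The main obstacle is this final positivity check, and the bounding trick settles it without term-by-term polynomial estimates. The only remaining care is to get the expansion right, so that the two displayed formulas match the statement.
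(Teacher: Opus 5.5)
Your proposal is correct. The first half is exactly the paper's argument: you read the degrees $1$, $p^2-1$, $q^2-1$, $(p^2-1)(q^2-1)$ off Theorem~\ref{structure_cyclic_pq} and sum their squares via Lemma~\ref{Zagreb_indices_B(G)}, and your expansion is right.

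The routes differ at the positivity of $4M_2(\mathcal{B}(G))-p^4q^4$. The paper expands this difference completely and regroups it as $p^2q^2\bigl(p^2(3q^2-8)-8q^2\bigr)+16p^2q^2+8p^2(p^2-2)+8q^2(q^2-2)+16$. It then checks that each piece is positive using $p^2\ge 4$ and $q^2\ge 9$. You instead keep only the summand from the top subgroup $\langle a\rangle$ and compare multiplicatively, $2(p^2-1)(q^2-1)\ge\tfrac43p^2q^2$, using the same two facts.

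The paper's computation gives the exact value of $|L(G)|M_2(\mathcal{B}(G))-|G|^4$. However, its grouping is ad hoc and must be found again for each order, as the paper does separately for $p^2q$ and $p^2q^2$. Your bound transfers with no new work. In those cases the top-subgroup degrees are $p^2(p^2-1)(q^2-1)$ and $p^2q^2(p^2-1)(q^2-1)$. Since $(p^2-1)(q^2-1)\ge\tfrac23p^2q^2$ for any two distinct primes, you get $6\cdot\tfrac49p^8q^4>p^8q^4$ and $9\cdot\tfrac49p^8q^8>p^8q^8$.

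More generally, for cyclic $G$ of order $n$ the degree of $G$ itself is $n^2\prod_{r\mid n}(1-r^{-2})>\tfrac{6}{\pi^2}n^2$. So the same one-line estimate proves the Hansen--Vuki\v{c}evi\'c inequality for every cyclic group with at least three subgroups.
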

\begin{proof}
	By Theorem \ref{structure_cyclic_pq}, we have $\mathcal{B}(G)=K_2 \sqcup K_{1, p^2-1} \sqcup K_{1, q^2-1} \sqcup K_{1, p^2q^2-p^2-q^2+1}$. As such $|L(G)| = 4$ and by Lemma \ref{Zagreb_indices_B(G)}, we have
\begin{align*}
M_1(\mathcal{B}(G))&=|G|^2+\sum_{H \in L(G)}\left(\deg_{\mathcal{B}(G)}(H)\right)^2 \\
&=p^4q^4-2p^4q^2-2p^2q^4+5p^2q^2+2p^4+2q^4-4p^2-4q^2+4
\end{align*} 
and
\begin{align*}
M_2(\mathcal{B}(G))&=M_1(\mathcal{B}(G))-|G|^2 \\
&= p^4q^4-2p^4q^2-2p^2q^4+4p^2q^2+2p^4+2q^4-4p^2-4q^2+4.
\end{align*}
Further, 
\begin{align*}
|L(G)|&M_2(\mathcal{B}(G))-|G|^4 \\
&= 4(p^4q^4-2p^4q^2-2p^2q^4+4p^2q^2+2p^4+2q^4-4p^2-4q^2+4)-p^4q^4 \\
&= p^2q^2(p^2(3q^2-8)-8q^2)+16p^2q^2+8p^2(p^2-2)+8q^2(q^2-2)+16. 
\end{align*}
We have $p^2-2>0$ and $q^2-2>0$ for all $p, q$. In fact, $p^2 \geq 4$ and $q^2 \geq 9$.  Therefore,  $3q^2-8 > 2q^2$ and so $p^2(3q^2-8)-8q^2 > 0$.
%
%\noindent \textbf{Case 1.} If $p > q$ then $p^2>2q^2$ and $3q^2-8 \geq 4$ for all $q^2 \geq 4$.
%
%\noindent \textbf{Case 2.} If $p<q$ then $p^2 \geq 4$ and $3q^2-8 > 2q^2$ for all $q^2 \geq 9$.
%
  Thus,  $|L(G)|M_2(\mathcal{B}(G))-|G|^4 > 0$ and hence the result follows from Lemma \ref{Zagreb_indices_B(G)}.
\end{proof}
\begin{theorem}\label{cyclic_p^2q}
    If $G$ is a cyclic group of order $p^2q$, where $p$, $q$ are two distinct primes, then $M_1(\mathcal{B}(G))=p^8q^4+2p^4q^4+4p^6q^2-2p^6q^4+2p^8+4p^4-2p^8q^2-3p^4q^2-4p^6-2p^2q^4+4p^2q^2+2q^4-4p^2-4q^2+4$ and $M_2(\mathcal{B}(G))=p^8q^4+2p^4q^4+4p^6q^2-2p^6q^4+2p^8+4p^4-2p^8q^2-4p^4q^2-4p^6-2p^2q^4+4p^2q^2+2q^4-4p^2-4q^2+4$. Further, $\frac{M_{2}(\mathcal{B}(G))}{\vert e(\mathcal{B}(G))\vert} > \frac{M_{1}(\mathcal{B}(G))}{\vert V(\mathcal{B}(G)) \vert}$.
\end{theorem}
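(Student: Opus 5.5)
We follow the same pattern as the proof of Theorem \ref{cyclic_pq}. By Theorem \ref{structure_cyclic_p^2q}, $\mathcal{B}(G)$ is a disjoint union of six stars, one for each subgroup, so $|L(G)|=6$. The degrees of the subgroup vertices are
\[1,\quad p^2-1,\quad p^4-p^2,\quad q^2-1,\quad (p^2-1)(q^2-1),\quad p^2(p^2-1)(q^2-1).\]
Lemma \ref{Zagreb_indices_B(G)} then gives
\[M_2(\mathcal{B}(G))=1+(p^2-1)^2+p^4(p^2-1)^2+(q^2-1)^2+(p^2-1)^2(q^2-1)^2\bigl(1+p^4\bigr)\]
and $M_1(\mathcal{B}(G))=M_2(\mathcal{B}(G))+p^4q^2$. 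Expanding these squares should reproduce the stated polynomials; this is routine bookkeeping. A quick consistency check is that the two expressions differ only in the $p^4q^2$ coefficient ($-3$ versus $-4$).

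For the inequality, Lemma \ref{Zagreb_indices_B(G)} reduces the claim to showing
\[6M_2(\mathcal{B}(G))-p^8q^4>0.\]
It is easier to work with the factored form than with the expanded polynomial. Set $x=p^2-1\ge 3$ and $y=q^2-1\ge 3$. Since $p$ and $q$ are distinct, one of them is at least $3$, so one of $x,y$ is at least $8$.

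Keep only the dominant term $6(p^2-1)^2(q^2-1)^2p^4$ of $6M_2$. It then suffices to prove
\[6(p^2-1)^2(q^2-1)^2 \ge p^4q^4,\]
which is the same as $\sqrt6\,(1-1/p^2)(1-1/q^2)\ge 1$. For primes, $(1-1/p^2)(1-1/q^2)\ge (3/4)(8/9)=2/3$, and $\sqrt6\cdot 2/3\approx1.63>1$. Hence the dominant term alone already exceeds $p^8q^4$. All the remaining terms of $6M_2$ are positive, so $6M_2-p^8q^4>0$ strictly, and the result follows from Lemma \ref{Zagreb_indices_B(G)}.

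The main obstacle is avoiding messy sign analysis of the expanded polynomial, where the paper's style would group terms as in Theorem \ref{cyclic_pq}. The factored bound above sidesteps this. If one wants to match the expanded form instead, I would group terms as
\[p^4q^4(5p^4-12p^2+6)+\cdots\]
and check positivity using $p^2\ge4$ and $q^2\ge4$, with distinctness handling the smallest case.
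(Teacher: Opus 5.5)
Your proposal is correct. The computation of $M_1$ and $M_2$ matches the paper's: the structure theorem plus Lemma~\ref{Zagreb_indices_B(G)}. Your factored degrees do expand to exactly the stated polynomials.

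The difference is in the inequality. The paper expands $6M_2(\mathcal{B}(G))-p^8q^4$ and regroups it as $p^6q^2(5p^2q^2-12q^2-12p^2)$ plus terms that are visibly positive. It then treats $p>q$ and $p<q$ separately to show that this bracket is positive. You instead keep only the star centred at $G$ and write its degree as $p^2(p^2-1)(q^2-1)=|G|^2(1-1/p^2)(1-1/q^2)$. That reduces everything to $\sqrt6\,(1-1/p^2)(1-1/q^2)\ge 1$.

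Your route has several advantages:
\begin{enumerate}
\item It needs no expansion and no case split. It does not even use $p\neq q$, since $\sqrt6\,(3/4)^2>1$.
\item It is more robust. The paper's displayed regrouping of $6M_2-|G|^4$ actually omits a $+24p^4$ term, which is harmless only because that term is positive.
\item It generalizes at once. For a cyclic group of order $n$, the vertex $G$ has degree $n^2\prod_{r\mid n}(1-1/r^2)>6n^2/\pi^2$, and $|L(G)|$ is the number of divisors of $n$. So the same bound settles the inequality whenever $n$ has at least three divisors.
\end{enumerate}

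What the paper's approach gives in exchange is an explicit expression for the gap $|L(G)|M_2-|G|^4$, consistent with the expanded style of the rest of that section.
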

\begin{proof}
    By Theorem \ref{structure_cyclic_p^2q}, we have
   \[  
     \mathcal{B}(G)=K_2 \sqcup K_{1, p^2-1} \sqcup K_{1, p^4-p^2} \sqcup K_{1, q^2-1} \sqcup K_{1, p^2q^2-p^2-q^2+1} \sqcup K_{1, p^4q^2-p^2q^2-p^4+p^2}.
     \] 
     As such $|L(G)| = 6$ and by Lemma \ref{Zagreb_indices_B(G)}, we have
\begin{align*}
M_1(\mathcal{B}(G))&=|G|^2+\sum_{H \in L(G)}\left(\deg_{\mathcal{B}(G)}(H)\right)^2 \\
&=p^8q^4+2p^4q^4+4p^6q^2-2p^6q^4+2p^8+4p^4-2p^8q^2-3p^4q^2-4p^6-2p^2q^4 \\ & \qquad \qquad \qquad \qquad \qquad \qquad \qquad \qquad \qquad +4p^2q^2+2q^4-4p^2-4q^2+4
\end{align*}
and
\begin{align*}
M_2(\mathcal{B}(G))&=M_1(\mathcal{B}(G))-|G|^2 \\
&=p^8q^4+2p^4q^4+4p^6q^2-2p^6q^4+2p^8+4p^4-2p^8q^2-4p^4q^2-4p^6-2p^2q^4 \\
& \qquad \qquad \qquad \qquad \qquad \qquad \qquad \qquad \qquad \, \, \, +4p^2q^2+2q^4-4p^2-4q^2+4.
\end{align*}
Further, 
\begin{align*}
	&|L(G)|M_2(\mathcal{B}(G))-|G|^4 \\
    &= p^6q^2(5p^2q^2-12q^2-12p^2)+12p^2q^2(2p^4-2p^2)+12p^2q^4(p^2-1)+12p^6(p^2-2) \\
    & \qquad \qquad \qquad \qquad \qquad \qquad \qquad \qquad+24p^2(q^2-1)+12q^2(q^2-2)+24. 
	\end{align*}
We have $p^2-1$, $q^2-1$, $p^2-2$, $q^2-2$ and $2p^4-2p^2$ are all positive for all $p, q$. If $p > q$ then  $p^2 \geq 9$ and so $5p^2-12 > 3p^2$. Therefore, $q^2(5p^2-12) > 3q^2p^2 > 12p^2$ since  $q^2 \geq 4$ and so $5p^2q^2-12q^2-12p^2 = q^2(5p^2-12) - 12p^2 > 0$. If $p < q$ then  $q^2 \geq 9$ and so $5q^2-12 > 3q^2$. Therefore, $p^2(5q^2-12) > 3q^2p^2 > 12q^2$ since  $p^2 \geq 4$ and so $5p^2q^2-12q^2-12p^2 = p^2(5q^2-12) - 12q^2 > 0$. 
%\noindent \textbf{Case 1.} If $p > q$ then $q^2 \geq 4$ and $5p^2-12 > 3p^2$ for all $p^2 \geq 9$.
%\noindent \textbf{Case 2.} If $p<q$ then $q^2>2p^2$ and $5p^2-12 >6$ for all $p^2 \geq 4$.
 Thus,  $|L(G)|M_2(\mathcal{B}(G))-|G|^4 > 0$ and hence the result follows from Lemma \ref{Zagreb_indices_B(G)}.
\end{proof}
\begin{theorem}\label{cyclic_p^2q^2}
    If $G$ is a cyclic group of order $p^2q^2$, where $p$ and $q$ are two distinct primes such that $p < q$, then $M_1(\mathcal{B}(G))=p^8q^8-2p^6q^8-2p^8q^6-4p^4q^6-4p^6q^4-2p^2q^8-2p^8q^2+2p^4q^8+2p^8q^4+4p^6q^6+5p^4q^4+4p^2q^6+4p^6q^2-4p^2q^4-4p^4q^2+4p^2q^2+2q^8+2p^8-4q^6-4p^6+4q^4+4p^4-4p^2-4q^2+4$ and $M_2(\mathcal{B}(G))=p^8q^8-2p^6q^8-2p^8q^6-4p^4q^6-4p^6q^4-2p^2q^8-2p^8q^2+2p^4q^8+2p^8q^4+4p^6q^6+4p^4q^4+4p^2q^6+4p^6q^2-4p^2q^4-4p^4q^2+4p^2q^2+2q^8+2p^8-4q^6-4p^6+4q^4+4p^4-4p^2-4q^2+4$. Further, $\frac{M_{2}(\mathcal{B}(G))}{\vert e(\mathcal{B}(G))\vert} > \frac{M_{1}(\mathcal{B}(G))}{\vert V(\mathcal{B}(G)) \vert}$.
\end{theorem}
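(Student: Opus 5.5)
The plan is to use Lemma \ref{Zagreb_indices_B(G)} as in Theorems \ref{cyclic_pq} and \ref{cyclic_p^2q}. The key observation is that the subgroup degrees from Theorem \ref{structure_cyclic_p^2q^2} factor multiplicatively. Reading off the star sizes, the nine values $\deg_{\mathcal{B}(G)}(H)$ are
\[
1,\; p^2-1,\; p^2(p^2-1),\; q^2-1,\; q^2(q^2-1),\; (p^2-1)(q^2-1),\; (p^2-1)q^2(q^2-1),\; p^2(p^2-1)(q^2-1),\; p^2(p^2-1)q^2(q^2-1).
\]
For instance, $p^2q^4-p^2q^2-q^4+q^2=(p^2-1)(q^4-q^2)$. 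These are exactly the products $f_i(p)f_j(q)$ with $i,j\in\{0,1,2\}$, where $f_0(x)=1$, $f_1(x)=x^2-1$ and $f_2(x)=x^2(x^2-1)$. This reflects the fact that $G\cong C_{p^2}\times C_{q^2}$ and that the subgroup lattice is a product.

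Hence
\[
\sum_{H\in L(G)}\left(\deg_{\mathcal{B}(G)}(H)\right)^2 = A(p)A(q), \qquad A(x)=1+(x^2-1)^2+x^4(x^2-1)^2 .
\]
By Lemma \ref{Zagreb_indices_B(G)}, this gives $M_2(\mathcal{B}(G))=A(p)A(q)$ and $M_1(\mathcal{B}(G))=A(p)A(q)+p^4q^4$. We have $A(x)=x^8-2x^6+2x^4-2x^2+2$. Expanding $A(p)A(q)$ term by term should then reproduce the stated polynomials; the only difference between $M_1$ and $M_2$ is the coefficient $5$ versus $4$ of $p^4q^4$. This expansion is routine bookkeeping, and I would present it via the factored form rather than term by term.

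For the inequality, Lemma \ref{Zagreb_indices_B(G)} reduces the claim to $|L(G)|M_2(\mathcal{B}(G))-|G|^4>0$. Since $|L(G)|=9$, this becomes $9A(p)A(q)>p^8q^8$. Because both sides factor, it suffices to show $3A(x)>x^8$ for every prime $x$, and then multiply the two positive inequalities.

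For $x\ge 2$ we have $x^2-1\ge \tfrac34x^2$, so $(x^2-1)^2\ge\tfrac{9}{16}x^4$. Therefore
\[
A(x)>x^4(x^2-1)^2\ge\tfrac{9}{16}x^8,
\]
and so $3A(x)>\tfrac{27}{16}x^8>x^8$. Alternatively, $3A(x)-x^8=2x^8-6x^6+6x^4-6x^2+6$, and $2x^8-6x^6=2x^6(x^2-3)>0$ for $x\ge2$, while $6x^4-6x^2>0$, which gives the same conclusion.

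The main obstacle is only the careful verification that the expanded form of $A(p)A(q)$ matches the long polynomial in the statement. The factorization makes this check mechanical, and it also makes the Hansen--Vuki\v{c}evi\'c comparison essentially immediate.
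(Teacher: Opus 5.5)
Your proof is correct, and it handles the algebra differently from the paper. The paper also reads the nine degrees off Theorem~\ref{structure_cyclic_p^2q^2} and invokes Lemma~\ref{Zagreb_indices_B(G)}. However, it simply records the expanded polynomials for $M_1$ and $M_2$. It then proves $9M_2(\mathcal{B}(G))-p^8q^8>0$ by regrouping this difference by hand into ten pieces, $2p^6q^6(4p^2q^2-9q^2-9p^2+18)+18p^2q^6(p^2q^2-q^2-2p^2+2)+18p^6q^2(p^2q^2-p^2-2q^2+2)+36p^2q^2(p^2q^2-p^2-q^2)+\cdots+36$, and checks each bracket separately using $p^2\ge 4$, $q^2\ge 9$ and $q>p$.

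Your observation that $\deg_{\mathcal{B}(G)}(H)=f_i(p)f_j(q)$ replaces all of this. It gives the closed form $M_2(\mathcal{B}(G))=A(p)A(q)$. The $25$ monomials of $A(p)A(q)$ are exactly those of the stated polynomial, so the bookkeeping you deferred does go through, with $4p^4q^4$ becoming $5p^4q^4$ in $M_1$. It also reduces the Hansen--Vuki\v{c}evi\'c inequality to the single-variable bound $3A(x)>x^8$, with no case split and no use of $p<q$.

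Your route also explains why the inequality holds. The bound $3A(x)>x^8$ is exactly the criterion of Lemma~\ref{Zagreb_indices_B(G)} for a cyclic group of order $x^2$. Moreover, $|L(G)|$, $M_2(\mathcal{B}(G))$ and $|G|^4$ are all multiplicative over coprime cyclic factors. So the same argument reduces the conjecture for $\mathcal{B}(G)$, with $G$ any finite cyclic group, to the prime-power case.

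The paper's approach needs no structural observation, only the expanded polynomial. The cost is that it is ad hoc and must be redone from scratch for each new order.
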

\begin{proof}
    By Theorem \ref{structure_cyclic_p^2q^2}, we have $\mathcal{B}(G)=K_2 \sqcup K_{1, p^2-1} \sqcup K_{1, p^4-p^2} \sqcup K_{1, q^2-1} \sqcup K_{1, q^4-q^2} \sqcup K_{1, p^2q^2-p^2-q^2+1} \sqcup K_{1, p^2q^4-p^2q^2-q^4+q^2} \sqcup K_{1, p^4q^2-p^2q^2-p^4+p^2} \sqcup K_{1, p^4q^4-p^2q^4-p^4q^2+p^2q^2}$. As such $|L(G)| = 9$ and by Lemma \ref{Zagreb_indices_B(G)}, we have
\begin{align*}
& \qquad M_1(\mathcal{B}(G))=|G|^2+\sum_{H \in L(G)}\left(\deg_{\mathcal{B}(G)}(H)\right)^2 \\
&=p^8q^8-2p^6q^8-2p^8q^6-4p^4q^6-4p^6q^4-2p^2q^8-2p^8q^2+2p^4q^8+2p^8q^4+4p^6q^6\\
& \qquad \qquad  +5p^4q^4+4p^2q^6+4p^6q^2-4p^2q^4-4p^4q^2+4p^2q^2+2q^8+2p^8-4q^6-4p^6 \\
& \qquad \qquad \qquad \qquad \qquad \qquad \qquad \qquad \qquad \qquad \qquad \quad +4q^4+4p^4-4p^2-4q^2+4
\end{align*}
and
\begin{align*}
& \qquad M_2(\mathcal{B}(G))=M_1(\mathcal{B}(G))-|G|^2 \\
&=p^8q^8-2p^6q^8-2p^8q^6-4p^4q^6-4p^6q^4-2p^2q^8-2p^8q^2+2p^4q^8+2p^8q^4+4p^6q^6+4p^4q^4 \\
& \qquad \qquad \qquad \qquad \quad +4p^2q^6+4p^6q^2-4p^2q^4-4p^4q^2+4p^2q^2+2q^8+2p^8-4q^6-4p^6 \\
& \qquad \qquad \qquad \qquad \qquad \qquad \qquad \qquad \qquad \qquad \qquad \qquad +4q^4+4p^4-4p^2-4q^2+4.
\end{align*}
Further, 
	\begin{align*}
		&|L(G)|M_2(\mathcal{B}(G))-|G|^4 \\
            &= 2p^6q^6(4p^2q^2-9q^2-9p^2+18)+18p^2q^6(p^2q^2-q^2-2p^2+2) \\
            & \qquad+18p^6q^2(p^2q^2-p^2-2q^2+2)+36p^2q^2(p^2q^2-p^2-q^2)+36p^2q^2+18q^6(q^2-2) \\
            & \qquad \qquad \qquad \qquad \qquad \qquad \qquad +18p^6(p^2-2)+36q^2(q^2-1)+36p^2(p^2-1)+36. 
	\end{align*}
We have $p^2-1$, $q^2-1$, $p^2-2$ and $q^2-2$  are all positive for all $p, q$. If $p < q$ then $q^2 \geq 9$ and so $4q^2 - 9 \geq 3q^2$. Therefore, $p^2(4q^2 - 9) \geq 3p^2q^2 > 12q^2$ since $p^2 \geq 4$ and so $4p^2q^2-9q^2-9p^2 + 18= p^2(4q^2 - 9) - 9q^2 + 18 \geq 12q^2 - 9q^2 + 18 = 3q^2 + 18> 0$. We have $p^2q^2 - p^2 - 2q^2 = q^2(p^2 - 2) - p^2 > 0$ since $q^2 > p^2$ and $p^2 - 2 > 1$. Therefore, $p^2q^2 - p^2 - 2q^2 + 2 > 0$ and $p^2q^2 - p^2 - q^2 > p^2q^2 - p^2 - 2q^2 > 0$. Also, 
$p^2q^2 -  q^2 - 2p^2  = q^2(p^2 - 1) - 2p^2 > 0$ since $q^2 > p^2$ and $p^2 - 1 > 2$. Therefore, $p^2q^2 - q^2 - 2p^2 + 2 > 0$.  
% We have $p^2, q^2>2$ for all $p, q$.
%
%\noindent \textbf{Case 1.} If $p > q$ then $p^2 > 2q^2$ and $4q^2-9 > 5$ for all $q^2 \geq 4$.
%
%\noindent \textbf{Case 2.} If $p<q$ then $p^2 \geq 4$, $q^2-1 > \frac{q^2}{4}$ and $4q^2-9 \geq 3q^2$ for all $q^2 \geq 9$.
%
 Thus,  $|L(G)|M_2(\mathcal{B}(G))-|G|^4 > 0$ and hence the result follows from Lemma \ref{Zagreb_indices_B(G)}.
\end{proof}
In view of Theorems \ref{cyclic_pq}--\ref{cyclic_p^2q^2}, we conclude that 
 $\mathcal{B}(G)$ satisfies Hansen-Vuki{\v{c}}evi{\'c} conjecture if $G$ is isomorphic to a cyclic group of order $pq, p^2q$ and $p^2q^2$ for any two distinct  primes $p$ and $q$. 
%It may be interesting to conclude that $\mathcal{B}(G)$ satisfies Hansen-Vuki{\v{c}}evi{\'c} conjecture  for any finite group $G$.

\section{Other topological indices}
Soon after the introduction of Zagreb indices, various other degree-based topological indices were also introduced and studied. Randic Connectivity index, Atom-Bond Connectivity index, Geometric-Arithmetic index, Harmonic index, Sum-Connectivity index etc. are some of the popular ones. The Randic Connectivity index $R(\mathcal{G})$, Atom-Bond Connectivity index $\ABC(\mathcal{G})$, Geometric-Arithmetic index $\GA(\mathcal{G})$, Harmonic index $H(\mathcal{G})$ and Sum-Connectivity index  $\SCI(\mathcal{G})$ of $\mathcal{G}$  are defined as
\[
R(\mathcal{G})=\sum_{uv \in e(\mathcal{G})}\left(\deg(u)\deg(v)\right)^{\frac{-1}{2}}, \quad \ABC(\mathcal{G})=\sum_{uv\in e(\mathcal{G})}\left(\frac{\deg(u)+\deg(v)-2}{\deg(u)\deg(v)}\right)^{\frac{1}{2}}, 
\]
\[
\GA(\mathcal{G})=\sum_{uv\in e(\mathcal{G})} \frac{\sqrt{\deg(u)\deg(v)}}{\frac{1}{2}(\deg(u)+\deg(v))}, \quad H(\mathcal{G})=\sum_{uv\in e(\mathcal{G})}\frac{2}{\deg(u)+\deg(v)} 
\]
and
\[
\SCI(\mathcal{G})=\sum_{uv \in e(\mathcal{G})} \left(\deg(u)+\deg(v)\right)^{\frac{-1}{2}}.
\]
In this section, we obtain these topological indices of $\mathcal{B}(G)$ for the groups considered in Section 2. The following result is useful in our computations.
%We use the following result :
\begin{lemma}\label{other_topological_index}
\cite[Lemma 4.1]{DEN-24} For any finite group $G$ we have 
  
\noindent  $R(\mathcal{B}(G))=\!\!\sum\limits_{H \in L(G)}\left(\deg_{\mathcal{B}(G)}(H)\right)^{\frac{1}{2}}$,
$\ABC(\mathcal{B}(G))=\!\!\sum\limits_{H \in L(G)}\left(\left(\deg_{\mathcal{B}(G)}(H)\right)^2-\deg_{\mathcal{B}(G)}(H)\right)^{\frac{1}{2}}$,
		 
$\GA(\mathcal{B}(G))=\sum\limits_{H \in L(G)} \frac{2\left(\deg_{\mathcal{B}(G)}(H)\right)^{\frac{3}{2}}}{(1+\deg_{\mathcal{B}(G)}(H))}$, \qquad
		 $H(\mathcal{B}(G)=\sum\limits_{H \in L(G)}\frac{2\deg_{\mathcal{B}(G)}(H)}{1+\deg_{\mathcal{B}(G)}(H)}$
		 
and	\quad	 $\SCI(\mathcal{B}(G))=\sum\limits_{H \in L(G)} \left(1+\deg_{\mathcal{B}(G)}(H)\right)^{\frac{-1}{2}}\deg_{\mathcal{B}(G)}(H)$.
\end{lemma}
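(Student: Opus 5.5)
The plan is to reduce every index to a sum over the subgroup vertices, using the observation already made in the proof of Lemma \ref{Zagreb_indices_B(G)}. Every vertex $(a,b)\in G\times G$ has degree exactly $1$, because $\langle a,b\rangle$ is a single, uniquely determined subgroup. Hence every edge of $\mathcal{B}(G)$ has the form $(a,b)H$ with $\deg((a,b))=1$ and $\deg(H)=\deg_{\mathcal{B}(G)}(H)$, which we abbreviate $d_H$. Moreover, for each fixed $H\in L(G)$ there are exactly $d_H$ such edges. So for any edge function $f(\deg u,\deg v)$ that is symmetric in its arguments,
\[
\sum_{uv\in e(\mathcal{B}(G))} f(\deg u,\deg v)=\sum_{H\in L(G)} d_H\, f(1,d_H).
\]
Subgroups with $d_H=0$ contribute nothing, so they cause no trouble.

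Next I would substitute each defining formula in turn.

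For the Randić index, $f(1,d)=d^{-1/2}$, so each $H$ contributes $d\cdot d^{-1/2}=d^{1/2}$.

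For the ABC index, $f(1,d)=\bigl((1+d-2)/d\bigr)^{1/2}=\bigl((d-1)/d\bigr)^{1/2}$. Multiplying by $d$ gives $d\bigl((d-1)/d\bigr)^{1/2}=(d^2-d)^{1/2}$, using $d\ge 0$.

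For the GA index, $f(1,d)=\sqrt{d}\big/\tfrac12(1+d)$, so each $H$ contributes $2d^{3/2}/(1+d)$.

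For the harmonic index, $f(1,d)=2/(1+d)$, so each $H$ contributes $2d/(1+d)$.

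For the sum-connectivity index, $f(1,d)=(1+d)^{-1/2}$, so each $H$ contributes $d(1+d)^{-1/2}$.

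Summing these contributions over $H\in L(G)$ gives the five stated formulas.

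There is no real obstacle here. The only step needing care is the regrouping of the edge sum by the subgroup endpoint, which is justified because $\mathcal{B}(G)$ is bipartite and each pair lies in exactly one edge. The only other point is the algebraic simplification of the ABC term, which needs $d_H\ge 1$ (or the trivial case $d_H=0$) to take square roots legitimately.
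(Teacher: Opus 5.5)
Your proof is correct: every $(a,b)$ has degree $1$ and each $H$ is the endpoint of exactly $\deg_{\mathcal{B}(G)}(H)$ edges, so regrouping each edge sum by its subgroup endpoint gives all five formulas, and your treatment of subgroups of degree $0$ and of the ABC simplification is sound. This paper only quotes the statement from \cite{DEN-24}, but your argument is the same regrouping it uses to obtain $M_2(\mathcal{B}(G))$ in the proof of Lemma~\ref{Zagreb_indices_B(G)}.
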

In the computation of Zagreb indices of $\mathcal{B}(G)$ for the cyclic groups of order $pq$, $p^2q$ and $p^2q^2$  we have computed $\deg_{\mathcal{B}(G)}(H)$ for all $H \in L(G)$ which can be seen in the proofs of Theorem \ref{cyclic_pq} -- Theorem \ref{cyclic_p^2q^2}. 
%Using those degrees of $H$ in $\mathcal{B}(G)$  and 
Therefore, by Lemma \ref{other_topological_index} we get the following theorems.
\begin{theorem}
If $G$ is a cyclic group of order $pq$, where $p$ and $q$ are two  primes such that $p < q$, then 
 
\begin{center}
$R(\mathcal{B}(G))=1+\sqrt{p^2-1}(1+\sqrt{q^2-1})+\sqrt{q^2-1}$, 

$\ABC(\mathcal{B}(G))=\sqrt{p^2-1}(\sqrt{p^2-2}+\sqrt{(q^2-1)(p^2q^2-p^2-q^2)})+\sqrt{(q^2-1)(q^2-2)}$,
	 
	  $\GA(\mathcal{B}(G))=1+2\sqrt{(p^2-1)^3}\left(\frac{1}{p^2}+\frac{\sqrt{(q^2-1)^3}}{p^2q^2-p^2-q^2+2}\right)+\frac{2\sqrt{(q^2-1)^3}}{q^2}$,

$H(\mathcal{B}(G))=1+2(p^2-1)\left(\frac{1}{p^2}+\frac{q^2-1}{p^2q^2-p^2-q^2+2}\right)+\frac{2(q^2-1)}{q^2}$
\text{ and }
$\SCI(\mathcal{B}(G))=\frac{1}{\sqrt{2}}+(p^2-1)\left(\frac{1}{p}+\frac{q^2-1}{\sqrt{p^2q^2-p^2-q^2+2}}\right)+\frac{q^2-1}{q}$.  
   \end{center} 
\end{theorem}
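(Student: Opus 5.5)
The plan is to substitute the four subgroup degrees of $\mathcal{B}(G)$ into the formulas of Lemma \ref{other_topological_index}, term by term. By Theorem \ref{structure_cyclic_pq}, $\mathcal{B}(G)=K_2 \sqcup K_{1, p^2-1} \sqcup K_{1, q^2-1} \sqcup K_{1, p^2q^2-p^2-q^2+1}$, and each component is the star centred at a subgroup. So the multiset $\{\deg_{\mathcal{B}(G)}(H) : H \in L(G)\}$ is
\[
\{1,\; p^2-1,\; q^2-1,\; p^2q^2-p^2-q^2+1\}.
\]
Here the value $1$ comes from $H=\{1\}$, whose only neighbour is $(1,1)$.

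The key simplification, to be used throughout, is the factorization
\[
p^2q^2-p^2-q^2+1=(p^2-1)(q^2-1).
\]
It lets every term coming from $H=G$ be written through the factors $\sqrt{p^2-1}$ and $\sqrt{q^2-1}$, which is the form in which the statement is written.

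I would then evaluate each index as a sum of four contributions and regroup.
\begin{enumerate}[(i)]
\item For $R$, the sum $\sum_H \deg(H)^{1/2}$ equals $1+\sqrt{p^2-1}+\sqrt{q^2-1}+\sqrt{p^2-1}\sqrt{q^2-1}$. Collecting the two terms containing $\sqrt{p^2-1}$ gives the stated expression.
\item For $\ABC$, write $\deg^2-\deg=\deg(\deg-1)$. Then $\{1\}$ contributes $0$, and the two prime-order subgroups contribute $\sqrt{(p^2-1)(p^2-2)}$ and $\sqrt{(q^2-1)(q^2-2)}$. The subgroup $G$ contributes $\sqrt{(p^2-1)(q^2-1)(p^2q^2-p^2-q^2)}$. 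Factoring out $\sqrt{p^2-1}$ yields the formula.
\item For $\GA$, $H$ and $\SCI$, the common denominator is $1+\deg(H)$. This equals $2$, $p^2$, $q^2$ and $p^2q^2-p^2-q^2+2$ for the four subgroups respectively. Thus $\{1\}$ contributes $1$, $1$ and $1/\sqrt{2}$ to these three indices.
\item For $\SCI$, the terms $\deg(H)/\sqrt{1+\deg(H)}$ simplify to $(p^2-1)/p$ and $(q^2-1)/q$ for the prime-order subgroups, since $\sqrt{p^2}=p$.
\end{enumerate}

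No step is conceptually difficult. The only point needing care is bookkeeping: the regrouping, in which the $p^2-1$ factor is pulled out of both the order-$p$ term and the $G$ term, must match the printed form exactly. I would verify this by expanding the printed expressions back out and comparing them with the four-term sums above.
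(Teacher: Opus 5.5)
Your proposal is correct and follows the paper's route. The paper likewise takes the four subgroup degrees $1$, $p^2-1$, $q^2-1$ and $(p^2-1)(q^2-1)$ from the structure theorem (as already recorded in the Zagreb computation) and substitutes them into Lemma~\ref{other_topological_index}; your term-by-term contributions and the regrouping via $p^2q^2-p^2-q^2+1=(p^2-1)(q^2-1)$ agree with each printed formula.
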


\begin{theorem}
If $G$ is a cyclic group of order $p^2q$, where $p$ and $q$ are two distinct primes, then 
 
\begin{center}
$R(\mathcal{B}(G))=1+\sqrt{p^2-1}(p+1)(1+\sqrt{q^2-1})+\sqrt{q^2-1}$, 

$\ABC(\mathcal{B}(G))=\sqrt{p^2-1}(\sqrt{p^2-2}+p\sqrt{p^4-p^2-1}+\sqrt{(q^2-1)(p^2q^2-p^2-q^2)}+p\sqrt{(q^2-1)(p^4q^2-p^2q^2-p^4+p^2-1}))+\sqrt{(q^2-1)(q^2-2)}$,
 \end{center} 	 
{\small{ 	  $\GA(\mathcal{B}(G))=1+2\sqrt{(p^2-1)^3}\left(\frac{1}{p^2}+\frac{p^3}{p^4-p^2+1}+\frac{\sqrt{(q^2-1)^3}}{p^2q^2-p^2-q^2+2}+\frac{p^3\sqrt{(q^2-1)^3}}{p^4q^2-p^2q^2-p^4+p^2+1}\right)+\frac{2\sqrt{(q^2-1)^3}}{q^2}$}},
\begin{center}
$H(\mathcal{B}(G))=1+2(p^2-1)\left(\frac{1}{p^2}+\frac{p^2}{p^4-p^2+1}+\frac{q^2-1}{p^2q^2-p^2-q^2+2}+\frac{p^2(q^2-1)}{p^4q^2-p^2q^2-p^4+p^2+1}\right)+\frac{2(q^2-1)}{q^2}$
\text{ and }
$\SCI(\mathcal{B}(G))=\frac{1}{\sqrt{2}}+(p^2-1)\left(\frac{1}{p}+\frac{p^2}{\sqrt{p^4-p^2+1}}+\frac{q^2-1}{\sqrt{p^2q^2-p^2-q^2+2}}+\frac{p^2(q^2-1)}{p^4q^2-p^2q^2-p^4+p^2+2}\right)+\frac{q^2-1}{q}$.  
   \end{center} 
\end{theorem}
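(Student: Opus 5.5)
The plan is to substitute directly into Lemma \ref{other_topological_index}, so everything reduces to bookkeeping with the six star degrees. By Theorem \ref{structure_cyclic_p^2q}, $\mathcal{B}(G)$ is a disjoint union of six stars, one centred at each $H \in L(G)$. First I will list $\deg_{\mathcal{B}(G)}(H)$ for the six subgroups in factored form, since every simplification depends on it:
\[
d_1 = 1,\quad d_2 = p^2-1,\quad d_3 = p^2(p^2-1),\quad d_4 = q^2-1,\quad d_5=(p^2-1)(q^2-1),\quad d_6 = p^2(p^2-1)(q^2-1).
\]
Here $d_1$ belongs to $\{1\}$ (the $K_2$ component), and $d_2,\dots,d_6$ belong to the subgroups of orders $p, p^2, q, pq, p^2q$. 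These factorizations follow by regrouping the exponents in Theorem \ref{structure_cyclic_p^2q}; for instance $p^4q^2-p^2q^2-p^4+p^2 = p^2(p^2-1)(q^2-1)$.

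Next I will evaluate each index as a sum of six terms, one per $d_i$, and factor out $\sqrt{p^2-1}$ or $(p^2-1)$ from the four degrees divisible by $p^2-1$.

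\textbf{Randic index.} The terms $\sqrt{d_i}$ give $1$, $\sqrt{p^2-1}$, $p\sqrt{p^2-1}$, $\sqrt{q^2-1}$, $\sqrt{p^2-1}\sqrt{q^2-1}$ and $p\sqrt{p^2-1}\sqrt{q^2-1}$. These collect to $1+\sqrt{p^2-1}\,(p+1)(1+\sqrt{q^2-1})+\sqrt{q^2-1}$.

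\textbf{ABC index.} I will use $\sqrt{d^2-d}=\sqrt{d(d-1)}$. The first term vanishes, and the remaining terms give, for example, $\sqrt{d_3(d_3-1)} = p\sqrt{p^2-1}\sqrt{p^4-p^2-1}$ and $\sqrt{d_5(d_5-1)}=\sqrt{p^2-1}\sqrt{(q^2-1)(p^2q^2-p^2-q^2)}$.

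\textbf{GA, harmonic and sum-connectivity indices.} For these I will write $1+d_i$ explicitly: $2$, $p^2$, $p^4-p^2+1$, $q^2$, $p^2q^2-p^2-q^2+2$ and $p^4q^2-p^2q^2-p^4+p^2+1$. The numerators are $2d_i^{3/2}$, $2d_i$ and $d_i$ respectively, and $d_i^{3/2}$ factors as $(p^2-1)^{3/2}$ times $1$, $p^3$, $(q^2-1)^{3/2}$ or $p^3(q^2-1)^{3/2}$. For SCI the simplifications $\sqrt{1+d_2}=p$ and $\sqrt{1+d_4}=q$ produce the terms $(p^2-1)/p$ and $(q^2-1)/q$.

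The only real obstacle is keeping the algebra consistent term by term. In the final SCI summand I expect to get $\frac{p^2(q^2-1)}{\sqrt{p^4q^2-p^2q^2-p^4+p^2+1}}$. So I will double-check that summand against the displayed expression, which appears to lack the square root and to have $+2$ in place of $+1$. Apart from that the proof is mechanical, and I would present it as the degree list followed by the five substitutions.
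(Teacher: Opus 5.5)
Your proposal is correct and follows the paper's own route: substitute the six centre degrees from Theorem~\ref{structure_cyclic_p^2q} into Lemma~\ref{other_topological_index}. Your factored degrees $d_1,\dots,d_6$ check out for all five indices, and your suspicion about the last SCI summand is justified: the lemma gives $\frac{p^2(q^2-1)}{\sqrt{p^4q^2-p^2q^2-p^4+p^2+1}}$ there, so the displayed $\frac{p^2(q^2-1)}{p^4q^2-p^2q^2-p^4+p^2+2}$ is a typo in the statement.
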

\begin{theorem}
If $G$ is a cyclic group of order $p^2q^2$, where $p$ and $q$ are two  primes such that $p<q$, then 
 
\begin{align*}
    R(\mathcal{B}(G))=1+\sqrt{p^2-1}(p+1+\sqrt{q^2-1}(1+p+q+pq))+\sqrt{q^2-1}(q+1),
\end{align*}
\begin{align*}
    \ABC(\mathcal{B}(G))&=\sqrt{p^2-1}\bigg(\sqrt{p^2-2}+p\sqrt{p^4-p^2-1}+\sqrt{(q^2-1)(p^2q^2-p^2-q^2)} \\
    & \qquad \qquad \qquad \qquad \qquad +q\sqrt{(q^2-1)(p^2q^4-p^2q^2-q^4+q^2-1)} \\
    & \qquad \qquad \qquad \qquad \qquad +p\sqrt{(q^2-1)(p^4q^2-p^2q^2-p^4+p^2-1}) \\
    & \qquad \qquad \qquad \qquad \qquad +pq\sqrt{(q^2-1)(p^4q^4-p^2q^4-p^4q^2+p^2q^2-1})\bigg) \\
    & \qquad \qquad \qquad \qquad \qquad +\sqrt{(q^2-1)}\big(\sqrt{q^2-2}+q\sqrt{q^4-q^2-1}\big),
\end{align*}
\begin{align*}
    \GA(\mathcal{B}(G))& =1+2\sqrt{(p^2-1)^3}\bigg(\frac{1}{p^2}+\frac{p^3}{p^4-p^2+1}+\frac{\sqrt{(q^2-1)^3}}{p^2q^2-p^2-q^2+2} \\
    & \qquad +\frac{q^3\sqrt{(q^2-1)^3}}{p^2q^4-p^2q^2-q^4+q^2+1}+\frac{p^3\sqrt{(q^2-1)^3}}{p^4q^2-p^2q^2-p^4+p^2+1} \\
    & \qquad +\frac{p^3q^3\sqrt{(q^2-1)^3}}{p^4q^4-p^2q^4-p^4q^2+p^2q^2+1}\bigg)+2\sqrt{(q^2-1)^3}\bigg(\frac{1}{q^2}+\frac{q^3}{q^4-q^2+1}\bigg),
\end{align*}
\begin{align*}
    H(\mathcal{B}(G))&=1+2(p^2-1)\bigg(\frac{1}{p^2}+\frac{p^2}{p^4-p^2+1}+\frac{q^2-1}{p^2q^2-p^2-q^2+2} \\
    & \qquad +\frac{p^2(q^2-1)}{p^4q^2-p^2q^2-p^4+p^2+1}+\frac{q^2(q^2-1)}{p^2q^4-p^2q^2-q^4+q^2+1} \\
    & \qquad +\frac{p^2q^2(q^2-1)}{p^4q^4-p^2q^4-p^4q^2+p^2q^2+1}\bigg)+2(q^2-1)\bigg(\frac{1}{q^2}+\frac{q^2}{q^4-q^2+1}\bigg) \text{ and }
\end{align*}	  
\begin{align*}
    \SCI(\mathcal{B}(G))&=\frac{1}{\sqrt{2}}+(p^2-1)\bigg(\frac{1}{p}+\frac{p^2}{\sqrt{p^4-p^2+1}}+\frac{q^2-1}{\sqrt{p^2q^2-p^2-q^2+2}} \\
    & \qquad +\frac{p^2(q^2-1)}{p^4q^2-p^2q^2-p^4+p^2+2}+\frac{q^2(q^2-1)}{\sqrt{p^2q^4-p^2q^2-q^4+q^2+1}} \\
    & \qquad +\frac{p^2q^2(q^2-1)}{\sqrt{p^4q^4-p^2q^4-p^4q^2+p^2q^2+1}}\bigg)+(q^2-1)\bigg(\frac{1}{q}+\frac{q^2-1}{\sqrt{q^4-q^2+1}}\bigg).
\end{align*}
\end{theorem}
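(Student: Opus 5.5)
The plan is to substitute the vertex degrees from Theorem \ref{structure_cyclic_p^2q^2} into the five formulas of Lemma \ref{other_topological_index}. Then simplify each resulting sum of nine terms by factoring.

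First I record the nine degrees $d_H=\deg_{\mathcal{B}(G)}(H)$ and put each in factored form:
\begin{align*}
&d_{\{1\}}=1, \quad d_{\langle a^{pq^2}\rangle}=p^2-1, \quad d_{\langle a^{q^2}\rangle}=p^2(p^2-1), \quad d_{\langle a^{p^2q}\rangle}=q^2-1, \quad d_{\langle a^{p^2}\rangle}=q^2(q^2-1),\\
&d_{\langle a^{pq}\rangle}=(p^2-1)(q^2-1), \quad d_{\langle a^{p}\rangle}=q^2(p^2-1)(q^2-1), \quad d_{\langle a^{q}\rangle}=p^2(p^2-1)(q^2-1),\\
&d_{\langle a\rangle}=p^2q^2(p^2-1)(q^2-1).
\end{align*}
Each factorization is immediate from the expanded star sizes in Theorem \ref{structure_cyclic_p^2q^2}. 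In each case $d_H$ is a square monomial times a product of the factors $p^2-1$ and $q^2-1$. This is what produces the factors $p$, $q$, $pq$ and $\sqrt{p^2-1}$, $\sqrt{q^2-1}$ in the claimed formulas.

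Next I treat the indices one at a time, grouping the terms by which of $p^2-1$, $q^2-1$ they contain.
\begin{itemize}
\item[(i)] For $R=\sum d_H^{1/2}$, the square roots are $1$, $\sqrt{p^2-1}$, $p\sqrt{p^2-1}$, $\sqrt{q^2-1}$, $q\sqrt{q^2-1}$, and $\sqrt{p^2-1}\sqrt{q^2-1}$ times $1$, $q$, $p$, $pq$. Collecting the terms with $\sqrt{p^2-1}$ gives the stated $R$.
\item[(ii)] For $\ABC=\sum\sqrt{d_H(d_H-1)}$, I write $\sqrt{d_H}\sqrt{d_H-1}$ and pull out the square-root factor from the previous step. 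For example, $\sqrt{p^2(p^2-1)}\sqrt{p^4-p^2-1}=p\sqrt{p^2-1}\sqrt{p^4-p^2-1}$. The identity vertex contributes $0$.
\item[(iii)] For $\GA$, $H$ and $\SCI$, each summand is $2d_H^{3/2}/(1+d_H)$, $2d_H/(1+d_H)$ and $d_H/\sqrt{1+d_H}$ respectively. I substitute the factored $d_H$, expand $1+d_H$ in its polynomial form (for example $p^2q^2-p^2-q^2+2$), and factor $2\sqrt{(p^2-1)^3}$ or $2(p^2-1)$ out of the $p$-containing terms. The identity vertex gives $1$, $1$ and $1/\sqrt2$.
\end{itemize}

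I expect the main obstacle to be bookkeeping rather than any conceptual step. The terms divisible by both $p^2-1$ and $q^2-1$ must be grouped under the $(p^2-1)$ factor consistently. Also, $d_H^{3/2}$ must be computed correctly, e.g. $(p^2q^2(p^2-1)(q^2-1))^{3/2}=p^3q^3\sqrt{(p^2-1)^3(q^2-1)^3}$.

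While doing this I would verify each term against the stated expressions, checking for example whether square roots are missing in some $\SCI$ denominators. Where the statement appears to contain a typo, I would follow the formula produced by Lemma \ref{other_topological_index}.
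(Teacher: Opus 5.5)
Your plan is the paper's own argument: the paper just substitutes the degrees coming from Theorem \ref{structure_cyclic_p^2q^2} into Lemma \ref{other_topological_index}, your factored degrees are correct, and the term-by-term substitution does give the stated $R$, $\ABC$, $\GA$ and $H$. Your suspicion about $\SCI$ is justified, and the printed $\SCI$ formula is in fact false: inside the $(p^2-1)(\cdots)$ bracket the order-$p^2q$ term should be $\frac{p^2(q^2-1)}{\sqrt{p^4q^2-p^2q^2-p^4+p^2+1}}$ (the statement drops the square root and has $+2$ instead of $+1$), and inside the $(q^2-1)(\cdots)$ bracket the order-$q^2$ term should be $\frac{q^2}{\sqrt{q^4-q^2+1}}$ rather than $\frac{q^2-1}{\sqrt{q^4-q^2+1}}$; for $p=2$, $q=3$ these two components contribute $96/\sqrt{97}$ and $72/\sqrt{73}$, not the printed $96/98$ and $64/\sqrt{73}$, so following the lemma as you propose proves a corrected $\SCI$ formula rather than the stated one.
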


\section{Various spectra and energies of SGB-graph}
    Let $\mathcal{G}$ be a simple graph with vertex set $V(\mathcal{G})=\{v_i:i=1,2, \ldots, n\}$ and $|e(\mathcal{G})|=m$. Let $A(\mathcal{G})$ and $D(\mathcal{G})$ denote the adjacency matrix and degree matrix of $\mathcal{G}$ respectively. The set of eigenvalues of $A(\mathcal{G})$ along with their multiplicities is defined as the spectrum of $\mathcal{G}$. The Laplacian matrix and signless Laplacian matrix of $\mathcal{G}$ are given by $L(\mathcal{G}):=D(\mathcal{G})-A(\mathcal{G})$ and $Q(\mathcal{G}):=D(\mathcal{G})+A(\mathcal{G})$ respectively. The Laplacian spectrum (L-spectrum) and signless Laplacian spectrum (Q-spectrum) of $\mathcal{G}$ is the set of eigenvalues of $L(\mathcal{G})$ and $Q(\mathcal{G})$ along with their multiplicities respectively. The common neighborhood of two distinct vertices $v_i$ and $v_j$, denoted by $C(v_i, v_j)$, is the set of all vertices other than $v_i$ and $v_j$ which are adjacent to both $v_i$ and $v_j$. The common neighborhood matrix of $\mathcal{G}$, denoted by $CN(\mathcal{G})$, is defined as
\[
(CN(\mathcal{G}))_{i, j}= \begin{cases}
	|C(v_i, v_j)|, & \text{if } i \neq j \\
	0, & \text{if } i=j.
\end{cases}
\]
The set of all eigenvalues of $CN(\mathcal{G})$ along with their multiplicities is called the common neighborhood spectrum (CN-spectrum) of $\mathcal{G}$. We write $\Spec(\mathcal{G})$, $\L-Spec(\mathcal{G})$, $\Q-Spec(\mathcal{G})$ and $\CN-Spec(\mathcal{G})$ to denote the spectrum, L-spectrum, Q-spectrum and CN-spectrum of $\mathcal{G}$ respectively. These multi-sets $\Spec(\mathcal{G})$/$\L-Spec(\mathcal{G})$/$\Q-Spec(\mathcal{G})$/$\CN-Spec(\mathcal{G})$ are described as  $\{(x_1)^{n_1}, (x_2)^{n_2}, \dots, (x_k)^{n_k}\}$ where $x_i$'s are eigenvalues of $A(\mathcal{G})$/$L(\mathcal{G})$/$Q(\mathcal{G})$/$CN(\mathcal{G})$ and $n_i$'s are their multiplicities. The graph $\mathcal{G}$ is called integral/L-integral/Q-integral/CN-integral if  $\Spec(\mathcal{G})$/$\L-Spec(\mathcal{G})$/$\Q-Spec(\mathcal{G})$/$\CN-Spec(\mathcal{G})$ contain only integers. %In graph theory, it is a general problem to determine integral, L-integral, Q-integral and CN-integral graphs.
%(see \cite{HaSc-1974,BaCv-2002,FKMN-2005,Merris-1994,CRS-2007,Stanic-2009,ASG}).

The energy, $E(\mathcal{G})$ and common neighborhood energy (CN-energy), $E_{CN}(\mathcal{G})$ of $\mathcal{G}$ are the sum of the absolute values of the eigenvalues of $A(\mathcal{G})$ and $CN(\mathcal{G})$ respectively. Thus 
\[
E(\mathcal{G}) = \sum_{\alpha \in \Spec(\mathcal{G})}|\alpha| \quad \text{ and } \quad E_{CN}(\mathcal{G}) = \sum_{\beta \in \CN-Spec(\mathcal{G})}|\beta|.
\]
 The Laplacian energy (L-energy), $LE(\mathcal{G})$ and signless Laplacian energy (Q-energy), $LE^{+}(\mathcal{G})$ of $\mathcal{G}$ are defined as
\[
LE(\mathcal{G})=\sum_{\lambda \in \L-Spec(\mathcal{G})} \left| \lambda-\frac{2m}{n} \right| \quad \text{and} \quad LE^{+}(\mathcal{G})= \sum_{\mu \in \Q-Spec(\mathcal{G})} \left| \mu-\frac{2m}{n} \right|.
\]
Note that energy of a graph was introduced by Gutman \cite{Gutman-78} and   the other graph energies mentioned above were introduced by Gutman et al. \cite{GZ06,ACGMR11,ASG}.
It is well-known that $E(K_n)=LE(K_n)=LE^+(K_n)=2(n-1)$ and $E_{CN}(K_n)=2(n-1)(n-2)$.
A graph $\mathcal{G}$ with $|V(\mathcal{G})| =n$  is called hyperenergetic if $E(\mathcal{G}) > E(K_n)$. It is called hypoenergetic if $E(\mathcal{G}) < n$. Similarly, $\mathcal{G}$ is called L-hyperenergetic if $LE(\mathcal{G}) > LE(K_n)$, Q-hyperenergetic if $LE^+(\mathcal{G}) > LE^+(K_n)$ and CN-hyperenergetic if $E_{CN}(\mathcal{G}) > E_{CN}(K_n)$. 
It is still an open problem to find a CN-hyperenergetic graph (see \cite[Open Problem 1]{ASG}).

The common neighborhood graph of $\mathcal{G}$, denoted by  $\con(\mathcal{G})$,  is a graph whose adjacency matrix is given by
\begin{align*}
	(A(\con(\mathcal{G})))_{i,j}=\begin{cases}
		1, & \text{if } |C(v_i, v_j)| \geq 1 \text{ and } i \neq j \\
		0, & \text{otherwise.}
	\end{cases}
\end{align*}
It is easy to see that $CN(\mathcal{G}) = A(\con(\mathcal{G}))$  if   $\mathcal{G}=K_{1, n}$, the star on $n+1$ vertices. In  \cite[Example 2.1]{AAGS12}, it was shown that $\con(K_{1, n}) =  K_1 \sqcup K_n$. Therefore,  $\CN-Spec(K_{1, n})= \Spec(K_1) \sqcup \Spec(K_n)$.

 In this section, we compute spectrum, L-spectrum,  Q-spectrum, CN-spectrum and their corresponding energies of $\mathcal{B}(G)$ for the groups considered in Section 2 including cyclic groups of order $p^n$ for any prime $p$ and $n \geq 1$. Consequently, we shall show that $\mathcal{B}(G)$ is not integral but L-integral, Q-integral and CN-integral for these groups. Further, we shall show that $\mathcal{B}(G)$ is hypoenergetic but neither hyperenergetic, L-hyperenergetic, Q-hyperenergetic nor CN-hyperenergetic if $G$ is one of the above mentioned groups. 
%Note that it is still an open problem to find a CN-hyperenergetic graph (see \cite[Open Problem 1]{ASG}).  
Gutman et al. \cite{E-LE-Gutman}  conjectured that $E(\mathcal{G}) \leq LE(\mathcal{G})$  which is known as E-LE conjecture. Gutman \cite{Gutman-78} also conjectured that ``$\mathcal{G}$ is not hyperenergetic if $\mathcal{G} \ncong K_{|v(\mathcal{G})|}$". However, both the conjectures were disproved (see \cite{Liu-09,SSM,Gutman-2011,DNN-24,SN24}). We shall show that $\mathcal{B}(G)$ satisfies both the above mentioned conjectures when $G$ is one of the above mentioned groups. 
%We conclude the paper showing that  $\mathcal{B}(G)$ satisfies E-LE conjecture for these groups.

The following well-known results are  useful in computing various spectra and energies of $\mathcal{B}(G)$.
\begin{lemma}\label{all_spec_of_star}
	If \,$\mathcal{G}=K_{1, n}$, the star on $n+1$ vertices, then
	$\Spec(\mathcal{G})=\big{\{}(0)^{n-1}, \left(\pm \sqrt{n}\right)^1\big{\}}$, 
	$\L-Spec(\mathcal{G})=\left\lbrace(0)^1, (1)^{n-1}, (n+1)^1\right\rbrace \, = \,  \Q-Spec(\mathcal{G})$ and
	%$\Q-Spec(\mathcal{G})=\left\lbrace(0)^1, (1)^{n-1}, (n+1)^1\right\rbrace$ and
	$\CN-Spec(\mathcal{G}) = \big{\{}(0)^1, (-1)^{n-1},$ $ (n-1)^1\big{\}}$.
\end{lemma}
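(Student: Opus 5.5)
We will compute each of the four spectra of $K_{1,n}$ directly from its matrices. Label the centre $v_0$ and the leaves $v_1,\dots,v_n$. Write $J$ for the all-ones matrix, $\mathbf{1}$ for the all-ones vector and $e_i$ for the standard basis vectors.

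\emph{Adjacency spectrum.} The matrix $A(K_{1,n})$ has rank $2$, since every leaf row equals $e_0^{T}$. So $0$ is an eigenvalue of multiplicity at least $n-1$. Concretely, any vector supported on the leaves with coordinate sum zero lies in the kernel, and these vectors form an $(n-1)$-dimensional space. For the remaining two eigenvalues we try $x=(t,1,\dots,1)$. The equation $Ax=\lambda x$ reduces to $n=\lambda t$ and $t=\lambda$, so $\lambda^2=n$ and $\lambda=\pm\sqrt n$. This accounts for all $n+1$ eigenvalues.

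\emph{Laplacian and signless Laplacian.} Here $L=D-A$, with $D=\operatorname{diag}(n,1,\dots,1)$. The vector $\mathbf 1$ is in the kernel, giving the eigenvalue $0$. The leaf vectors with coordinate sum zero satisfy $Ax=0$ and are supported where $D$ acts as $1$, so they give eigenvalue $1$ with multiplicity $n-1$. The trace of $L$ is $2n$, so the last eigenvalue is $2n-(n-1)=n+1$. For $Q=D+A$ the same argument applies. Alternatively, since $K_{1,n}$ is bipartite, $Q$ and $L$ are similar via the diagonal sign matrix $S=\operatorname{diag}(-1,1,\dots,1)$, because $SLS=Q$. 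Either way the spectra coincide.

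\emph{CN-spectrum.} Any two distinct leaves have exactly one common neighbour, namely $v_0$. The centre and a leaf have no common neighbour. Hence
\[
CN(K_{1,n})=(0)\oplus(J_n-I_n),
\]
which is $A(\con(K_{1,n}))=A(K_1\sqcup K_n)$, as noted before the lemma. The spectrum of $J_n-I_n$ is $\{n-1,(-1)^{n-1}\}$: take eigenvector $\mathbf 1$ for $n-1$, and vectors with coordinate sum zero for $-1$. Adding the $0$ from $K_1$ completes the list.

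The only point needing care is bookkeeping of multiplicities, in particular making sure the eigenvalue counts total $n+1$ in each case. For the adjacency matrix this follows from the rank argument, and for the Laplacian from the trace identity. We do not expect any real obstacle.
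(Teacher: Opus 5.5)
Your proof is correct. The paper gives no proof of this lemma. It lists it among ``well-known results'', and its only justification is the remark just before it that $CN(K_{1,n})=A(\con(K_{1,n}))$ with $\con(K_{1,n})=K_1\sqcup K_n$, which is exactly your CN computation. Your direct verification of the adjacency and (signless) Laplacian spectra supplies the rest. One small imprecision: for $Q$ the zero eigenvector is $S\mathbf 1=(-1,1,\dots,1)$ rather than $\mathbf 1$, so the Laplacian argument does not transfer verbatim, but your similarity $SLS=Q$ already covers this.
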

\begin{lemma}\label{all_energy_of_star}
	If \,$\mathcal{G}=K_{1, n}$, the star on $n+1$ vertices, then
	$E(\mathcal{G})=2 \sqrt{n}$, 
	$LE(\mathcal{G})=\frac{2n^2+2}{n+1} = LE^+(\mathcal{G})$
	%$LE^+(\mathcal{G})=\frac{2n^2+2}{n+1}$ 
	and 
	$E_{CN}(\mathcal{G})= 2n-2$. 
\end{lemma}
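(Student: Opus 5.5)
The plan is to compute the four energies directly from the spectra in Lemma \ref{all_spec_of_star}. For the star $K_{1,n}$ we have $|V|=n+1$ and $|e|=n$, so the average degree is $\frac{2m}{n} = \frac{2n}{n+1}$.

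\emph{Adjacency and common-neighbourhood energies.} These need no shift. Summing absolute values gives $E(K_{1,n}) = 2\sqrt{n}$, since only $\pm\sqrt{n}$ are nonzero. For the CN-spectrum, valid for $n\ge 1$, we get $E_{CN}(K_{1,n}) = (n-1)\cdot 1 + |n-1| = 2n-2$. The CN-spectrum itself comes from $\con(K_{1,n}) = K_1 \sqcup K_n$ together with $\Spec(K_n)=\{(n-1)^1,(-1)^{n-1}\}$.

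\emph{Laplacian and signless Laplacian energies.} The L- and Q-spectra coincide (the star is bipartite), so it suffices to compute $LE$. Writing $c=\frac{2n}{n+1}$, we evaluate $|0-c| + (n-1)|1-c| + |n+1-c|$. The only care needed is with signs:
\begin{itemize}
\item $c \ge 1$ exactly when $n\ge 1$, so $|1-c| = \frac{n-1}{n+1}$;
\item $n+1-c = \frac{n^2+1}{n+1} > 0$.
\end{itemize}
Hence
\[
LE(K_{1,n}) = \frac{2n + (n-1)^2 + n^2+1}{n+1} = \frac{2n^2+2}{n+1},
\]
and $LE^+(K_{1,n})$ takes the same value. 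There is no real obstacle beyond this sign check; the case $n=1$ is consistent, giving $LE=2$.
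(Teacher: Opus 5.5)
Your proposal is correct and is the intended argument. The paper states this lemma as well known and gives no proof; the derivation is the one you give, summing absolute values over the spectra of Lemma~\ref{all_spec_of_star}, with the same sign checks for $\frac{2n}{n+1}$ that the paper carries out in the proof of Theorem~\ref{all_energy_cyclic_p^n}. Your remark that the case $n=1$ (where $K_{1,1}=K_2$) is consistent also checks out.
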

\begin{remark}
Since $\mathcal{B}(G)$ is the union of some stars, in view of Lemma \ref{all_spec_of_star}, we have $\L-Spec(\mathcal{B}(G)) = \Q-Spec(\mathcal{B}(G))$ and so $LE(\mathcal{B}(G))= LE^+(\mathcal{B}(G))$.
\end{remark}

\begin{theorem}\label{all_spec_cyclic_p^n}
    If $G$ is a cyclic group of order $p^n$, where $p$ is any prime and $n \geq 1$, then
%    \begin{align*}
%		\Spec(\mathcal{B}(G))&=\left\lbrace(0)^{p^{2n}-n-1}, (-1)^1, (1)^1, \left(\sqrt{p^2-1}\right)^1, \left(-\sqrt{p^2-1}\right)^1, \left(p\sqrt{p^2-1}\right)^1, \right. \\
%		&\qquad \qquad  \left. \left(-p\sqrt{p^2-1}\right)^1, \left(p^2\sqrt{p^2-1}\right)^1, \left(-p^2\sqrt{p^2-1}\right)^1, \ldots, \right. \\
%        & \qquad \qquad \qquad \qquad \qquad \qquad \left. \left(p^{n-1}\sqrt{p^2-1}\right)^1, \left(-p^{n-1}\sqrt{p^2-1}\right)^1 \right\rbrace,
%	\end{align*} 
\begin{center}
	 {\small{$\Spec(\mathcal{B}(G))=\left\lbrace(0)^{p^{2n}-n-1}, (\pm1 )^1,  \left(\pm \sqrt{p^2-1}\right)^1,  \left(\pm p\sqrt{p^2-1}\right)^1, \dots,  \left(\pm p^{n-1}\sqrt{p^2-1}\right)^1 \right\rbrace$,}}     
\end{center}	
\begin{align*}
	\L-Spec(\mathcal{B}(G))&=\left\lbrace(0)^{n+1}, (1)^{p^{2n}-n-1}, (2)^1, (p^2)^1, (p^4-p^2+1)^1, (p^6-p^4+1)^1, \ldots, \right.\\
    & \qquad \qquad \qquad \qquad \qquad \quad \left. (p^{2n}-p^{2n-2}+1)\right\rbrace = \Q-Spec(\mathcal{B}(G))
	\end{align*}
and $\CN-Spec(\mathcal{B}(G))=\lbrace(0)^{n+2}, (-1)^{p^{2n}-n-1},(p^2-2)^1, (p^4-p^2-1)^1, (p^6-p^4-1)^1, \ldots$, $(p^{2n}-p^{2n-2}-1)^1\rbrace$.
\end{theorem}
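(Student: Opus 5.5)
The plan is to combine the structure result of Theorem \ref{structure_cyclic_p^n} with Lemma \ref{all_spec_of_star}. The spectrum (adjacency, Laplacian, signless Laplacian, common neighbourhood) of a disjoint union of graphs is the multiset union of the spectra of the components, because each of the four matrices is block diagonal with respect to the components. For $A$, $L$ and $Q$ this is immediate. For $CN$, two vertices in different components have no common neighbour, so the off-diagonal blocks vanish as well. Theorem \ref{structure_cyclic_p^n} gives
\[
\mathcal{B}(G)=K_2 \sqcup K_{1, p^2-1} \sqcup K_{1, p^2(p^2-1)} \sqcup \cdots \sqcup K_{1, p^{2n-2}(p^2-1)}.
\]
So it suffices to list the spectra of $K_2$ and of $K_{1,m_i}$ with $m_i=p^{2i-2}(p^2-1)$ for $i=1,\dots,n$, and then collect multiplicities.

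For $K_2=K_{1,1}$, the adjacency spectrum is $\{\pm1\}$, the L- and Q-spectra are $\{0,2\}$, and $CN(K_2)$ is the $2\times 2$ zero matrix, so its CN-spectrum is $\{(0)^2\}$. This agrees with Lemma \ref{all_spec_of_star} for $n=1$, once one reads the $(n-1)$ entry $0$ and the empty $(-1)^{0}$ correctly. For $K_{1,m_i}$, Lemma \ref{all_spec_of_star} gives the following.
\begin{itemize}
\item Adjacency: $\pm\sqrt{m_i}=\pm p^{i-1}\sqrt{p^2-1}$ and $0$ with multiplicity $m_i-1$.
\item L- and Q-spectra: $0$, then $1^{m_i-1}$, then $m_i+1=p^{2i}-p^{2i-2}+1$; for $i=1$ this last value is $p^2$.
\item CN-spectrum: $0$, then $(-1)^{m_i-1}$, then $m_i-1=p^{2i}-p^{2i-2}-1$; for $i=1$ this last value is $p^2-2$.
\end{itemize}

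It remains to count the multiplicities. The key identity is the telescoping sum
\[
\sum_{i=1}^n m_i = p^{2n}-1,
\]
which also follows from the edge count $|e(\mathcal{B}(G))|=|G|^2$ minus the single edge of $K_2$. It gives
\[
\sum_{i=1}^n (m_i-1)=p^{2n}-1-n.
\]
This is exactly the multiplicity of $0$ in the adjacency spectrum, the multiplicity of $1$ in the L- and Q-spectra, and the multiplicity of $-1$ in the CN-spectrum.

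The zero eigenvalue in the L- and Q-spectra occurs once per component, giving $n+1$. In the CN-spectrum, $0$ occurs twice from $K_2$ and once from each star, giving $n+2$. For the adjacency spectrum, $K_2$ contributes no zero but contributes $\pm1$. In the Q-spectrum there is a subtlety: $Q(K_{1,m})$ has eigenvalue $0$ because a star is bipartite, so the L- and Q-spectra coincide, as noted in the Remark.

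The main obstacle is only bookkeeping: treating $K_2$ correctly as the degenerate star, and checking that the multiplicities sum to the number of vertices, $p^{2n}+n+1$. For instance, in the adjacency spectrum the count is $(p^{2n}-n-1)+2+2n=p^{2n}+n+1$, and the other spectra check similarly.
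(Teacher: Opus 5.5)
Your proposal is correct and follows the paper's proof. Both decompose $\mathcal{B}(G)$ into $K_2$ and the stars $K_{1,p^{2i-2}(p^2-1)}$ via Theorem \ref{structure_cyclic_p^n}, apply Lemma \ref{all_spec_of_star} to each component, and take the multiset union. You also make explicit two points the paper leaves implicit: why the spectra of a disjoint union combine (including for $CN$), and how the multiplicities are obtained from the telescoping sum $\sum_{i=1}^n p^{2i-2}(p^2-1)=p^{2n}-1$.
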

\begin{proof}
    From Theorem \ref{structure_cyclic_p^n}, we have $\mathcal{B}(G)=K_2 \sqcup K_{1, p^2-1} \sqcup K_{1, p^2(p^2-1)} \sqcup \cdots \sqcup K_{1, p^{2n-2}(p^2-1)}$. Now, by Lemma \ref{all_spec_of_star}, we get 
\begin{center}
    $\Spec(K_2)=\{(\pm 1)^1\}, \, \Spec(K_{1, p^2-1})=\left\{(0)^{p^2-2}, (\pm \sqrt{p^2-1})^1\right\},$ 
    $\Spec(K_{1, p^2(p^2-1)})=\left\{(0)^{p^4-p^2-1}, (\pm p\sqrt{p^2-1})^1\right\}$, $\ldots$, $\Spec(K_{1, p^{2n-2}(p^2-1)})=\left\{(0)^{p^{2n}-p^{2n-2}-1}, (\pm p^{n-1}\sqrt{p^2-1})^1\right\}$. 
  \end{center}   
Since we have \,  $\Spec(\mathcal{B}(G))\, = \, \Spec(K_2) \, \sqcup \, \Spec(K_{1, p^2-1}) \, \sqcup \, \Spec(K_{1, p^2(p^2-1)}) \, \sqcup \, \cdots \, \sqcup \, $ $ \Spec(K_{1, p^{2n-2}(p^2-1)})$,  we get the desired expression of $\Spec(\mathcal{B}(G))$. Also, by Lemma \ref{all_spec_of_star}, we get
\begin{center}
    $\L-Spec(K_2)=\{(0)^1, (2)^1\},$ $ \L-Spec(K_{1, p^2-1})=\left\{ (0)^1, (1)^{p^2-2}, (p^2)^1\right\},$ $\L-Spec(K_{1, p^2(p^2-1)})=\left\{ (0)^1, (1)^{p^4-p^2-1}, (p^4-p^2+1)^1\right\} $, $\ldots$,
   $\L-Spec(K_{1, p^{2n-2}(p^2-1)})=\left\{(0)^1, (1)^{p^{2n}-p^{2n-2}-1}, (p^{2n}-p^{2n-2}+1)^1\right\}$. 
\end{center}
Since \quad $\L-Spec(\mathcal{B}(G)) =\L-Spec(K_2) \, \sqcup \, \L-Spec(K_{1, p^2-1}) \, \sqcup \, \L-Spec(K_{1, p^2(p^2-1)}) \, \sqcup \, \ldots \, \sqcup$ $\L-Spec(K_{1, p^{2n-2}(p^2-1)})$
%\begin{align*}
%    & \quad \L-Spec(\mathcal{B}(G)) \\
%    & =\L-Spec(K_2) \sqcup \L-Spec(K_{1, p^2-1}) \sqcup \L-Spec(K_{1, p^2(p^2-1)}) \sqcup \ldots \sqcup \L-Spec(K_{1, p^{2n-2}(p^2-1)}) 
%    &  =\Q-Spec(K_2) \sqcup \Q-Spec(K_{1, p^2-1}) \sqcup \Q-Spec(K_{1, p^2(p^2-1)}) \sqcup \ldots \sqcup \Q-Spec(K_{1, p^{2n-2}(p^2-1)}) \\
%    &  = \Q-Spec(\mathcal{B}(G))
%\end{align*}
we get the desired expression for $\L-Spec(\mathcal{B}(G))$ and hence the expression for $\Q-Spec(\mathcal{B}(G))$. Further, we have
\begin{center}
    $\CN-Spec(K_2)=\{(0)^2\}, \, \CN-Spec(K_{1, p^2-1})=\left\{(0)^1, (-1)^{p^2-2}, (p^2-2)^1\right\},$ 
    $\CN-Spec(K_{1, p^2(p^2-1)})=\left\{(0)^1, (-1)^{p^4-p^2-1}, (p^4-p^2-1)^1\right\}$, $\ldots$, 
    $\CN-Spec(K_{1, p^{2n-2}(p^2-1)})=\left\{(0)^1, (-1)^{p^{2n}-p^{2n-2}-1}, (p^{2n}-p^{2n-2}-1)^1\right\}$. 
\end{center}
Since  

\noindent $\CN-Spec(\mathcal{B}(G))$

\qquad $=\CN-Spec(K_2) \, \sqcup \, \CN-Spec(K_{1, p^2-1}) \, \sqcup \, \CN-Spec(K_{1, p^2(p^2-1)}) \sqcup \cdots \, \sqcup $ $\CN-Spec(K_{1, p^{2n-2}(p^2-1)})$,  

\noindent we get the desired expression of $\CN-Spec(\mathcal{B}(G))$.
\end{proof}
\begin{theorem}\label{all_energy_cyclic_p^n}
	If $G$ is a cyclic group of order $p^n$, where $p$ is any prime and $n \geq 1$, then
	$E(\mathcal{B}(G))= 2+2\sqrt{p^2-1}\left(\frac{p^n-1}{p-1}\right)$,
	$LE(\mathcal{B}(G))= LE^+(\mathcal{B}(G))=\frac{2p^{4n}+2n^2+4n+2}{p^{2n}+n+1}$ 
	\quad  and
	$E_{CN}(\mathcal{B}(G))=2p^{2n}-2n-2$.
\end{theorem}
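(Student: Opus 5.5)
The plan is to read off every energy directly from the spectra in Theorem \ref{all_spec_cyclic_p^n}, using the star decomposition of Theorem \ref{structure_cyclic_p^n}. We have $|V(\mathcal{B}(G))| = p^{2n}+n+1$, since the vertex set is $G\times G$ together with the $n+1$ subgroups, and $|e(\mathcal{B}(G))| = p^{2n}$. Throughout, the one identity needed is the telescoping sum
\[
\sum_{k=1}^{n}\left(p^{2k}-p^{2k-2}\right)=p^{2n}-1 .
\]

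\textbf{Energy.} Summing $|\alpha|$ over $\Spec(\mathcal{B}(G))$ gives
\[
2+2\sqrt{p^2-1}\,(1+p+\cdots+p^{n-1}).
\]
The geometric sum equals $\frac{p^n-1}{p-1}$, which is the stated value of $E(\mathcal{B}(G))$.

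\textbf{CN-energy.} All CN-eigenvalues other than $0$ and $-1$ are nonnegative, since $p^2-2>0$. So
\[
E_{CN}(\mathcal{B}(G)) = (p^{2n}-n-1)+\sum_{k=1}^n\left(p^{2k}-p^{2k-2}-1\right).
\]
The second sum is $(p^{2n}-1)-n$, so the total is $2p^{2n}-2n-2$.

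\textbf{Laplacian energy.} This is the only step needing care, because of the signs of $\lambda-\frac{2m}{n}$. Put
\[
N=p^{2n}+n+1, \qquad d=\frac{2p^{2n}}{N}.
\]
First I will check that $1<d<2$. The upper bound is immediate since $N>p^{2n}$, and the lower bound follows from $p^{2n}\ge 2^n\ge n+2>n+1$. Second, every eigenvalue of the form $\lambda_k=p^{2k}-p^{2k-2}+1$ is at least $p^2\ge 4>d$. The absolute values therefore resolve as follows:
\begin{itemize}[label={}]
\item $d$ for each of the $n+1$ zero eigenvalues;
\item $d-1$ for each of the $p^{2n}-n-1$ eigenvalues equal to $1$;
\item $2-d$ for the eigenvalue $2$;
\item $\lambda_k-d$ for each $k=1,\dots,n$.
\end{itemize}
Using $\sum_k\lambda_k=p^{2n}-1+n$ and collecting terms, the coefficient of $d$ is $p^{2n}-n-1$ and the constant term is $2n+2$. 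Hence
\[
LE(\mathcal{B}(G))=(p^{2n}-n-1)\,d+2n+2 .
\]
Putting this over the common denominator $N$ gives
\[
\frac{2p^{4n}+2(n+1)^2}{p^{2n}+n+1},
\]
which is the claimed expression. Finally, $LE^+(\mathcal{B}(G))=LE(\mathcal{B}(G))$ by the preceding Remark, since the L-spectrum and Q-spectrum coincide.
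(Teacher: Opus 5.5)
Your proof is correct and follows the paper's own route: read the energies off the star spectra of Theorem \ref{all_spec_cyclic_p^n}, resolve the signs of $\lambda-d$ with $d=\frac{2p^{2n}}{p^{2n}+n+1}$, and sum. You also justify $1<d<2$, which the paper uses without comment. There is one small slip: the chain $p^{2n}\ge 2^n\ge n+2$ fails at $n=1$ (since $2<3$). Write $p^{2n}\ge 4^n>n+1$ instead; nothing else in the argument changes.
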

\begin{proof}
	From Theorem \ref{all_spec_cyclic_p^n} and definition of $E(\mathcal{B}(G))$ and $E_{CN}(\mathcal{B}(G))$, we have
	\begin{align*}
		E(\mathcal{B}(G))&=2+2\sqrt{p^2-1}+2p\sqrt{p^2-1}+2p^2\sqrt{p^2-1}+\cdots+2p^{n-1}\sqrt{p^2-1} \\
		&= 2+2\sqrt{p^2-1}\left(1+p+p^2+\cdots+p^{n-1}\right) 
		%&=2+2\sqrt{p^2-1}\left(\frac{p^n-1}{p-1}\right) \qquad 
	\end{align*}
and $E_{CN}(\mathcal{B}(G))=(\underbrace{1+1+\cdots+1}
_{(p^{2n}-n-1)\text{-times}})+(p^2-2)+(p^4-p^2-1) +(p^6-p^4-1)+ \cdots$  $+(p^{2n}-p^{2n-2}-1)$.
%	\begin{align*}
%\text{ and }		E_{CN}(\mathcal{B}(G))=(\underbrace{1+1+\ldots+1}
%		_{(p^{2n}-n-1)\text{-times}})&+(p^2-2)+(p^4-p^2-1)\\
%		& +(p^6-p^4-1)+ \cdots  +(p^{2n}-p^{2n-2}-1). 
%	%	&= 2p^{2n}-2n-2.
%	\end{align*}
Thus we get the required expressions for $E(\mathcal{B}(G))$ and $E_{CN}(\mathcal{B}(G))$.

	Further, from Theorem \ref{all_spec_cyclic_p^n} and Theorem \ref{structure_cyclic_p^n}, we have 
	
\noindent	$\L-Spec(\mathcal{B}(G))=  \Q-Spec(\mathcal{B}(G))$
	
	\qquad\qquad\quad$=\lbrace(0)^{n+1}, (1)^{p^{2n}-n-1}$, $(2)^1, (p^2)^1, (p^4-p^2+1)^1, (p^6-p^4+1)^1, \ldots, (p^{2n}-p^{2n-2}+1)\rbrace$ 
	
	\noindent and $\frac{2m}{n}=\frac{2p^{2n}}{p^{2n}+n+1}$. Now, $|0-\frac{2p^{2n}}{p^{2n}+n+1}|=\frac{2p^{2n}}{p^{2n}+n+1}$, $|1-\frac{2p^{2n}}{p^{2n}+n+1}|=\frac{p^{2n}-n-1}{p^{2n}+n+1}$ and $|2-\frac{2p^{2n}}{p^{2n}+n+1}|=\frac{2n+2}{p^{2n}+n+1}$. For all $ 1\leq x\leq n$, we have 
	\begin{align*}
		 (p^{2x}-p^{2x-2}&+1)-\frac{2p^{2n}}{p^{2n}+n+1} \\
		&=\frac{p^{2n}(p^{2x}-p^{2x-2}-1)+np^{2x-2}(p^2-1)+p^{2x-2}(p^2-1)+n+1}{p^{2n}+n+1}>0, 
	\end{align*}
	since $p^{2x-2}(p^2-1)>1$ and $p^2-1>0$ for all $p$. As such, $|(p^{2x}-p^{2x-2}+1)-\frac{2p^{2n}}{p^{2n}+n+1}|=(p^{2x}-p^{2x-2}+1)-\frac{2p^{2n}}{p^{2n}+n+1}$ for all $1\leq x\leq n$. Therefore, by definition of (signless) Laplacian energy, we have
	\begin{align*}
		LE(\mathcal{B}(G))&= LE^+(\mathcal{B}(G))\\
		& = (n+1) \frac{2p^{2n}}{p^{2n}+n+1}+(p^{2n}-n-1)\frac{p^{2n}-n-1}{p^{2n}+n+1}+\frac{2n+2}{p^{2n}+n+1} \\
		& \qquad +\left((p^2-1+1)-\frac{2p^{2n}}{p^{2n}+n+1}\right)+\left((p^4-p^2+1)-\frac{2p^{2n}}{p^{2n}+n+1}\right) \\
		& \qquad   +\cdots +\left((p^{2n}-p^{2n-2}+1)-\frac{2p^{2n}}{p^{2n}+n+1}\right) \\
		&=(n+1)\frac{2p^{2n}}{p^{2n}+n+1}+(p^{2n}-n-1)\frac{p^{2n}-n-1}{p^{2n}+n+1}+\frac{2n+2}{p^{2n}+n+1} \\
		& \quad \quad \quad \quad \quad \quad \quad \quad  \quad\,\, +\left(p^{2n}+n-1-n \frac{2p^{2n}}{p^{2n}+n+1}\right).
		%&=\frac{2p^{4n}+2n^2+4n+2}{p^{2n}+n+1}.
	\end{align*}
Hence, we get the required expression.
%	This completes the proof.
\end{proof}
\begin{theorem}\label{all_energy_comparison_cyclic_p^n}
	If $G$ is a cyclic group of order $p^n$, where $p$ is any prime and $n \geq 1$, then $\mathcal{B}(G)$ is hypoenergetic but not hyperenergetic,  L-hyperenergetic, Q-hyperenergetic and CN-hyperenergetic.
\end{theorem}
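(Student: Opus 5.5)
We write $N=|V(\mathcal{B}(G))|=p^{2n}+n+1$, which holds because $\mathcal{B}(G)$ has $p^{2n}$ pair-vertices and $|L(G)|=n+1$ subgroup vertices. We compare the energies obtained in Theorem \ref{all_energy_cyclic_p^n} with $N$ and with the corresponding energies of $K_N$, namely $E(K_N)=LE(K_N)=LE^+(K_N)=2(N-1)=2p^{2n}+2n$ and $E_{CN}(K_N)=2(N-1)(N-2)$. Each of the five claims then reduces to an elementary inequality in $p\geq 2$ and $n\geq 1$.

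\emph{Hypoenergetic.} We need $E(\mathcal{B}(G))=2+2\sqrt{p^2-1}\,\frac{p^n-1}{p-1}<p^{2n}+n+1$. Since $\sqrt{p^2-1}<p$, it suffices to show
\[
2p\,\frac{p^n-1}{p-1}\leq p^{2n}+n-1 .
\]
We have $\frac{p^n-1}{p-1}=1+p+\cdots+p^{n-1}\leq 2p^{n-1}$ for $p\geq 2$ (a geometric-series estimate). Hence the left side is at most $4p^n$. The inequality $4p^n\leq p^{2n}+n-1$ holds whenever $p^n\geq 4$. The remaining small cases are $p^n\in\{2,3\}$, i.e. $n=1$ with $p=2$ or $p=3$. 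There $E=2+2\sqrt{p^2-1}$ equals $2+2\sqrt3\approx 5.46<6$ and $2+2\sqrt8\approx 7.66<11$ respectively, which we check directly. This step needs the most care, because the crude bounds fail for the smallest cases; a cleaner alternative is to bound $\sqrt{p^2-1}\,\frac{p^n-1}{p-1}<\frac{p^{n+1}}{p-1}\leq 2p^n$ and then handle small $p^n$ by hand as above.

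\emph{Not hyperenergetic.} This follows at once from the previous step: $E(\mathcal{B}(G))<N<2(N-1)$, since $N\geq 3$.

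\emph{Not L-/Q-hyperenergetic.} By the Remark, $LE=LE^+$, so one comparison covers both. We must show
\[
\frac{2p^{4n}+2(n+1)^2}{p^{2n}+n+1}\leq 2p^{2n}+2n .
\]
Clearing the positive denominator, this becomes $2p^{4n}+2(n+1)^2\leq 2(p^{2n}+n)(p^{2n}+n+1)$. Expanding, the right side is $2p^{4n}+2p^{2n}(2n+1)+2n(n+1)$. So the inequality reduces to $2(n+1)\leq 2p^{2n}(2n+1)$, which is obvious; in fact it is strict, so $LE<LE(K_N)$.

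\emph{Not CN-hyperenergetic.} We have $E_{CN}(\mathcal{B}(G))=2p^{2n}-2n-2=2(N-2n-2)<2(N-1)\leq 2(N-1)(N-2)$, using $N\geq 3$.

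Combining the four parts gives the theorem.
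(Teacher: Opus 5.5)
Your proof is correct. The non-hyperenergetic, L/Q and CN parts follow the paper essentially verbatim. Your reduction to $2(n+1)\le 2p^{2n}(2n+1)$ is exactly the paper's positive numerator $4np^{2n}+2p^{2n}-2n-2$. Your chain $E_{CN}(\mathcal{B}(G))<2(N-1)\le 2(N-1)(N-2)$ is a slightly streamlined version of the paper's direct comparison.

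The genuine difference is the hypoenergetic step. You bound the closed form $2+2\sqrt{p^2-1}\,\frac{p^n-1}{p-1}$ globally, using $\sqrt{p^2-1}<p$ and $\frac{p^n-1}{p-1}\le 2p^{n-1}$. That forces a hand check of $p^n\in\{2,3\}$, and your check is right: $2+2\sqrt3<6$ and $2+2\sqrt8<11$.

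The paper instead argues star by star. Each component $K_{1,m}$ has $m+1$ vertices and energy $2\sqrt m$, and $m+1-2\sqrt m=(\sqrt m-1)^2>0$ for $m\ge 2$. Concretely, $p^{2x}-p^{2x-2}+1>2p^{x-1}\sqrt{p^2-1}$ for $1\le x\le n$, while $K_2$ contributes $2$ to both sides. Summing gives $|V(\mathcal{B}(G))|>E(\mathcal{B}(G))$ with no case split.

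The paper's route is more structural: it shows that any disjoint union of stars containing some $K_{1,m}$ with $m\ge 2$ is hypoenergetic. That is why it carries over unchanged to the orders $pq$, $p^2q$ and $p^2q^2$. Your estimate relies on the geometric-series shape of the star sizes, so it would have to be redone for each of those families.
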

\begin{proof}
	By Theorem \ref{structure_cyclic_p^n} and Theorem \ref{all_energy_cyclic_p^n}, we have $|V(\mathcal{B}(G))|=p^{2n}+n+1$ and $E(\mathcal{B}(G))=2+2\sqrt{p^2-1}+2p\sqrt{p^2-1}+2p^2\sqrt{p^2-1}+\cdots+2p^{n-1}\sqrt{p^2-1}$.
	Since $p^2>2\sqrt{p^2-1}$, $p^4-p^2+1>2\sqrt{p^4-p^2}=2p\sqrt{p^2-1}$, $p^6-p^4+1>2\sqrt{p^6-p^4}=2p^2\sqrt{p^2-1}$, $\ldots$, $p^{2n}-p^{2n-2}+1 >2\sqrt{p^{2n}-p^{2n-2}}=2p^{n-1}\sqrt{p^2-1}$ \,\, we have 
	\begin{align*}
		p^2+(p^4-p^2+1)+&(p^6-p^4+1)+\cdots+(p^{2n}-p^{2n-2}+1)+2 \\
		& > 2+2\sqrt{p^2-1}+2p\sqrt{p^2-1}+2p^2\sqrt{p^2-1}+\cdots+2p^{n-1}\sqrt{p^2-1}.
	\end{align*}
	Therefore,
	\begin{equation} \label{cyclic-p^n-hypo}
		|V(\mathcal{B}(G))|> E(\mathcal{B}(G)).
	\end{equation}	 
	Thus, $\mathcal{B}(G)$ is hypoenergetic. 
	
	We have $E(K_{|V(\mathcal{B}(G))|}) = E(K_{p^{2n}+n+1})= 2(p^{2n}+n+1-1)=2p^{2n}+2n >p^{2n}+n+1= |V(\mathcal{B}(G))|> E(\mathcal{B}(G))$ (using \eqref{cyclic-p^n-hypo}). Therefore,  $\mathcal{B}(G)$ is not hyperenergetic.

	Also, $LE(K_{p^{2n}+n+1})=LE^+(K_{p^{2n}+n+1})=2p^{2n}+2n$. From Theorem \ref{all_energy_cyclic_p^n}, we have $LE(\mathcal{B}(G))$ $=LE^+(\mathcal{B}(G))=\frac{2p^{4n}+2n^2+4n+2}{p^{2n}+n+1}$. Now, 
	\begin{align*}
		LE(K_{p^{2n}+n+1})-LE(\mathcal{B}(G))&= LE^+(K_{p^{2n}+n+1})-LE^+(\mathcal{B}(G)) \\
		&=2p^{2n}+2n-\frac{2p^{4n}+2n^2+4n+2}{p^{2n}+n+1} \\
		&= \frac{4np^{2n}+2p^{2n}-2n-2}{p^{2n}+n+1} > 0.
	\end{align*}
	Therefore, 
	\[
	LE^+(K_{p^{2n}+n+1}) = LE(K_{p^{2n}+n+1}) > LE(\mathcal{B}(G))=LE^+(\mathcal{B}(G)).
	\] 
	Hence,  $\mathcal{B}(G)$ is neither L-hyperenergetic nor  Q-hyperenergetic.
	
	We have  $E_{CN}(K_{p^{2n}+n+1})=2(p^{2n}+n+1-1)(p^{2n}+n+1-2)=(2p^{2n}+2n)(p^{2n}+n-1) > 2p^{2n}-2n-2 = E_{CN}(\mathcal{B}(G))$ (using Theorem \ref{all_energy_cyclic_p^n}). 
	Hence, $\mathcal{B}(G)$ is not CN-hyperenergetic. This completes the proof.
\end{proof}

\begin{theorem}
	If $G$ is a cyclic group of order $p^n$, where $p$ is any prime and $n \geq 1$, then $E(\mathcal{B}(G))<LE(\mathcal{B}(G))$.
\end{theorem}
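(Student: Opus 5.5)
The plan is to compare the two energies computed in Theorem \ref{all_energy_cyclic_p^n} componentwise, and so avoid manipulating the closed form $\frac{2p^{4n}+2n^2+4n+2}{p^{2n}+n+1}$ directly. In the proof of Theorem \ref{all_energy_comparison_cyclic_p^n} we already showed that
\[
E(\mathcal{B}(G)) < |V(\mathcal{B}(G))| = p^{2n}+n+1 .
\]
It therefore suffices to show that $LE(\mathcal{B}(G)) \geq p^{2n}+n+1$, or at least that $LE(\mathcal{B}(G))$ exceeds the explicit sum $2+2\sqrt{p^2-1}\,(1+p+\cdots+p^{n-1})$.

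The most transparent route uses the expression for $LE(\mathcal{B}(G))$ obtained in the proof of Theorem \ref{all_energy_cyclic_p^n}. Write $t=\frac{2p^{2n}}{p^{2n}+n+1}$, and note $t<2$. The large eigenvalues contribute
\[
\sum_{x=1}^{n}\bigl((p^{2x}-p^{2x-2}+1)-t\bigr).
\]
For each $x$ we have $p^{2x}-p^{2x-2}+1-t > p^{2x}-p^{2x-2}-1$. Moreover,
\[
p^{2x}-p^{2x-2}-1 \geq 2p^{x-1}\sqrt{p^2-1}
\]
whenever $p^{x-1}\sqrt{p^2-1}\geq 1+\sqrt 2$. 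This follows from $(y-1)^2\geq 2$ with $y=p^{x-1}\sqrt{p^2-1}$, since $y^2-2y-1\geq 0$. That condition fails for small cases, for example $x=1$, $p=2$, where $y=\sqrt 3$.

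To handle the exceptional terms robustly, I will instead bring in the remaining summands of $LE$. These are $(n+1)t$ from the zero eigenvalues, $\frac{(p^{2n}-n-1)^2}{p^{2n}+n+1}$ from the eigenvalue $1$, and $2-t$ from the eigenvalue $2$. They sum to at least $2+nt$ plus a nonnegative term. This slack is enough to absorb the constant $2$ in $E(\mathcal{B}(G))$ and the small deficits.

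In summary, I will show
\[
LE(\mathcal{B}(G)) - E(\mathcal{B}(G)) \;\ge\; \sum_{x=1}^{n}\Bigl(p^{2x}-p^{2x-2}+1-2p^{x-1}\sqrt{p^2-1}\Bigr) + \frac{(p^{2n}-n-1)^2}{p^{2n}+n+1} - 2 + (\text{nonnegative}).
\]
Each bracket is a perfect-square-type quantity $\bigl(p^{x-1}\sqrt{p^2-1}-1\bigr)^2 > 0$, since $p^{2x}-p^{2x-2}=\bigl(p^{x-1}\sqrt{p^2-1}\bigr)^2$. Also, $\frac{(p^{2n}-n-1)^2}{p^{2n}+n+1}\geq 2$ as soon as $p^{2n}$ is moderately larger than $n$. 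Indeed, $p^{2n}\geq 4^n$ grows much faster than $n$; checking $n=1$, $p=2$ gives $\frac{4}{6}$, which is less than $2$, so small cases need care.

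The main obstacle is exactly this bookkeeping for the smallest cases, $p=2$ with $n=1$ or $2$. There I would simply verify directly from the formulas of Theorem \ref{all_energy_cyclic_p^n}. For $p=2$, $n=1$: $E=2+2\sqrt3\approx 5.46$ and $LE=\frac{32+8}{6}\approx 6.67$. For $p=2$, $n=2$: $E=2+6\sqrt3\approx 12.39$ and $LE=\frac{2\cdot 256+18}{19}\approx 27.9$. For all remaining $(p,n)$, the general inequality above gives $E(\mathcal{B}(G))<LE(\mathcal{B}(G))$.
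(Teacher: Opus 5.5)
Your proof is correct, but it is the paper's argument taken apart, and one constant got lost when you put it back together. The paper goes through $|V(\mathcal{B}(G))|=p^{2n}+n+1$, which is the route you mention in your first paragraph and then set aside. Theorem \ref{all_energy_comparison_cyclic_p^n} gives $E(\mathcal{B}(G))<|V(\mathcal{B}(G))|$ via the termwise inequalities $p^{2x}-p^{2x-2}+1>2p^{x-1}\sqrt{p^2-1}$. A one-line computation then gives $LE(\mathcal{B}(G))-|V(\mathcal{B}(G))|=\frac{(p^{2n}-n-1)^2}{p^{2n}+n+1}>0$.

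Your componentwise bookkeeping, done exactly, produces the sum of these two gaps. Put $F=\frac{(p^{2n}-n-1)^2}{p^{2n}+n+1}$. The non-large summands of $LE$ total exactly $(n+1)t+(2-t)+F=2+nt+F$. The $nt$ cancels against the $-nt$ from the large eigenvalues. The remaining $2$ (the one you said would absorb the constant in $E(\mathcal{B}(G))$) is precisely the ``(nonnegative)'' term in your summary display. Keeping it gives the identity
\[
LE(\mathcal{B}(G))-E(\mathcal{B}(G))=\sum_{x=1}^{n}\bigl(p^{x-1}\sqrt{p^2-1}-1\bigr)^2+\frac{(p^{2n}-n-1)^2}{p^{2n}+n+1}.
\]
Both terms are positive for every prime $p$ and every $n\geq 1$, since $p^{2n}\geq 4^n>n+1$. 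So no case analysis is needed.

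In your write-up you instead make $F$ absorb the $-2$ a second time, which forces you to require $F\geq 2$. That fails only at $(p,n)=(2,1)$, where $F=2/3$. At $(2,2)$ one has $F=169/19$, so that direct check is superfluous. Your numerical check at $(2,1)$ is right, so the argument as written does close. However, the claim that $F\geq 2$ in all remaining cases is only asserted (``moderately larger'') and would need a line of proof.

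In short, the paper's route is shorter because it reuses $E(\mathcal{B}(G))<|V(\mathcal{B}(G))|$. Your route, with the $2$ restored, has the advantage of giving the exact size of the gap: it is the sum of the two gaps the paper bounds separately.
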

\begin{proof}
	By Theorem \ref{all_energy_cyclic_p^n}, we get
	$E(\mathcal{B}(G)) =2+2\sqrt{p^2-1}\left(\frac{p^n-1}{p-1}\right)$ and $LE(\mathcal{B}(G)) =\frac{2p^{4n}+2n^2+4n+2}{p^{2n}+n+1}$. Also, from Theorem \ref{all_energy_comparison_cyclic_p^n}, we have $|V(\mathcal{B}(G))|=p^{2n}+n+1> E(\mathcal{B}(G))$. Now, for all $p \geq 2$ and  $n \geq 1$, we have
	\begin{align*}
		LE(\mathcal{B}(G))-|V(\mathcal{B}(G))| 
		&=\frac{2p^{4n}+2n^2+4n+2}{p^{2n}+n+1}-p^{2n}+n+1 \\
		&= \frac{p^{2n}(p^{2n}-2n-2)+n^2+2n+1}{p^{2n}+n+1} >0.
	\end{align*}
	Hence, $LE(\mathcal{B}(G))>|V(\mathcal{B}(G))|>E(\mathcal{B}(G))$.
\end{proof}

\begin{theorem}\label{all_spec_cyclic_pq}
If $G$ is a cyclic group of order $pq$, where $p$ and $q$ are any distinct primes, then
\[
\Spec(\mathcal{B}(G))=\left\lbrace\!(0)^{p^2q^2-4}, (\pm 1)^1,  \left(\pm \sqrt{p^2-1}\right)^1\!\!,  \left(\pm \sqrt{q^2-1}\right)^1\!, \left(\pm\sqrt{(p^2-1)(q^2-1)}\right)^1\right\rbrace, 
\]        
	\[
	\L-Spec(\mathcal{B}(G))=\lbrace(0)^{4}, (1)^{p^2q^2-4}, (2)^1, (p^2)^1, (q^2)^1, (p^2q^2-p^2-q^2+2)^1\rbrace = \Q-Spec(\mathcal{B}(G))
	\]
and	\, $\CN-Spec(\mathcal{B}(G))=\left\lbrace(0)^{5}, (-1)^{p^2q^2-4},(p^2-2)^1, (q^2-2)^1, (p^2q^2-p^2-q^2)^1\right\rbrace$.
\end{theorem}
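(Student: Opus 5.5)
The plan follows the proof of Theorem \ref{all_spec_cyclic_p^n}: decompose $\mathcal{B}(G)$ into stars, read off the spectra of each star from Lemma \ref{all_spec_of_star}, and take disjoint unions of the multisets. By Theorem \ref{structure_cyclic_pq} (the argument there does not use $p<q$, so it applies to any distinct primes $p,q$), we have $\mathcal{B}(G)=K_2 \sqcup K_{1, p^2-1} \sqcup K_{1, q^2-1} \sqcup K_{1, p^2q^2-p^2-q^2+1}$. The first step is to note that $p^2q^2-p^2-q^2+1=(p^2-1)(q^2-1)$, which gives the form $\sqrt{(p^2-1)(q^2-1)}$ of the last adjacency eigenvalue.

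Since $K_2=K_{1,1}$, Lemma \ref{all_spec_of_star} applies with $n$ equal to $1$, $p^2-1$, $q^2-1$ and $(p^2-1)(q^2-1)$ in turn. Each component contributes the pair $\pm\sqrt{n}$ to the adjacency spectrum, together with zeros of multiplicity $n-1$. Summing the zero multiplicities gives $0+(p^2-2)+(q^2-2)+(p^2q^2-p^2-q^2)=p^2q^2-4$. For the L-spectrum each star contributes one $0$, the eigenvalue $1$ with multiplicity $n-1$, and $n+1$. This yields $(0)^4$, $(1)^{p^2q^2-4}$ and the values $2,p^2,q^2,p^2q^2-p^2-q^2+2$. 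The Q-spectrum is the same by the Remark following Lemma \ref{all_energy_of_star}.

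For the CN-spectrum, each star contributes $0$, then $(-1)^{n-1}$, then $n-1$. For $K_2$ (the case $n=1$) this gives two zeros, since the eigenvalue $n-1=0$ also appears. Hence the zeros number $2+1+1+1=5$, the $-1$'s number $p^2q^2-4$ again, and the remaining eigenvalues are $p^2-2$, $q^2-2$ and $p^2q^2-p^2-q^2$. The only real care needed is this bookkeeping of multiplicities, especially the degenerate $K_2$ component in the CN case. As a final check, the total multiplicity in each spectrum should equal $|V(\mathcal{B}(G))|=p^2q^2+4$.
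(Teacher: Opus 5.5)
Your proposal is correct and follows the paper's proof: decompose $\mathcal{B}(G)$ via Theorem \ref{structure_cyclic_pq}, apply Lemma \ref{all_spec_of_star} to each star, treating $K_2=K_{1,1}$ so that its CN-spectrum is $\{(0)^2\}$, and take multiset unions. Your multiplicity counts agree with the paper's, and your remark that the structure theorem does not actually use $p<q$ correctly justifies the statement's ``any distinct primes'' hypothesis, which the paper leaves implicit.
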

\begin{proof}
From Theorem \ref{structure_cyclic_pq}, we have $\mathcal{B}(G)=K_2 \sqcup K_{1, p^2-1} \sqcup K_{1, q^2-1} \sqcup K_{1, p^2q^2-p^2-q^2+1}$. Now, by Lemma \ref{all_spec_of_star}, we get 
\begin{center}
    $\Spec(K_2)=\{(\pm 1)^1\}, \, \Spec(K_{1, p^2-1})=\left\{(0)^{p^2-2}, (\pm \sqrt{p^2-1})^1\right\},$ 
    $\Spec(K_{1, q^2-1})=\left\{(0)^{q^2-2}, (\pm\sqrt{q^2-1})^1\right\}$
\end{center}   
    and $\Spec(K_{1, p^2q^2-p^2-q^2+1})=\left\{(0)^{p^2q^2-p^2-q^2}, (\pm\sqrt{(p^2-1)(q^2-1)})^1\right\}$.

Since $\Spec(\mathcal{B}(G))=\Spec(K_2) \sqcup \Spec(K_{1, p^2-1}) \sqcup \Spec(K_{1, q^2-1}) \sqcup \Spec(K_{1, p^2q^2-p^2-q^2+1})$,  we get the desired expression of $\Spec(\mathcal{B}(G))$. Also, by Lemma \ref{all_spec_of_star}, we get
\begin{center}
    $\L-Spec(K_2)=\{(0)^1, (2)^1\},$ $ \L-Spec(K_{1, p^2-1})=\left\{ (0)^1, (1)^{p^2-2}, (p^2)^1\right\},$ $\L-Spec(K_{1, q^2-1})=\left\{ (0)^1, (1)^{q^2-2}, (q^2)^1\right\} $ 
\end{center}     
    and   
$\L-Spec(K_{1, p^2q^2-p^2-q^2+1})=\left\{(0)^1, (1)^{p^2q^2-p^2-q^2}, (p^2q^2-p^2-q^2+2)^1\right\}$. 
Since 

\noindent $\L-Spec(\mathcal{B}(G))\!=\!\L-Spec(K_2) \sqcup \L-Spec(K_{1, p^2-1}) \sqcup \L-Spec(K_{1, q^2-1}) \sqcup \L-Spec(K_{1, p^2q^2-p^2-q^2+1})$
\noindent we get the desired expression for $\L-Spec(\mathcal{B}(G))$ and hence the expression for $\Q-Spec(\mathcal{B}(G))$. Further, we have
\begin{center}
    $\CN-Spec(K_2)=\{(0)^2\}, \, \CN-Spec(K_{1, p^2-1})=\left\{(0)^1, (-1)^{p^2-2}, (p^2-2)^1\right\},$ 
    $\CN-Spec(K_{1, q^2-1})=\left\{(0)^1, (-1)^{q^2-2}, (q^2-2)^1\right\}$ 
\end{center}    
    and 
    $\CN-Spec(K_{1, p^2q^2-p^2-q^2+1})=\left\{(0)^1, (-1)^{p^2q^2-p^2-q^2}, (p^2q^2-p^2-q^2)^1\right\}$.

Since   

\noindent $\CN-Spec(\mathcal{B}(G))=  \CN-Spec(K_2)  \sqcup\CN-Spec(K_{1, p^2-1})  \sqcup  \CN-Spec(K_{1, q^2-1})$ 
$ \sqcup \CN-Spec(K_{1, p^2q^2-p^2-q^2+1})$, we get the desired expression of $\CN-Spec(\mathcal{B}(G))$.
\end{proof}
\begin{theorem}\label{all_energy_cyclic_pq}
	If $G$ is a cyclic group of order $pq$, where $p$ and $q$ are two  primes such that $p < q$, then
	$E(\mathcal{B}(G))= 2+2\sqrt{p^2-1}(1+\sqrt{q^2-1})+2\sqrt{q^2-1}$,
	$LE(\mathcal{B}(G))= LE^+(\mathcal{B}(G))=\frac{2p^4q^4+32}{p^2q^2+4}$ 
	\quad  and
	$E_{CN}(\mathcal{B}(G))=2p^2q^2-8$.
\end{theorem}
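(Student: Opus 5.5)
We will read off all three energies directly from Theorem \ref{all_spec_cyclic_pq}, following the pattern of the proof of Theorem \ref{all_energy_cyclic_p^n}.

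\textbf{Adjacency energy.} The nonzero eigenvalues of $\mathcal{B}(G)$ are $\pm 1$, $\pm\sqrt{p^2-1}$, $\pm\sqrt{q^2-1}$ and $\pm\sqrt{(p^2-1)(q^2-1)}$. Summing absolute values gives
\[
E(\mathcal{B}(G)) = 2+2\sqrt{p^2-1}+2\sqrt{q^2-1}+2\sqrt{p^2-1}\sqrt{q^2-1},
\]
which factors as the stated expression.

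\textbf{CN-energy.} The CN-spectrum contributes $p^2q^2-4$ from the eigenvalues $-1$, plus $(p^2-2)+(q^2-2)+(p^2q^2-p^2-q^2)$. The total is $2p^2q^2-8$.

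\textbf{Laplacian energy.} From Theorem \ref{structure_cyclic_pq}, $|V(\mathcal{B}(G))| = p^2q^2+4$ and the number of edges is $|G|^2 = p^2q^2$, so
\[
\frac{2m}{n} = \frac{2p^2q^2}{p^2q^2+4},
\]
which lies in $(1,2)$. Write $N = p^2q^2+4$. We compute each deviation term:
\begin{itemize}
\item[] The four zeros each contribute $\frac{2p^2q^2}{N}$.
\item[] The $p^2q^2-4$ ones each contribute $\frac{p^2q^2-4}{N}$, since the mean exceeds $1$.
\item[] The eigenvalue $2$ contributes $\frac{8}{N}$.
\item[] The remaining eigenvalues $p^2$, $q^2$ and $p^2q^2-p^2-q^2+2$ all exceed $2$ (for the last, because $(p^2-1)(q^2-1)\ge 3\cdot 8$). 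So each contributes itself minus $\frac{2p^2q^2}{N}$, and together they contribute $p^2q^2+2-\frac{6p^2q^2}{N}$.
\end{itemize}

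Adding everything gives
\[
\frac{8p^2q^2+(p^2q^2-4)^2+8-6p^2q^2}{N} + p^2q^2+2.
\]
Putting this over the common denominator $N$, the numerator is
\[
(p^2q^2-4)^2 + 2p^2q^2 + 8 + (p^2q^2+2)(p^2q^2+4).
\]
This should simplify to $2p^4q^4+32$. The equality $LE = LE^+$ then follows from the Remark, since the L-spectrum and Q-spectrum coincide.

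\textbf{Main obstacle.} The only delicate step is checking the signs of $\lambda - \frac{2m}{n}$ and then carrying out this final algebraic simplification; both are routine. The hypothesis $p<q$ plays no real role beyond fixing notation.
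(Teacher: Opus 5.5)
Your proposal is correct and follows the paper's own proof essentially step for step: read off the spectra from Theorem~\ref{all_spec_cyclic_pq}, sum absolute values for $E$ and $E_{CN}$, and for $LE$ sum the deviations of the Laplacian eigenvalues from $\frac{2m}{n}=\frac{2p^2q^2}{p^2q^2+4}$. Your explicit sign checks make it slightly more careful than the paper, and the simplification you left as ``should'' does go through: with $x=p^2q^2$, $(x-4)^2+2x+8+(x+2)(x+4)=2x^2+32$.
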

\begin{proof}
	From Theorem \ref{all_spec_cyclic_pq} and definition of $E(\mathcal{B}(G))$ and $E_{CN}(\mathcal{B}(G))$, we have
	\begin{align*}
		E(\mathcal{B}(G))&=2+2\sqrt{p^2-1}+2\sqrt{q^2-1}+2\sqrt{(p^2-1)(q^2-1)}
%	&= 2+2\sqrt{p^2-1}(1+\sqrt{q^2-1})+2\sqrt{q^2-1} \qquad \text{ and }
	\end{align*}
and
		$E_{CN}(\mathcal{B}(G))=(\underbrace{1+1+\cdots+1}
		_{(p^2q^2-4)\text{-times}})+p^2-2+q^2-2+p^2q^2-p^2-q^2$. 
%&= 2p^2q^2-8.
Thus we get the required expressions for $E(\mathcal{B}(G))$ and $E_{CN}(\mathcal{B}(G))$.
	
	Further, from Theorem \ref{all_spec_cyclic_pq} and Theorem \ref{structure_cyclic_pq}, we have 
\begin{center}	
	$\L-Spec(\mathcal{B}(G))= \Q-Spec(\mathcal{B}(G))=\lbrace(0)^{4}, (1)^{p^2q^2-4}, (2)^1, (p^2)^1, (q^2)^1, (p^2q^2-p^2-q^2+2)^1\rbrace$ 
\end{center}	
	\noindent and $\frac{2m}{n}=\frac{2p^2q^2}{p^2q^2+4}$. Now, $|0-\frac{2p^2q^2}{p^2q^2+4}|=\frac{2p^2q^2}{p^2q^2+4}$, $|1-\frac{2p^2q^2}{p^2q^2+4}|=\frac{p^2q^2-4}{p^2q^2+4}$, $|2-\frac{2p^2q^2}{p^2q^2+4}|=\frac{8}{p^2q^2+4}$, $|p^2-\frac{2p^2q^2}{p^2q^2+4}|=\frac{p^4q^2+4p^2-2p^2q^2}{p^2q^2+4}$, $|q^2-\frac{2p^2q^2}{p^2q^2+4}|=\frac{p^2q^4+4q^2-2p^2q^2}{p^2q^2+4}$ and $|(p^2q^2-p^2-q^2+2)-\frac{2p^2q^2}{p^2q^2+4}|=\frac{p^4q^4-p^4q^2-p^2q^4+4p^2q^2-4p^2-4q^2+8}{p^2q^2+4}$. Therefore, by definition of (signless) Laplacian energy, we have
	\begin{align*}
		LE(\mathcal{B}(G))= LE^+(\mathcal{B}(G))&= 4 \times \frac{2p^2q^2}{p^2q^2+4}+(p^2q^2-4)\frac{p^2q^2-4}{p^2q^2+4}+\frac{8}{p^2q^2+4} \\
		& \qquad +\frac{p^4q^2+4p^2-2p^2q^2}{p^2q^2+4}+\frac{p^2q^4+4q^2-2p^2q^2}{p^2q^2+4} \\
		& \qquad +\frac{p^4q^4-p^4q^2-p^2q^4+4p^2q^2-4p^2-4q^2+8}{p^2q^2+4}.
%		&=\frac{2p^4q^4+32}{p^2q^2+4}.
	\end{align*}
Hence, we get the required expression.
%	This completes the proof.
\end{proof}

\begin{theorem}\label{all_energy_comparison_cyclic_pq}
	If $G$ is a cyclic group of order $pq$, where $p$ and $q$ are twp primes such that $p < q$, then $\mathcal{B}(G)$ is hypoenergetic but not hyperenergetic,  L-hyperenergetic, Q-hyperenergetic and CN-hyperenergetic.
\end{theorem}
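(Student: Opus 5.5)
We follow the same pattern as the proof of Theorem \ref{all_energy_comparison_cyclic_p^n}. By Theorem \ref{structure_cyclic_pq}, $|V(\mathcal{B}(G))| = p^2q^2+4$. By Theorem \ref{all_energy_cyclic_pq},
\[
E(\mathcal{B}(G)) = 2+2\sqrt{p^2-1}+2\sqrt{q^2-1}+2\sqrt{(p^2-1)(q^2-1)}.
\]

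\textbf{Hypoenergeticity.} The plan is to compare each star component with its vertex count, using AM--GM in the form $n+1 > 2\sqrt{n}$ for $n\ge 1$ (strict when $n\neq 1$). The components $K_{1,p^2-1}$, $K_{1,q^2-1}$ and $K_{1,(p^2-1)(q^2-1)}$ have $p^2$, $q^2$ and $p^2q^2-p^2-q^2+2$ vertices respectively. Their energies are $2\sqrt{p^2-1}$, $2\sqrt{q^2-1}$ and $2\sqrt{(p^2-1)(q^2-1)}$, and each is strictly smaller than the corresponding vertex count. The component $K_2$ contributes $2$ both to the energy and to the vertex count. Summing gives
\[
p^2q^2+4 = 2+p^2+q^2+(p^2q^2-p^2-q^2+2) > E(\mathcal{B}(G)),
\]
so $\mathcal{B}(G)$ is hypoenergetic.

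\textbf{Not hyperenergetic.} We have $E(K_{p^2q^2+4}) = 2(p^2q^2+3) > p^2q^2+4 > E(\mathcal{B}(G))$.

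\textbf{Not L- or Q-hyperenergetic.} Here $LE(K_{p^2q^2+4}) = LE^+(K_{p^2q^2+4}) = 2p^2q^2+6$, while Theorem \ref{all_energy_cyclic_pq} gives $LE = LE^+ = \frac{2p^4q^4+32}{p^2q^2+4}$. Putting the difference over a common denominator,
\[
2p^2q^2+6-\frac{2p^4q^4+32}{p^2q^2+4} = \frac{14p^2q^2-8}{p^2q^2+4} > 0.
\]

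\textbf{Not CN-hyperenergetic.} We have $E_{CN}(K_{p^2q^2+4}) = 2(p^2q^2+3)(p^2q^2+2)$, which clearly exceeds $E_{CN}(\mathcal{B}(G)) = 2p^2q^2-8$.

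The only real care needed is the algebra in the $LE$ comparison and the strictness of each AM--GM inequality. Strictness holds because $p^2-1\ge 3$, $q^2-1\ge 3$ and $(p^2-1)(q^2-1)>1$, so none of these star sizes equals $1$. Otherwise the argument is routine.
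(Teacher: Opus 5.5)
Your proposal is correct and follows essentially the same route as the paper. It uses the componentwise inequality $n+1>2\sqrt{n}$ on each star to get $|V(\mathcal{B}(G))|=p^2q^2+4>E(\mathcal{B}(G))$. It then compares directly with $E$, $LE=LE^+$ and $E_{CN}$ of $K_{p^2q^2+4}$, including the same identity $2p^2q^2+6-\frac{2p^4q^4+32}{p^2q^2+4}=\frac{14p^2q^2-8}{p^2q^2+4}$.
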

\begin{proof}
	By Theorem \ref{structure_cyclic_pq} and Theorem \ref{all_energy_cyclic_pq}, we have 
	\[
	|V(\mathcal{B}(G))|=p^2q^2+4 \quad \text{ and} \quad E(\mathcal{B}(G))=2+2\sqrt{p^2-1}+2\sqrt{q^2-1}+2\sqrt{(p^2-1)(q^2-1)}.
	\]
	Since $p^2>2\sqrt{p^2-1}, q^2>2\sqrt{q^2-1}$ and $p^2q^2-p^2-q^2+2 >2\sqrt{p^2q^2-p^2-q^2+1}=2\sqrt{(p^2-1)(q^2-1)}$ \,\, we have 
	\[
	p^2+q^2+p^2q^2-p^2-q^2+2+2 > 2+2\sqrt{p^2-1}+2\sqrt{q^2-1}+2\sqrt{(p^2-1)(q^2-1)}.
	\]
	Therefore,
	\begin{equation} \label{cyclic-pq-hypo}
		|V(\mathcal{B}(G))|> E(\mathcal{B}(G)).
	\end{equation}	 
	Thus, $\mathcal{B}(G)$ is hypoenergetic. 
	
	We have $E(K_{|V(\mathcal{B}(G))|}) = E(K_{p^2q^2+4})= 2(p^2q^2+4-1)=2p^2q^2+6 >p^2q^2+4= |V(\mathcal{B}(G))|> E(\mathcal{B}(G))$ (using \eqref{cyclic-pq-hypo}). Therefore,  $\mathcal{B}(G)$ is not hyperenergetic.

	Also,     $LE(K_{p^2q^2+4})=LE^+(K_{p^2q^2+4})=2p^2q^2+6$. From Theorem \ref{all_energy_cyclic_pq}, we have $LE(\mathcal{B}(G))$ $=LE^+(\mathcal{B}(G))=\frac{2p^4q^4+32}{p^2q^2+4}$. Now, 
	\begin{align*}
		LE(K_{p^2q^2+4})-LE(\mathcal{B}(G))&= LE^+(K_{p^2q^2+4})-LE^+(\mathcal{B}(G))\\ &=2p^2q^2+6-\frac{2p^4q^4+32}{p^2q^2+4} 
		= \frac{14p^2q^2-8}{p^2q^2+4} > 0.
	\end{align*}
	Therefore, 
	$
	LE^+(K_{p^2q^2+4}) = LE(K_{p^2q^2+4}) > LE(\mathcal{B}(G))=LE^+(\mathcal{B}(G)).
	$
	Hence,  $\mathcal{B}(G)$ is neither L-hyperenergetic nor  Q-hyperenergetic.

	We have  $E_{CN}(K_{p^2q^2+4})=2(p^2q^2+4-1)(p^2q^2+4-2)=(2p^2q^2+6)(p^2q^2+2) > 2p^2q^2-8 = E_{CN}(\mathcal{B}(G))$ (using Theorem \ref{all_energy_cyclic_pq}). 
	Hence, $\mathcal{B}(G)$ is not CN-hyperenergetic. This completes the proof.
\end{proof}
\begin{theorem}
	If $G$ is a cyclic group of order $pq$, where $p$ and $q$ are two primes such that $p <q$, then $E(\mathcal{B}(G))<LE(\mathcal{B}(G))$.
\end{theorem}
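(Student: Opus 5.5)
The plan is to follow the same template as the proof of the corresponding result for cyclic groups of order $p^n$: insert $|V(\mathcal{B}(G))|$ between the two energies. By Theorem \ref{all_energy_comparison_cyclic_pq}, specifically inequality \eqref{cyclic-pq-hypo}, we already know that $|V(\mathcal{B}(G))| = p^2q^2+4 > E(\mathcal{B}(G))$. So it suffices to show $LE(\mathcal{B}(G)) \geq |V(\mathcal{B}(G))|$; strictness then comes from the hypoenergetic inequality.

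For the remaining inequality, take $LE(\mathcal{B}(G)) = \frac{2p^4q^4+32}{p^2q^2+4}$ from Theorem \ref{all_energy_cyclic_pq} and subtract $p^2q^2+4$. Over the common denominator $p^2q^2+4$ this gives
\[
LE(\mathcal{B}(G)) - |V(\mathcal{B}(G))| = \frac{2p^4q^4+32-(p^2q^2+4)^2}{p^2q^2+4} = \frac{p^4q^4-8p^2q^2+16}{p^2q^2+4} = \frac{(p^2q^2-4)^2}{p^2q^2+4}.
\]
Since $p<q$ are primes, $pq \geq 6$, so $p^2q^2-4>0$ and the difference is strictly positive. Chaining the two inequalities yields $LE(\mathcal{B}(G)) > |V(\mathcal{B}(G))| > E(\mathcal{B}(G))$, which is the claim.

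There is no real obstacle here. The only point needing care is the algebraic simplification of the numerator into the perfect square $(p^2q^2-4)^2$. A backup route, if the closed form of $LE$ were in doubt, is to compare termwise: the eigenvalue contributions $|1-\tfrac{2m}{n}|$, with multiplicity $p^2q^2-4$, together with the large eigenvalue terms already dominate the four square-root terms of $E(\mathcal{B}(G))$. This uses the same inequalities $x+1 > 2\sqrt{x}$ employed in the proof of Theorem \ref{all_energy_comparison_cyclic_pq}.
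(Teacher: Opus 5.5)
Your proof is correct and follows the paper's argument exactly. You sandwich $|V(\mathcal{B}(G))|=p^2q^2+4$ between $E(\mathcal{B}(G))$ and $LE(\mathcal{B}(G))=\frac{2p^4q^4+32}{p^2q^2+4}$, using the hypoenergetic inequality, and writing the numerator as $(p^2q^2-4)^2$ is a slightly cleaner positivity check than the paper's $p^2q^2(p^2q^2-8)+16$.
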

\begin{proof}
	By Theorem \ref{all_energy_cyclic_pq}, we get
	$E(\mathcal{B}(G)) =2+2\sqrt{p^2-1}(1+\sqrt{q^2-1})+2\sqrt{q^2-1}$ and	$LE(\mathcal{B}(G))  =\frac{2p^4q^4+32}{p^2q^2+4}$. Also, from Theorem \ref{all_energy_comparison_cyclic_pq}, we have $|V(\mathcal{B}(G))|=p^2q^2+4> E(\mathcal{B}(G))$. Now, for all $p, q$, we have
	\begin{align*}
		LE(\mathcal{B}(G))-|V(\mathcal{B}(G))|&=\frac{2p^4q^4+32}{p^2q^2+4}-p^2q^2+4 \\
		&= \frac{p^4q^4-8p^2q^2+16}{p^2q^2+4} 
		= \frac{p^2q^2(p^2q^2-8)+16}{p^2q^2+4} >0.
	\end{align*}
	Hence, $LE(\mathcal{B}(G))>|V(\mathcal{B}(G))|>E(\mathcal{B}(G))$.
\end{proof}

\begin{theorem}\label{all_spec_cyclic_p^2q}
If $G$ is a cyclic group of order $p^2q$, where $p$ and $q$ are two distinct primes, then
    \begin{align*}
		\Spec(\mathcal{B}(G))=&\left\lbrace(0)^{p^4q^2-6}, (\pm 1)^1,  \left(\pm\sqrt{p^2-1}\right)^1,  \left(\pm\sqrt{q^2-1}\right)^1, \left(\pm p\sqrt{p^2-1}\right)^1,\right. \\
		& \qquad\qquad\qquad\qquad\left. \left(\pm\sqrt{(p^2-1)(q^2-1)}\right)^1, \left(\pm p\sqrt{(p^2-1)(q^2-1)}\right)^1 \right\rbrace, 
	\end{align*}        
	\begin{align*}
	\L-Spec(\mathcal{B}(G))&=\left\lbrace(0)^{6}, (1)^{p^4q^2-6}, (2)^1, (p^2)^1, (q^2)^1, (p^4-p^2+1)^1, (p^2q^2-p^2-q^2+2)^1, \right. \\
    & \qquad \qquad \qquad \qquad \qquad \left. (p^4q^2-p^2q^2-p^4+p^2+1)^1\right\rbrace = \Q-Spec(\mathcal{B}(G))
	\end{align*}
and \quad $\CN-Spec(\mathcal{B}(G)) \quad = \, \lbrace(0)^{7}, \, \,\, (-1)^{p^4q^2-6}, \, \,\, (p^2-2)^1, \, \, \, (q^2-2)^1, \, \,\, (p^4-p^2-1)^1$,  

\noindent $(p^2q^2-p^2-q^2)^1, (p^4q^2-p^2q^2-p^4+p^2-1)^1\rbrace$.
%\begin{align*}
%\CN-Spec(\mathcal{B}(G))&=\lbrace(0)^{7}, (-1)^{p^4q^2-6},(p^2-2)^1, (q^2-2)^1, (p^4-p^2-1)^1, \\
%        & \qquad \qquad (p^2q^2-p^2-q^2)^1, (p^4q^2-p^2q^2-p^4+p^2-1)^1\rbrace.
%\end{align*}
\end{theorem}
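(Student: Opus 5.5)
The plan follows the proofs of Theorems \ref{all_spec_cyclic_p^n} and \ref{all_spec_cyclic_pq}. First, invoke Theorem \ref{structure_cyclic_p^2q}, which gives
\[
\mathcal{B}(G)=K_2 \sqcup K_{1, p^2-1} \sqcup K_{1, p^4-p^2} \sqcup K_{1, q^2-1} \sqcup K_{1, p^2q^2-p^2-q^2+1} \sqcup K_{1, p^4q^2-p^2q^2-p^4+p^2}.
\]
The adjacency, Laplacian, signless Laplacian and common neighbourhood matrices of a disjoint union are block diagonal, with one block per component. Hence each of the four spectra of $\mathcal{B}(G)$ is the multiset union of the corresponding spectra of the six components.

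Next, apply Lemma \ref{all_spec_of_star} to each star $K_{1,n}$ with the appropriate value of $n$. For $K_2$, read $n=1$, or compute directly: $\Spec(K_2)=\{(\pm1)^1\}$, $\L-Spec(K_2)=\{(0)^1,(2)^1\}$ and $\CN-Spec(K_2)=\{(0)^2\}$. Several values of $n$ factor, which gives the square roots in the statement:
\begin{itemize}
\item $p^4-p^2=p^2(p^2-1)$, so $\sqrt{p^4-p^2}=p\sqrt{p^2-1}$;
\item $p^2q^2-p^2-q^2+1=(p^2-1)(q^2-1)$;
\item $p^4q^2-p^2q^2-p^4+p^2=p^2(p^2-1)(q^2-1)$.
\end{itemize}
The nonzero adjacency eigenvalues are therefore $\pm\sqrt{n}$ as listed. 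The largest L-eigenvalue of each star is $n+1$, which gives $p^2$, $p^4-p^2+1$, $q^2$, $p^2q^2-p^2-q^2+2$ and $p^4q^2-p^2q^2-p^4+p^2+1$. The largest CN-eigenvalue is $n-1$, which gives the listed values. The equality $\L-Spec=\Q-Spec$ follows from Lemma \ref{all_spec_of_star}, as in the Remark.

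The only real step is counting multiplicities. The star degrees sum to $|G|^2-1=p^4q^2-1$.
\begin{itemize}
\item \emph{Adjacency eigenvalue $0$.} Each $K_{1,n}$ contributes $n-1$, so the total is $(p^4q^2-1)-5=p^4q^2-6$.
\item \emph{Laplacian eigenvalue $0$.} Each of the $6$ components contributes one, giving $6$.
\item \emph{Laplacian eigenvalue $1$.} The count is again $\sum(n-1)=p^4q^2-6$.
\item \emph{CN eigenvalue $0$.} $K_2$ contributes two and each star contributes one, giving $2+5=7$.
\item \emph{CN eigenvalue $-1$.} The count is $p^4q^2-6$.
\end{itemize}

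As a sanity check, the total number of eigenvalues in each spectrum should be $|V(\mathcal{B}(G))|=p^4q^2+6$. For the adjacency spectrum, $(p^4q^2-6)+12=p^4q^2+6$. For the CN-spectrum, $7+(p^4q^2-6)+5$ also equals $p^4q^2+6$. This bookkeeping is the main place an error could occur, so I will verify each count against the vertex total as above. I also note that for small primes some listed values could coincide, for example $q^2-2=p^4-p^2-1$ or a value equal to $2$. In that case their multiplicities merge, exactly as the paper handles analogous cases in Theorems \ref{all_spec_cyclic_p^n} and \ref{all_spec_cyclic_pq}.
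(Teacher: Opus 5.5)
Your proposal is correct and matches the paper's proof: you decompose $\mathcal{B}(G)$ via Theorem~\ref{structure_cyclic_p^2q} into $K_2$ and five stars, apply Lemma~\ref{all_spec_of_star} to each component, and take multiset unions, and your multiplicity counts (which the paper leaves implicit) are all correct. Your worry about merging eigenvalues is moot, and the paper does not address it because it cannot arise: the star sizes $p^2-1$, $p^2(p^2-1)$, $q^2-1$, $(p^2-1)(q^2-1)$, $p^2(p^2-1)(q^2-1)$ are pairwise distinct and all at least $3$, so every listed value has exactly the stated multiplicity. For instance, your example $q^2=p^4-p^2+1$ is impossible, since that number lies strictly between $(p^2-1)^2$ and $p^4$.
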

\begin{proof}
    From Theorem \ref{structure_cyclic_p^2q}, we have \, $\, \mathcal{B}(G)\, =\, K_2 \, \sqcup \, K_{1, p^2-1} \, \sqcup \, K_{1, p^4-p^2} \, \sqcup \, K_{1, q^2-1} \, \sqcup$ \, $K_{1, p^2q^2-p^2-q^2+1} \, \sqcup \, K_{1, p^4q^2-p^2q^2-p^4+p^2}$. Now, by Lemma \ref{all_spec_of_star}, we get 
\begin{center}
    $\Spec(K_2)=\{(\pm 1)^1\}, \, \Spec(K_{1, p^2-1})=\left\{(0)^{p^2-2}, (\pm\sqrt{p^2-1})^1\right\},$
     $\Spec(K_{1, p^4-p^2})=\left\{(0)^{p^4-p^2-1}, (\pm p\sqrt{p^2-1})^1\right\},$
    $\Spec(K_{1, q^2-1})=\left\{(0)^{q^2-2}, (\pm\sqrt{q^2-1})^1\right\},$ 
    $\Spec(K_{1, p^2q^2-p^2-q^2+1})=\left\{(0)^{p^2q^2-p^2-q^2}, (\pm\sqrt{(p^2-1)(q^2-1)})^1\right\}$ 
\end{center}
    and 
    $\Spec(K_{1, p^4q^2-p^2q^2-p^4+p^2})=\left\{(0)^{p^4q^2-p^2q^2-p^4+p^2-1}, (\pm p\sqrt{(p^2-1)(q^2-1)})^1\right\}$.
   
Since $\Spec(\mathcal{B}(G))$ is the union of the above multi-sets, therefore we get the desired expression of $\Spec(\mathcal{B}(G))$. Also, by Lemma \ref{all_spec_of_star}, we get
\begin{center}
    $\L-Spec(K_2)=\{(0)^1, (2)^1\},$ $ \L-Spec(K_{1, p^2-1})=\left\{ (0)^1, (1)^{p^2-2}, (p^2)^1\right\},$ $ \L-Spec(K_{1, p^4-p^2})=\left\{ (0)^1, (1)^{p^4-p^2-1}, (p^4-p^2+1)^1\right\},$ $\L-Spec(K_{1, q^2-1})=\left\{ (0)^1, (1)^{q^2-2}, (q^2)^1\right\}, $ 
$\L-Spec(K_{1, p^2q^2-p^2-q^2+1})=
    \left\{(0)^1, (1)^{p^2q^2-p^2-q^2}, (p^2q^2-p^2-q^2+2)^1\right\}$
\end{center}    and
$\L-Spec(K_{1, p^4q^2-p^2q^2-p^4+p^2})
    =\left\{\!(0)^1, (1)^{p^4q^2-p^2q^2-p^4+p^2-1}, (p^4q^2-p^2q^2-p^4+p^2+1)^1\right\}$. 
 
Since $\L-Spec(\mathcal{B}(G))$ is the union of the above multi-sets,  we get the desired expression for $\L-Spec(\mathcal{B}(G))$  and hence the expression for $\Q-Spec(\mathcal{B}(G))$. Further, we have
\begin{center}
    $\CN-Spec(K_2)=\{(0)^2\}, \, \CN-Spec(K_{1, p^2-1})=\left\{(0)^1, (-1)^{p^2-2}, (p^2-2)^1\right\},$ $\CN-Spec(K_{1, p^4-p^2})=\left\{(0)^1, (-1)^{p^4-p^2-1}, (p^4-p^2-1)^1\right\},$ 
    $\CN-Spec(K_{1, q^2-1})=\left\{(0)^1, (-1)^{q^2-2}, (q^2-2)^1\right\},$ $\CN-Spec(K_{1, p^2q^2-p^2-q^2+1})=\left\{(0)^1, (-1)^{p^2q^2-p^2-q^2}, (p^2q^2-p^2-q^2)^1\right\}$ 
\end{center}
and $ \CN-Spec(K_{1, p^4q^2-p^2q^2-p^4+p^2}) =$
\begin{align*}
    \left\{(0)^1, (-1)^{p^4q^2-p^2q^2-p^4+p^2-1}, 
     (p^4q^2-p^2q^2-p^4+p^2-1)^1\right\}.
\end{align*} 
Since $\CN-Spec(\mathcal{B}(G))$ is the union of the above multi-sets,  we get the desired expression for $\CN-Spec(\mathcal{B}(G))$.
\end{proof}
\begin{theorem}\label{all_energy_cyclic_p^2q}
	If $G$ is a cyclic group of order $p^2q$, where $p$ and $q$ are two distinct primes, then
	$E(\mathcal{B}(G))= 2+2\sqrt{p^2-1}((1+p)(1+\sqrt{q^2-1}))+2\sqrt{q^2-1}$,
	$LE(\mathcal{B}(G))= LE^+(\mathcal{B}(G))=\frac{2p^8q^4+72}{p^4q^2+6}$ 
	\quad  and \,
	$E_{CN}(\mathcal{B}(G))=2p^4q^2-12$.
\end{theorem}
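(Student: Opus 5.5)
We follow exactly the pattern of the proof of Theorem \ref{all_energy_cyclic_pq}, now using the spectra in Theorem \ref{all_spec_cyclic_p^2q}.

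\emph{Energy.} Summing the absolute values of the nonzero adjacency eigenvalues gives
\[
E(\mathcal{B}(G))=2+2\sqrt{p^2-1}+2p\sqrt{p^2-1}+2\sqrt{q^2-1}+2\sqrt{(p^2-1)(q^2-1)}+2p\sqrt{(p^2-1)(q^2-1)}.
\]
Grouping the terms containing $\sqrt{p^2-1}$ as $2\sqrt{p^2-1}(1+p)(1+\sqrt{q^2-1})$ yields the stated expression.

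\emph{CN-energy.} The eigenvalue $-1$ contributes $p^4q^2-6$. The positive CN-eigenvalues are
\[
(p^2-2)+(q^2-2)+(p^4-p^2-1)+(p^2q^2-p^2-q^2)+(p^4q^2-p^2q^2-p^4+p^2-1).
\]
This telescopes to $p^4q^2-6$, so $E_{CN}(\mathcal{B}(G))=2p^4q^2-12$.

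\emph{Laplacian energy.} By Theorem \ref{structure_cyclic_p^2q}, $n=|V(\mathcal{B}(G))|=p^4q^2+6$ and $m=p^4q^2$. Write $N=p^4q^2$, so that $\frac{2m}{n}=\frac{2N}{N+6}$, which lies strictly between $1$ and $2$. The contributions are as follows.
\begin{itemize}
\item The eigenvalue $0$ (multiplicity $6$) contributes $\frac{12N}{N+6}$.
\item The eigenvalue $1$ (multiplicity $N-6$) contributes $\frac{(N-6)^2}{N+6}$.
\item The eigenvalue $2$ contributes $\frac{12}{N+6}$.
\item The five remaining eigenvalues $\lambda_i = d_i+1$, where the $d_i$ are the star sizes $p^2-1,\dots$, all exceed $2$. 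They therefore contribute $\sum(d_i+1)-\frac{10N}{N+6}$.
\end{itemize}
Since the star sizes together with the $K_2$ edge sum to $N$, we have $\sum d_i = N-1$, so $\sum(d_i+1)=N+4$.

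Adding everything,
\[
LE=\frac{12N+(N-6)^2+12+(N+4)(N+6)-10N}{N+6}=\frac{2N^2+72}{N+6},
\]
which is $\frac{2p^8q^4+72}{p^4q^2+6}$. By the Remark, $LE^+=LE$.

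The only real care needed is checking that each large Laplacian eigenvalue exceeds $\frac{2m}{n}$ and that the bookkeeping $\sum d_i = N-1$ is correct. Both are immediate because every $\lambda_i \ge p^2 \ge 4 > 2$.
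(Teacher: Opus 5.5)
Your proposal is correct and follows the paper's route: you sum absolute values over the spectra in Theorem \ref{all_spec_cyclic_p^2q}. For $LE$, instead of computing each $|\lambda-\frac{2m}{n}|$ term by term as the paper does, you group the five large eigenvalues and use the edge count $\sum d_i=N-1$, which also makes explicit the final simplification the paper leaves implicit.

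One small slip: $p,q$ are unordered here, so when $q<p$ the eigenvalue $q^2$ is smaller than $p^2$ and the claim $\lambda_i\ge p^2$ fails. The justification should read $\lambda_i\ge\min(p^2,q^2)\ge 4>2$, which is all you need.
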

\begin{proof}
	From Theorem \ref{all_spec_cyclic_p^2q} and definition of $E(\mathcal{B}(G))$ and $E_{CN}(\mathcal{B}(G))$, we have
	\begin{align*}
		E(\mathcal{B}(G))=2+2\sqrt{p^2-1}&+2\sqrt{q^2-1}+2p\sqrt{p^2-1} \\
		&  +2\sqrt{(p^2-1)(q^2-1)} +2p\sqrt{(p^2-1)(q^2-1)}
%		&= 2+2\sqrt{p^2-1}((1+p)(1+\sqrt{q^2-1}))+2\sqrt{q^2-1} \qquad \text{ and }
	\end{align*}
and
$E_{CN}(\mathcal{B}(G))=(\underbrace{1+1+\cdots+1}
		_{(p^4q^2-6)\text{-times}})+p^2-2+q^2-2+p^4-p^2-1  +p^2q^2-p^2-q^2+p^4q^2-p^2q^2-p^4+p^2-1$.
%		&= 2p^4q^2-12.
%
Thus we get the required expressions for $E(\mathcal{B}(G))$ and $E_{CN}(\mathcal{B}(G))$.

	Further, from Theorem \ref{all_spec_cyclic_p^2q} and Theorem \ref{structure_cyclic_p^2q}, we have 
	
\noindent	$\L-Spec(\mathcal{B}(G))= \Q-Spec(\mathcal{B}(G))=\lbrace(0)^{6}, (1)^{p^4q^2-6}, (2)^1, (p^2)^1, (q^2)^1, (p^4-p^2+1)^1, (p^2q^2-p^2-q^2+2)^1, (p^4q^2-p^2q^2-p^4+p^2+1)^1\rbrace $ 
	
\noindent	and $\frac{2m}{n}=\frac{2p^4q^2}{p^4q^2+6}$. Now, $|0-\frac{2p^4q^2}{p^4q^2+6}|=\frac{2p^4q^2}{p^4q^2+6}$, $|1-\frac{2p^4q^2}{p^4q^2+6}|=\frac{p^4q^2-6}{p^4q^2+6}$, $|2-\frac{2p^4q^2}{p^4q^2+6}|=\frac{12}{p^4q^2+6}$, $|p^2-\frac{2p^4q^2}{p^4q^2+6}|=\frac{p^6q^2+6p^2-2p^4q^2}{p^4q^2+6}$, $|q^2-\frac{2p^4q^2}{p^4q^2+6}|=\frac{p^4q^4+6q^2-2p^4q^2}{p^4q^2+6}$, $|(p^4-p^2+1)-\frac{2p^4q^2}{p^4q^2+6}|=\frac{p^8q^2-p^6q^2-p^4q^2+6p^4-6p^2+6}{p^4q^2+6}$, $|(p^2q^2-p^2-q^2+2)-\frac{2p^4q^2}{p^4q^2+6}|=\frac{p^6q^4-p^6q^2-p^4q^4+6p^2q^2-6p^2-6q^2+12}{p^4q^2+6}$ \, and $|(p^4q^2-p^2q^2-p^4+p^2+1)-\frac{2p^4q^2}{p^4q^2+6}|=\frac{p^8q^4-p^6q^4-p^8q^2+p^6q^2+5p^4q^2-6p^2q^2-6p^4+6p^2+6}{p^4q^2+6}$. Therefore, by definition of (signless) Laplacian energy, we have
	\begin{align*}
		LE(\mathcal{B}(G))&= LE^+(\mathcal{B}(G))\\
		& = 6 \times \frac{2p^4q^2}{p^4q^2+6}+(p^4q^2-6)\frac{p^4q^2-6}{p^4q^2+6}+\frac{12}{p^4q^2+6}+\frac{p^6q^2+6p^2-2p^4q^2}{p^4q^2+6} \\
		& \qquad +\frac{p^4q^4+6q^2-2p^4q^2}{p^4q^2+6}+\frac{p^8q^2-p^6q^2-p^4q^2+6p^4-6p^2+6}{p^4q^2+6} \\
		& \qquad +\frac{p^6q^4-p^6q^2-p^4q^4+6p^2q^2-6p^2-6q^2+12}{p^4q^2+6} \\
		& \qquad + \frac{p^8q^4-p^6q^4-p^8q^2+p^6q^2+5p^4q^2-6p^2q^2-6p^4+6p^2+6}{p^4q^2+6}.
%		&=\frac{2p^8q^4+72}{p^4q^2+6}.
	\end{align*}
Hence, we get the required expression.
%	This completes the proof.
\end{proof}

\begin{theorem}\label{all_energy_comparison_cyclic_p^2q}
	If $G$ is a cyclic group of order $p^2q$, where $p$ and $q$ are two distinct primes, then $\mathcal{B}(G)$ is hypoenergetic but not hyperenergetic,  L-hyperenergetic, Q-hyperenergetic and CN-hyperenergetic.
\end{theorem}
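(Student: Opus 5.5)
We follow the same template as Theorems \ref{all_energy_comparison_cyclic_p^n} and \ref{all_energy_comparison_cyclic_pq}. By Theorem \ref{structure_cyclic_p^2q}, $|V(\mathcal{B}(G))| = p^4q^2 + 6$. By Theorem \ref{all_energy_cyclic_p^2q}, $E(\mathcal{B}(G))$ is the sum of six terms $2\sqrt{d}$, one for each star degree $d \in \{1,\, p^2-1,\, p^4-p^2,\, q^2-1,\, p^2q^2-p^2-q^2+1,\, p^4q^2-p^2q^2-p^4+p^2\}$ (the first comes from $K_2$).

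\textbf{Hypoenergetic.} For each $d \geq 1$ we have $d+1 \geq 2\sqrt{d}$, since $(\sqrt d - 1)^2 \geq 0$, with strict inequality when $d>1$. Summing over the six components gives $\sum (d+1) = |V(\mathcal{B}(G))|$. This holds because the stars partition the $p^4q^2$ pairs among the $6$ centres. Hence $|V(\mathcal{B}(G))| > E(\mathcal{B}(G))$, the strictness coming from $p^2-1>1$.

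\textbf{Not hyperenergetic.} We have $E(K_{p^4q^2+6}) = 2p^4q^2+10 > p^4q^2+6 > E(\mathcal{B}(G))$.

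\textbf{Not L- or Q-hyperenergetic.} We have $LE(K_{p^4q^2+6}) = LE^+(K_{p^4q^2+6}) = 2p^4q^2+10$, and $LE(\mathcal{B}(G)) = LE^+(\mathcal{B}(G)) = \frac{2p^8q^4+72}{p^4q^2+6}$. Putting $x = p^4q^2$, the difference is
\[
2x+10-\frac{2x^2+72}{x+6} = \frac{(2x+10)(x+6) - 2x^2 - 72}{x+6} = \frac{22x - 12}{x+6} > 0 .
\]

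\textbf{Not CN-hyperenergetic.} We have $E_{CN}(K_{x+6}) = 2(x+5)(x+4)$, which clearly exceeds $E_{CN}(\mathcal{B}(G)) = 2x-12$.

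The only step requiring any care is the hypoenergetic inequality, and the $d+1 \geq 2\sqrt d$ argument handles it. Everything else is a one-line rational simplification.
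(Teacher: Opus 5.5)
Your proof is correct and follows the paper's argument essentially step for step. The paper also obtains hypoenergeticity by comparing each star's vertex count $d+1$ with its energy contribution $2\sqrt{d}$: it lists the inequalities $p^2>2\sqrt{p^2-1}$, $p^4-p^2+1>2p\sqrt{p^2-1}$, and so on, which your $(\sqrt d-1)^2\ge 0$ handles uniformly. It then uses exactly your comparisons $2p^4q^2+10>p^4q^2+6$, $\frac{22p^4q^2-12}{p^4q^2+6}>0$ and $(2p^4q^2+10)(p^4q^2+4)>2p^4q^2-12$ for the remaining three claims.
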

\begin{proof}
	By Theorem \ref{structure_cyclic_p^2q} and Theorem \ref{all_energy_cyclic_p^2q}, we have $|V(\mathcal{B}(G))|=p^4q^2+6$ and $E(\mathcal{B}(G))=2+2\sqrt{p^2-1}+2\sqrt{q^2-1}+2p\sqrt{p^2-1}+2\sqrt{(p^2-1)(q^2-1)}+2p\sqrt{(p^2-1)(q^2-1)}$.
	Since $p^2>2\sqrt{p^2-1}, q^2>2\sqrt{q^2-1}$, $p^4-p^2+1>2\sqrt{p^4-p^2}=2p\sqrt{p^2-1}$, $p^2q^2-p^2-q^2+2 >2\sqrt{p^2q^2-p^2-q^2+1}=2\sqrt{(p^2-1)(q^2-1)}$ and $p^4q^2-p^2q^2-p^4+p^2+1>2\sqrt{p^4q^2-p^2q^2-p^4+p^2}=2p\sqrt{(p^2-1)(q^2-1)}$ we have 
	\begin{align*}
		p^2+q^2+p^4-p^2+1&+p^2q^2-p^2-q^2+2+p^4q^2-p^2q^2-p^4+p^2+1+2 \\
		&> 2+2\sqrt{p^2-1}+2\sqrt{q^2-1}+2p\sqrt{p^2-1}+2\sqrt{(p^2-1)(q^2-1)} \\
		& \qquad \qquad \qquad \qquad \qquad \qquad \qquad \qquad \qquad +2p\sqrt{(p^2-1)(q^2-1)}.
	\end{align*}
	Therefore,
	\begin{equation} \label{cyclic-p^2q-hypo}
		|V(\mathcal{B}(G))|> E(\mathcal{B}(G)).
	\end{equation}	 
	Thus, $\mathcal{B}(G)$ is hypoenergetic. 
	
	We have $E(K_{|V(\mathcal{B}(G))|}) = E(K_{p^4q^2+6})= 2(p^4q^2+6-1)=2p^4q^2+10>p^4q^2+6= |V(\mathcal{B}(G))|> E(\mathcal{B}(G))$ (using \eqref{cyclic-p^2q-hypo}). Therefore, $\mathcal{B}(G)$ is not hyperenergetic.

	Also, $LE(K_{p^4q^2+6})=LE^+(K_{p^4q^2+6})=2p^4q^2+10$. From Theorem \ref{all_energy_cyclic_p^2q}, we have $LE(\mathcal{B}(G))$ $=LE^+(\mathcal{B}(G))=\frac{2p^8q^4+72}{p^4q^2+6}$. Now, 
	\begin{align*}
		LE(K_{p^4q^2+6})-LE(\mathcal{B}(G))&= LE^+(K_{p^4q^2+6})-LE^+(\mathcal{B}(G))\\ &=2p^4q^2+10-\frac{2p^8q^4+72}{p^4q^2+6} 
		= \frac{22p^4q^2-12}{p^4q^2+6} > 0.
	\end{align*}
	Therefore, 
	$
	LE^+(K_{p^4q^2+6}) = LE(K_{p^4q^2+6}) > LE(\mathcal{B}(G))=LE^+(\mathcal{B}(G)).
	$
	Hence,  $\mathcal{B}(G)$ is neither L-hyperenergetic nor  Q-hyperenergetic.
	
	We have  $E_{CN}(K_{p^4q^2+6})=2(p^4q^2+6-1)(p^4q^2+6-2)=(2p^4q^2+10)(p^4q^2+4) > 2p^4q^2-12 = E_{CN}(\mathcal{B}(G))$ (using Theorem \ref{all_energy_cyclic_p^2q}). 
	Hence, $\mathcal{B}(G)$ is not CN-hyperenergetic. This completes the proof.
\end{proof}
\begin{theorem}
	If $G$ is a cyclic group of order $p^2q$, where $p$ and $q$ are two distinct primes, then $E(\mathcal{B}(G))<LE(\mathcal{B}(G))$.
\end{theorem}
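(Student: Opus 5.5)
The plan is to follow exactly the pattern used for the cyclic groups of order $p^n$ and $pq$: sandwich $|V(\mathcal{B}(G))|$ between the two energies. By Theorem \ref{all_energy_cyclic_p^2q} we have $E(\mathcal{B}(G)) = 2+2\sqrt{p^2-1}((1+p)(1+\sqrt{q^2-1}))+2\sqrt{q^2-1}$ and $LE(\mathcal{B}(G)) = \frac{2p^8q^4+72}{p^4q^2+6}$. Theorem \ref{all_energy_comparison_cyclic_p^2q} already gives the inequality \eqref{cyclic-p^2q-hypo}, namely $|V(\mathcal{B}(G))| = p^4q^2+6 > E(\mathcal{B}(G))$. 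So it only remains to show that $LE(\mathcal{B}(G)) > |V(\mathcal{B}(G))|$.

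For this step I would put everything over the common denominator $p^4q^2+6$. Writing $x = p^4q^2$, we get
\[
LE(\mathcal{B}(G)) - |V(\mathcal{B}(G))| = \frac{2x^2+72 - (x+6)^2}{x+6} = \frac{x^2-12x+36}{x+6} = \frac{(x-6)^2}{x+6}.
\]
Since $p$ and $q$ are distinct primes, $x = p^4q^2 \geq 2^4\cdot 3^2 = 144 > 6$. Hence the numerator is strictly positive, and equivalently $p^4q^2(p^4q^2-12)+36 > 0$, matching the style of the earlier proofs.

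Combining the two inequalities gives $LE(\mathcal{B}(G)) > |V(\mathcal{B}(G))| > E(\mathcal{B}(G))$, which proves the statement. There is no real obstacle: the only care needed is the algebraic simplification. It is fortunate that the difference is a perfect square over a positive denominator, so that no case split on whether $p<q$ or $p>q$ is required.
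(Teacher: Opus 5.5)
Your proposal is correct and follows the paper's proof exactly: the same sandwich $E(\mathcal{B}(G)) < |V(\mathcal{B}(G))| = p^4q^2+6 < LE(\mathcal{B}(G))$, with the first inequality taken from the hypoenergeticity result and the second from the same common-denominator computation. Your observation that the numerator equals $(p^4q^2-6)^2$ is just a tidier form of the paper's $p^4q^2(p^4q^2-12)+36$, and you correctly subtract $p^4q^2+6$, where the paper's displayed line has a sign slip.
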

\begin{proof}
	By Theorem \ref{all_energy_cyclic_p^2q}, we get
	$E(\mathcal{B}(G)) =2+2\sqrt{p^2-1}((1+p)(1+\sqrt{q^2-1}))+2\sqrt{q^2-1}$ and	$LE(\mathcal{B}(G)) =\frac{2p^8q^4+72}{p^4q^2+6}$. Also, from Theorem \ref{all_energy_comparison_cyclic_p^2q}, we have $|V(\mathcal{B}(G))|=p^4q^2+6> E(\mathcal{B}(G))$. Now, for all $p, q \geq 2$, we have
	\begin{align*}
		LE(\mathcal{B}(G))-|V(\mathcal{B}(G))|&=\frac{2p^8q^4+72}{p^4q^2+6}-p^4q^2+6 \\
		&= \frac{p^8q^4-12p^4q^2+36}{p^4q^2+6} 
		= \frac{p^4q^2(p^4q^2-12)+36}{p^4q^2+6} >0.
	\end{align*}
	Hence, $LE(\mathcal{B}(G))>|V(\mathcal{B}(G))|>E(\mathcal{B}(G))$.
\end{proof}

\begin{theorem}\label{all_spec_cyclic_p^2q^2}
If $G$ is a cyclic group of order $p^2q^2$, where $p$ and $q$ are two  primes such that $p < q$, then
    \begin{align*}
		\Spec(\mathcal{B}(G))=&\left\lbrace(0)^{p^4q^4-9}, (\pm 1)^1,  \left(\pm\sqrt{p^2-1}\right)^1,  \left(\pm\sqrt{q^2-1}\right)^1, \left(\pm p\sqrt{p^2-1}\right)^1,\right. \\
		& \left. \left(\pm q\sqrt{q^2-1}\right)^1,  
        \left(\pm\sqrt{(p^2-1)(q^2-1)}\right)^1,  \left(\pm p\sqrt{(p^2-1)(q^2-1)}\right)^1, \right. \\
        & \left.   \left(\pm q\sqrt{(p^2-1)(q^2-1)}\right)^1,  \left(\pm pq\sqrt{(p^2-1)(q^2-1)}\right)^1 \right\rbrace, 
	\end{align*}        
	\begin{align*}
	\L-Spec(\mathcal{B}(G))&=\left\lbrace(0)^{9}, (1)^{p^4q^4-9}, (2)^1, (p^2)^1, (q^2)^1, (p^4-p^2+1)^1, (q^4-q^2+1)^1, \right. \\
    & \qquad  \left. (p^2q^2-p^2-q^2+2)^1, (p^4q^2-p^2q^2-p^4+p^2+1)^1, \right. \\
    & \qquad \left. (p^2q^4-p^2q^2-q^4+q^2+1)^1, (p^4q^4-p^2q^4-p^4q^2+p^2q^2+1)^1\right\rbrace \\
    & = \Q-Spec(\mathcal{B}(G))
	\end{align*}
    \begin{align*}
\text{and }    \CN-Spec(\mathcal{B}(G))&=\left \lbrace(0)^{10}, (-1)^{p^4q^4-9},(p^2-2)^1, (q^2-2)^1, (p^4-p^2-1)^1,  \right. \\
    & \qquad \left. (q^4-q^2-1)^1, (p^2q^2-p^2-q^2)^1, (p^4q^2-p^2q^2-p^4+p^2-1)^1, \right. \\
    & \qquad \left. (p^2q^4-p^2q^2-q^4+q^2-1)^1, (p^4q^4-p^2q^4-p^4q^2+p^2q^2-1)^1 \right\rbrace.
\end{align*}
\end{theorem}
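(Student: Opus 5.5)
The plan follows the proofs of Theorems \ref{all_spec_cyclic_pq} and \ref{all_spec_cyclic_p^2q} verbatim, now with nine components. By Theorem \ref{structure_cyclic_p^2q^2}, $\mathcal{B}(G)$ is the disjoint union of $K_2$ and the eight stars $K_{1,m}$ with
\[
m \in \{p^2-1,\; p^4-p^2,\; q^2-1,\; q^4-q^2,\; (p^2-1)(q^2-1),\; q^2(p^2-1)(q^2-1),\; p^2(p^2-1)(q^2-1),\; p^2q^2(p^2-1)(q^2-1)\}.
\]
Here each $m$ has been written in factored form, e.g. $p^2q^4-p^2q^2-q^4+q^2=q^2(p^2-1)(q^2-1)$, and similarly for the other three mixed values. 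The adjacency, Laplacian, signless Laplacian and common neighbourhood matrices of a disjoint union are block diagonal. Hence each spectrum of $\mathcal{B}(G)$ is the multiset union of the corresponding spectra of the components. For $K_2$ we use $\Spec(K_2)=\{(\pm1)^1\}$, $\L-Spec(K_2)=\Q-Spec(K_2)=\{(0)^1,(2)^1\}$ and $\CN-Spec(K_2)=\{(0)^2\}$. For each star we apply Lemma \ref{all_spec_of_star}.

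For the adjacency spectrum, $K_{1,m}$ contributes $\pm\sqrt{m}$ and $(0)^{m-1}$. The factored forms give $\sqrt{m}$ directly as $\sqrt{p^2-1}$, $p\sqrt{p^2-1}$, $\sqrt{q^2-1}$, $q\sqrt{q^2-1}$, $\sqrt{(p^2-1)(q^2-1)}$, $q\sqrt{(p^2-1)(q^2-1)}$, $p\sqrt{(p^2-1)(q^2-1)}$ and $pq\sqrt{(p^2-1)(q^2-1)}$, matching the statement.

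For the Laplacian spectrum, each star contributes $0$, $(1)^{m-1}$ and $m+1$. This gives the listed values $p^2$, $q^2$, $p^4-p^2+1$, $q^4-q^2+1$, etc., together with $0$ and $2$ from $K_2$. The equality with $\Q-Spec$ follows from Lemma \ref{all_spec_of_star} (and the Remark). For the CN-spectrum, each star contributes $0$, $(-1)^{m-1}$ and $m-1$, and $K_2$ contributes $(0)^2$. This yields the nine listed values $p^2-2,\dots,p^4q^4-p^2q^4-p^4q^2+p^2q^2-1$.

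The only real work is the multiplicity bookkeeping, which relies on $\sum m = |G|^2-1 = p^4q^4-1$ (the edges other than the one in $K_2$).
\begin{itemize}
\item For $\Spec$, the zero eigenvalue has multiplicity $\sum(m-1) = p^4q^4-1-8 = p^4q^4-9$.
\item For $\L-Spec$, the eigenvalue $1$ has multiplicity $p^4q^4-9$, and $0$ has multiplicity $9$, one per component.
\item For $\CN-Spec$, the eigenvalue $-1$ has multiplicity $p^4q^4-9$, and $0$ has multiplicity $8+2=10$.
\end{itemize}

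As a check, the total count in each case should be $|V(\mathcal{B}(G))| = p^4q^4+9$. For $\Spec$ this is $(p^4q^4-9)+18$, for $\L-Spec$ it is $9+(p^4q^4-9)+9$, and for $\CN-Spec$ it is $10+(p^4q^4-9)+8$.

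One caveat is that coincidences between the listed values, such as $q^2-2$ equalling $0$, cannot occur, since $p,q\ge2$. So the multiplicities stated as $1$ are correct provided one reads them as a multiset union.
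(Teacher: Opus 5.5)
Your proposal is correct and follows the paper's proof. The paper likewise decomposes $\mathcal{B}(G)$ via Theorem~\ref{structure_cyclic_p^2q^2} into $K_2$ and eight stars, applies Lemma~\ref{all_spec_of_star} to each component, and takes the multiset union. Your factored forms of the star sizes and your multiplicity count via $\sum m = p^4q^4-1$ simply make explicit the bookkeeping the paper leaves implicit.
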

\begin{proof}
From Theorem \ref{structure_cyclic_p^2q^2}, we have \, $\, \mathcal{B}(G)\, =\, K_2 \sqcup K_{1, p^2-1} \sqcup K_{1, p^4-p^2} \sqcup K_{1, q^2-1} \sqcup K_{1, q^4-q^2} \sqcup K_{1, p^2q^2-p^2-q^2+1} \sqcup K_{1, p^2q^4-p^2q^2-q^4+q^2} \sqcup K_{1, p^4q^2-p^2q^2-p^4+p^2} \sqcup K_{1, p^4q^4-p^2q^4-p^4q^2+p^2q^2}$. Now, by Lemma \ref{all_spec_of_star}, we get 
\begin{center}
    $\Spec(K_2)=\{(\pm 1)^1 \, \Spec(K_{1, p^2-1})=\left\{(0)^{p^2-2}, (\pm \sqrt{p^2-1})^1\right\},$ $\Spec(K_{1, p^4-p^2})=\left\{(0)^{p^4-p^2-1}, (\pm p\sqrt{p^2-1})^1\right\},$
    $\Spec(K_{1, q^2-1})=\left\{(0)^{q^2-2}, (\pm \sqrt{q^2-1})^1\right\},$ $\Spec(K_{1, q^4-q^2})=\left\{(0)^{q^4-q^2-1}, (\pm q\sqrt{q^2-1})^1\right\},$
    $\Spec(K_{1, p^2q^2-p^2-q^2+1})=\left\{(0)^{p^2q^2-p^2-q^2}, (\pm\sqrt{(p^2-1)(q^2-1)})^1\right\}$, 
\end{center}
\begin{align*}
    \Spec(K_{1, p^2q^4-p^2q^2-q^4+q^2})&=\left\{(0)^{p^2q^4-p^2q^2-q^4+q^2-1}, (\pm q\sqrt{(p^2-1)(q^2-1)})^1\right\},
\end{align*}   
\begin{align*}
    \Spec(K_{1, p^4q^2-p^2q^2-p^4+p^2})&=\left\{(0)^{p^4q^2-p^2q^2-p^4+p^2-1}, (\pm p\sqrt{(p^2-1)(q^2-1)})^1\right\} 
\end{align*}    
and $\Spec(K_{1, p^4q^4-p^2q^4-p^4q^2+p^2q^2})=\left\{(0)^{p^4q^4-p^2q^4-p^4q^2+p^2q^2-1}, (\pm pq\sqrt{(p^2-1)(q^2-1)})^1\right\}$.
  
Since $\Spec(\mathcal{B}(G))$ is the union of the above multi-sets,  we get the desired expression for $\Spec(\mathcal{B}(G))$. Also, by Lemma \ref{all_spec_of_star}, we get
\begin{center}
    $\L-Spec(K_2)=\{(0)^1, (2)^1\},$ $ \L-Spec(K_{1, p^2-1})=\left\{ (0)^1, (1)^{p^2-2}, (p^2)^1\right\},$ $ \L-Spec(K_{1, p^4-p^2})=\left\{ (0)^1, (1)^{p^4-p^2-1}, (p^4-p^2+1)^1\right\},$ $\L-Spec(K_{1, q^2-1})=\left\{ (0)^1, (1)^{q^2-2}, (q^2)^1\right\}, $ $ \L-Spec(K_{1, q^4-q^2})=\left\{ (0)^1, (1)^{q^4-q^2-1}, (q^4-q^2+1)^1\right\},$
\end{center} 
\vspace{-0.3cm}
\begin{align*}
    \L-Spec(K_{1, p^2q^2-p^2-q^2+1})=\left\{(0)^1, (1)^{p^2q^2-p^2-q^2}, (p^2q^2-p^2-q^2+2)^1\right\},
\end{align*}  
\begin{align*}
    \L-Spec(K_{1, p^2q^4-p^2q^2-q^4+q^2})=\left\{(0)^1, (1)^{p^2q^4-p^2q^2-q^4+q^2-1}, (p^2q^4-p^2q^2-q^4+q^2+1)^1\right\}, 
\end{align*}
\begin{align*}
    \L-Spec(K_{1, p^4q^2-p^2q^2-p^4+p^2})=\left\{(0)^1, (1)^{p^4q^2-p^2q^2-p^4+p^2-1}, (p^4q^2-p^2q^2-p^4+p^2+1)^1\right\} 
\end{align*}
and $\L-Spec(K_{1, p^4q^4-p^2q^4-p^4q^2+p^2q^2})=$
\begin{align*}
   \left\{(0)^1, (1)^{p^4q^4-p^2q^4-p^4q^2+p^2q^2-1}, (p^4q^4-p^2q^4-p^4q^2+p^2q^2+1)^1\right\}.
\end{align*}
Since $\L-Spec(\mathcal{B}(G))$ is the union of the above multi-sets,  we get the desired expression for $\L-Spec(\mathcal{B}(G))$ and hence the expression for $\Q-Spec(\mathcal{B}(G))$. Further, we have
\begin{center}
    $\CN-Spec(K_2)=\{(0)^2\}, \, \CN-Spec(K_{1, p^2-1})=\left\{(0)^1, (-1)^{p^2-2}, (p^2-2)^1\right\},$ $\CN-Spec(K_{1, p^4-p^2})=\left\{(0)^1, (-1)^{p^4-p^2-1}, (p^4-p^2-1)^1\right\},$ 
    $\CN-Spec(K_{1, q^2-1})=\left\{(0)^1, (-1)^{q^2-2}, (q^2-2)^1\right\},$ $\CN-Spec(K_{1, q^4-q^2})=\left\{(0)^1, (-1)^{q^4-q^2-1}, (q^4-q^2-1)^1\right\},$  $\CN-Spec(K_{1, p^2q^2-p^2-q^2+1})=\left\{(0)^1, (-1)^{p^2q^2-p^2-q^2}, (p^2q^2-p^2-q^2)^1\right\}$, 
\end{center}
\begin{align*}
    \CN-Spec(K_{1, p^2q^4-p^2q^2-q^4+q^2})&\\
   =& \left\{(0)^1, (-1)^{p^2q^4-p^2q^2-q^4+q^2-1},  (p^2q^4-p^2q^2-q^4+q^2-1)^1\right\},
\end{align*} 
\begin{align*}
    \CN-Spec(K_{1, p^4q^2-p^2q^2-p^4+p^2})&\\
    =&\left\{(0)^1, (-1)^{p^4q^2-p^2q^2-p^4+p^2-1},  (p^4q^2-p^2q^2-p^4+p^2-1)^1\right\} 
\end{align*} 
\begin{align*}
 \text{ and }   \CN-Spec&(K_{1, p^4q^4-p^2q^4-p^4q^2+p^2q^2})\\
 &\qquad =\left\{(0)^1, (-1)^{p^4q^4-p^2q^4-p^4q^2+p^2q^2-1},  (p^4q^4-p^2q^4-p^4q^2+p^2q^2-1)^1\right\}.
\end{align*} 
Since $\CN-Spec(\mathcal{B}(G))$ is the union of the above multi-sets,  we get the desired expression of $\CN-Spec(\mathcal{B}(G))$.    
\end{proof}
\begin{theorem}\label{all_energy_cyclic_p^2q^2}
If $G$ is a cyclic group of order $p^2q^2$, where $p$ and $q$ are two  primes such that $p < q$, then
    $E(\mathcal{B}(G))= 2+2(1+p)\sqrt{p^2-1}+2(1+q)\sqrt{q^2-1}+2(1+p+q+pq)\sqrt{(p^2-1)(q^2-1)}$,
	$LE(\mathcal{B}(G))= LE^+(\mathcal{B}(G))=\frac{2p^8q^8+162}{p^4q^4+9}$ 
	\quad  and \,
	$E_{CN}(\mathcal{B}(G))=2p^4q^4-18$.
\end{theorem}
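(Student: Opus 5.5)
The plan follows the template of Theorems \ref{all_energy_cyclic_pq} and \ref{all_energy_cyclic_p^2q}, reading everything off Theorem \ref{all_spec_cyclic_p^2q^2}. Throughout, write $N=p^4q^4$ and $|V(\mathcal{B}(G))|=N+9$, with $m=|e(\mathcal{B}(G))|=N$ (see Theorem \ref{structure_cyclic_p^2q^2}).

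\emph{Ordinary energy.} The nonzero eigenvalues are $\pm1$ together with $\pm\sqrt{p^2-1}$, $\pm p\sqrt{p^2-1}$, $\pm\sqrt{q^2-1}$, $\pm q\sqrt{q^2-1}$ and $\pm s\sqrt{(p^2-1)(q^2-1)}$ for $s\in\{1,p,q,pq\}$. Summing absolute values and grouping gives the stated $E(\mathcal{B}(G))$ directly.

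\emph{CN-energy.} The CN-spectrum contributes $N-9$ from the eigenvalues $-1$. The nine positive eigenvalues are $d_H-1$, where $d_H=\deg_{\mathcal{B}(G)}(H)$ runs over the eight nontrivial subgroups; the $K_2$ component contributes $0$. Since $\sum_{H\neq\{1\}} d_H = N-1$, these sum to $(N-1)-8$. Hence $E_{CN}=2N-18$.

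\emph{Laplacian energy.} Set $c=\frac{2m}{n}=\frac{2N}{N+9}$, so $0<c<2$. Then:
\begin{itemize}
\item the nine zero eigenvalues contribute $9c$;
\item the $N-9$ eigenvalues equal to $1$ contribute $(N-9)\frac{N-9}{N+9}$, since $N>9$;
\item the eigenvalue $2$ contributes $2-c$.
\end{itemize}
The remaining eight eigenvalues are $d_H+1\ge p^2\ge 4>c$, so each contributes $d_H+1-c$, and together they give $(N-1)+8-8c$. Adding everything,
\[
LE=9c+\frac{(N-9)^2}{N+9}+(N+9)-9c=\frac{(N-9)^2+(N+9)^2}{N+9}=\frac{2N^2+162}{N+9},
\]
which is the claimed value. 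Equality with $LE^+$ follows from the Remark that the L- and Q-spectra coincide.

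\emph{Main obstacle.} The only real difficulty is bookkeeping: checking the sign of each $\lambda-c$, which the bound $d_H+1\ge 4>2>c$ handles uniformly. Using the identity $\sum d_H=N-1$ lets us avoid expanding the eight explicit polynomials the way the earlier proofs do.
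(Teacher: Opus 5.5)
Your proof is correct, and it departs from the paper's at the one step that involves real work.

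\textbf{How the paper does it.} The paper also reads the spectra off Theorem \ref{all_spec_cyclic_p^2q^2}. It then writes each of the eleven quantities $|\lambda-c|$ as an explicit rational function of $p$ and $q$ and adds them, leaving the final simplification to the reader. For $E_{CN}$ it similarly adds the eight explicit polynomials $d_H-1$.

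\textbf{How you do it.} You replace all of that with the single identity $\sum_{H\in L(G)}\deg_{\mathcal{B}(G)}(H)=|e(\mathcal{B}(G))|=|G|^2$, which holds because each pair $(a,b)$ has degree $1$. You combine it with the uniform sign check $d_H+1\ge 2>c$. This check also covers the eigenvalue $2$ from $K_2$, read as $d_{\{1\}}+1$.

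\textbf{What your route buys.} Beyond avoiding error-prone expansions, your computation shows that
$LE(\mathcal{B}(G))=\frac{(N-k)^2+(N+k)^2}{N+k}=\frac{2N^2+2k^2}{N+k}$ and $E_{CN}(\mathcal{B}(G))=2N-2k$, where $N=|G|^2$ and $k=|L(G)|$. This holds for any finite group all of whose subgroups are $2$-generated, since then every $d_H\ge 1$ and hence $N\ge k$; in particular it holds for every finite cyclic group. It explains the common shape of Theorems \ref{all_energy_cyclic_p^n}, \ref{all_energy_cyclic_pq}, \ref{all_energy_cyclic_p^2q} and the present one, where $|L(G)|$ equals $n+1$, $4$, $6$, $9$ respectively. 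It also goes some way toward the question the paper leaves open at the end. What the paper's term-by-term route gives in exchange is only that each individual absolute value is displayed explicitly.

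\textbf{One small slip.} In your CN-energy paragraph there are eight positive eigenvalues, not nine, since the $K_2$ component contributes $0$. Your sum $(N-1)-8$ is nonetheless correct.
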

\begin{proof}
    From Theorem \ref{all_spec_cyclic_p^2q^2} and definition of $E(\mathcal{B}(G))$ and $E_{CN}(\mathcal{B}(G))$, we have
    \begin{align*}
        E(\mathcal{B}&(G))=2+2\sqrt{p^2-1}+2\sqrt{q^2-1}+2p\sqrt{p^2-1}+2q\sqrt{q^2-1}+2\sqrt{(p^2-1)(q^2-1)} \\
        & \qquad \qquad +2p\sqrt{(p^2-1)(q^2-1)}+2q\sqrt{(p^2-1)(q^2-1)}+2pq\sqrt{(p^2-1)(q^2-1)} 
        %&= 2+2(1+p)\sqrt{p^2-1}+2(1+q)\sqrt{q^2-1}+2(1+p+q+pq)\sqrt{(p^2-1)(q^2-1)} 
    \end{align*}
    \begin{align*}
     \text{ and } \, E_{CN}(\mathcal{B}(G))&=(\underbrace{1+1+\cdots+1}
        _{(p^4q^4-9)\text{-times}})+p^2-2+q^2-2+p^4-p^2-1+q^4-q^2-1 \\
        & \qquad \qquad \qquad +p^2q^2-p^2-q^2+p^4q^2-p^2q^2-p^4+p^2-1+p^2q^4 \\
        & \qquad \qquad \qquad -p^2q^2-q^4+q^2-1+p^4q^4-p^2q^4-p^4q^2+p^2q^2-1.
        %&= 2p^4q^4-18.
    \end{align*}
Thus we get the required expressions for $E(\mathcal{B}(G))$ and $E_{CN}(\mathcal{B}(G))$.

Further, from Theorem \ref{all_spec_cyclic_p^2q^2} and Theorem \ref{structure_cyclic_p^2q^2}, we have 

\noindent $\L-Spec(\mathcal{B}(G))= \Q-Spec(\mathcal{B}(G))=\lbrace(0)^{9}, (1)^{p^4q^4-9}, (2)^1, (p^2)^1, (q^2)^1, (p^4-p^2+1)^1, (q^4-q^2+1)^1, (p^2q^2-p^2-q^2+2)^1, (p^4q^2 -p^2q^2-p^4+p^2+1)^1, (p^2q^4-p^2q^2-q^4+q^2+1)^1, (p^4q^4-p^2q^4-p^4q^2+p^2q^2+1)^1 \rbrace $ 

\noindent and $\frac{2m}{n}=\frac{2p^4q^4}{p^4q^4+9}$. Now, $|0-\frac{2p^4q^4}{p^4q^4+9}|=\frac{2p^4q^4}{p^4q^4+9}$, $|1-\frac{2p^4q^4}{p^4q^4+9}|=\frac{p^4q^4-9}{p^4q^4+9}$, $|2-\frac{2p^4q^4}{p^4q^4+9}|=\frac{18}{p^4q^4+9}$, $|p^2-\frac{2p^4q^4}{p^4q^4+9}|=\frac{p^6q^4+9p^2-2p^4q^4}{p^4q^4+9}$, $|q^2-\frac{2p^4q^4}{p^4q^4+9}|=\frac{p^4q^6+9q^2-2p^4q^4}{p^4q^4+9}$, $|(p^4-p^2+1)-\frac{2p^4q^4}{p^4q^4+9}|=\frac{p^8q^4-p^6q^4-p^4q^4+9p^4-9p^2+9}{p^4q^4+9}$, $|(q^4-q^2+1)-\frac{2p^4q^4}{p^4q^4+9}|\!=\!\frac{p^4q^8\!-p^4q^6\!-p^4q^4+9q^4-9q^2+9}{p^4q^4+9}$, $|(p^2q^2-p^2-q^2+2)-\frac{2p^4q^4}{p^4q^4+9}|\!=\!\frac{p^6q^6-p^6q^4-p^4q^6+9p^2q^2-9p^2-9q^2+18}{p^4q^4+9}$, $|(p^2q^4-p^2q^2-q^4+q^2+1)-\frac{2p^4q^4}{p^4q^4+9}|=\frac{p^6q^8-p^6q^6-p^4q^8+p^4q^6-p^4q^4+9p^2q^4-9p^2q^2-9q^4+9q^2+9}{p^4q^4+9}$, $|(p^4q^2-p^2q^2-p^4+p^2+1)-\frac{2p^4q^4}{p^4q^4+9}|=\frac{p^8q^6-p^6q^6-p^8q^4+p^6q^4-p^4q^4+9p^4q^2-9p^2q^2-9p^4+9p^2+9}{p^4q^4+9}$ and $|(p^4q^4-p^2q^4-p^4q^2+p^2q^2+1)-\frac{2p^4q^4}{p^4q^4+9}|=\frac{p^8q^8-p^6q^8-p^8q^6+p^6q^6+8p^4q^4-9p^2q^4-9p^4q^2+9p^2q^2+9}{p^4q^4+9}$. Therefore, by definition of (signless) Laplacian energy, we have
\begin{align*}
    LE(\mathcal{B}(G))&= LE^+(\mathcal{B}(G))\\
    & = 9 \times \frac{2p^4q^4}{p^4q^4+9}+(p^4q^4-9)\frac{p^4q^4-9}{p^4q^4+9}+\frac{18}{p^4q^4+9}+\frac{p^6q^4+9p^2-2p^4q^4}{p^4q^4+9} \\
    & \qquad +\frac{p^4q^6+9q^2-2p^4q^4}{p^4q^4+9}+\frac{p^8q^4-p^6q^4-p^4q^4+9p^4-9p^2+9}{p^4q^4+9} \\
    & \qquad +\frac{p^4q^8-p^4q^6-p^4q^4+9q^4-9q^2+9}{p^4q^4+9} \\
    & \qquad + \frac{p^6q^6-p^6q^4-p^4q^6+9p^2q^2-9p^2-9q^2+18}{p^4q^4+9}\\
    & \qquad + \frac{p^6q^8-p^6q^6-p^4q^8+p^4q^6-p^4q^4+9p^2q^4-9p^2q^2-9q^4+9q^2+9}{p^4q^4+9} \\
    & \qquad +\frac{p^8q^6-p^6q^6-p^8q^4+p^6q^4-p^4q^4+9p^4q^2-9p^2q^2-9p^4+9p^2+9}{p^4q^4+9} \\
    & \qquad +\frac{p^8q^8-p^6q^8-p^8q^6+p^6q^6+8p^4q^4-9p^2q^4-9p^4q^2+9p^2q^2+9}{p^4q^4+9}.
%    &=\frac{2p^8q^8+162}{p^4q^4+9}.
\end{align*}
Hence, we get the required expression.
%This completes the proof.
\end{proof}

\begin{theorem}\label{all_energy_comparison_cyclic_p^2q^2}
	If $G$ is a cyclic group of order $p^2q^2$, where $p$ and $q$ are any distinct primes, then $\mathcal{B}(G)$ is hypoenergetic but not hyperenergetic,  L-hyperenergetic, Q-hyperenergetic and CN-hyperenergetic.
\end{theorem}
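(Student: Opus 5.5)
The plan is to follow the template of Theorems \ref{all_energy_comparison_cyclic_p^n}, \ref{all_energy_comparison_cyclic_pq} and \ref{all_energy_comparison_cyclic_p^2q}, using the data from Theorem \ref{structure_cyclic_p^2q^2} and Theorem \ref{all_energy_cyclic_p^2q^2}. First, $|V(\mathcal{B}(G))| = |G|^2 + |L(G)| = p^4q^4+9$. For hypoenergeticity, I would compare $E(\mathcal{B}(G))$ component by component with the vertex counts of the stars. Each nontrivial component is a star $K_{1,m}$ with $m+1$ vertices and energy $2\sqrt{m}$. Since $(\sqrt{m}-1)^2\ge 0$ with equality only at $m=1$, we have $m+1 \ge 2\sqrt m$, with strict inequality for $m\ge 2$.

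Concretely, I would list the eight inequalities
\[
p^2>2\sqrt{p^2-1},\quad q^2>2\sqrt{q^2-1},\quad p^4-p^2+1>2p\sqrt{p^2-1},\quad q^4-q^2+1>2q\sqrt{q^2-1},
\]
together with the analogous ones for the four mixed stars. These compare $m+1$ with $2\sqrt m$ for
\[
m\in\{(p^2-1)(q^2-1),\; q^2(p^2-1)(q^2-1),\; p^2(p^2-1)(q^2-1),\; p^2q^2(p^2-1)(q^2-1)\}.
\]
The $K_2$ component contributes $2$ vertices and energy $2$, so it gives equality. Summing the vertex counts gives exactly $p^4q^4+9$, and summing the energies gives the expression of Theorem \ref{all_energy_cyclic_p^2q^2}. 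Hence $|V(\mathcal{B}(G))|>E(\mathcal{B}(G))$. The only real bookkeeping step is to confirm that the sum of $(m+1)$ over all nine components is $p^4q^4+9$. This follows immediately, since the edges number $|G|^2=p^4q^4$ and there are nine components.

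Non-hyperenergeticity then follows from
\[
E(K_{p^4q^4+9})=2p^4q^4+16>p^4q^4+9>E(\mathcal{B}(G)).
\]
For the Laplacian energies I would compute
\[
LE(K_{p^4q^4+9})-LE(\mathcal{B}(G)) = 2p^4q^4+16-\frac{2p^8q^8+162}{p^4q^4+9} = \frac{34p^4q^4-18}{p^4q^4+9}>0.
\]
This gives non-L- and non-Q-hyperenergeticity, because $LE=LE^+$ by the Remark following Lemma \ref{all_energy_of_star}. Finally,
\[
E_{CN}(K_{p^4q^4+9})=(2p^4q^4+16)(p^4q^4+7)>2p^4q^4-18=E_{CN}(\mathcal{B}(G)),
\]
which handles the CN case.

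None of these steps is a genuine obstacle. The only place requiring care is the simplification of the $LE$ difference, together with checking that the numerator $34p^4q^4-18$ is positive; this is clear since $p^4q^4\ge 1296$. The statement is for any distinct primes, but the formulas of Theorem \ref{all_energy_cyclic_p^2q^2} are symmetric in $p,q$, so the assumption $p<q$ there costs nothing.
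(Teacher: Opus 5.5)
Your proposal is correct and follows the paper's proof essentially step for step. You sum the same eight star-wise inequalities $m+1>2\sqrt{m}$ (with the $K_2$ component giving equality) to get $|V(\mathcal{B}(G))|=p^4q^4+9>E(\mathcal{B}(G))$, then make the same comparisons with $E(K_{p^4q^4+9})$, with $LE(K_{p^4q^4+9})$ via $\frac{34p^4q^4-18}{p^4q^4+9}>0$, and with $E_{CN}(K_{p^4q^4+9})$; your use of $(\sqrt{m}-1)^2\ge 0$ and of the edge count $|G|^2$ only makes explicit what the paper states directly.
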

\begin{proof}
	By Theorem \ref{structure_cyclic_p^2q^2} and Theorem \ref{all_energy_cyclic_p^2q^2}, we have $|V(\mathcal{B}(G))|=p^4q^4+9$ and $E(\mathcal{B}(G))=2 \, + \, 2\sqrt{p^2-1}\, + \, 2\sqrt{q^2-1}\, + \, 2p\sqrt{p^2-1} \, + \, 2q\sqrt{q^2-1} \, + \, 2\sqrt{(p^2-1)(q^2-1)} \, + \, \\
    2p\sqrt{(p^2-1)(q^2-1)} +  2q\sqrt{(p^2-1)(q^2-1)} + 2pq\sqrt{(p^2-1)(q^2-1)}$. Since $p^2>2\sqrt{p^2-1}$, $q^2>2\sqrt{q^2-1}$, $p^4-p^2+1>2\sqrt{p^4-p^2}  =2p\sqrt{p^2-1}$, $q^4-q^2+1>2\sqrt{q^4-q^2} =2q\sqrt{q^2-1}$, $p^2q^2-p^2-q^2+2 >2\sqrt{p^2q^2-p^2-q^2+1}  = 2\sqrt{(p^2-1)(q^2-1)}$, $p^4q^2-p^2q^2-p^4+p^2+1>2\sqrt{p^4q^2-p^2q^2-p^4+p^2}=2p\sqrt{(p^2-1)(q^2-1)}$, $p^2q^4-p^2q^2-q^4+q^2+1> 2\sqrt{p^2q^4-p^2q^2-q^4+q^2}=2q\sqrt{(p^2-1)(q^2-1)}$ and $p^4q^4-p^2q^4-p^4q^2+p^2q^2+1> 2\sqrt{p^4q^4-p^2q^4-p^4q^2+p^2q^2}=2pq\sqrt{(p^2-1)(q^2-1)}$ we have 
	\begin{align*}
	    p^2+q^2+&p^4-p^2+1+q^4-q^2+1+p^2q^2-p^2-q^2+2+p^4q^2-p^2q^2-p^4 \\
        & +p^2+1+p^2q^4-p^2q^2-q^4+q^2+1+p^4q^4-p^2q^4-p^4q^2+p^2q^2+1+2 \\
        &> 2 + 2\sqrt{p^2-1} +  2\sqrt{q^2-1} +  2p\sqrt{p^2-1} +  2q\sqrt{q^2-1} + 2\sqrt{(p^2-1)(q^2-1)}  \\
        & \qquad + 2p\sqrt{(p^2-1)(q^2-1)} +  2q\sqrt{(p^2-1)(q^2-1)} + 2pq\sqrt{(p^2-1)(q^2-1)}.
	\end{align*}
	Therefore,
	\begin{equation} \label{cyclic-p^2q^2-hypo}
		|V(\mathcal{B}(G))|> E(\mathcal{B}(G)).
	\end{equation}	 
	Thus, $\mathcal{B}(G)$ is hypoenergetic. 
	
	We have $E(K_{|V(\mathcal{B}(G))|}) = E(K_{p^4q^4+9})= 2(p^4q^4+9-1)=2p^4q^4+16>p^4q^4+9= |V(\mathcal{B}(G))|> E(\mathcal{B}(G))$ (using \eqref{cyclic-p^2q^2-hypo}). Therefore, $\mathcal{B}(G)$ is not hyperenergetic.

	Also, $LE(K_{p^4q^4+9})=LE^+(K_{p^4q^4+9})=2p^4q^4+16$. From Theorem \ref{all_energy_cyclic_p^2q}, we have $LE(\mathcal{B}(G))$ $=LE^+(\mathcal{B}(G))=\frac{2p^8q^8+162}{p^4q^4+9}$. Now, 
	\begin{align*}
		LE(K_{p^4q^4+9})-LE(\mathcal{B}(G))&= LE^+(K_{p^4q^4+9})-LE^+(\mathcal{B}(G))\\ &=2p^4q^4+16-\frac{2p^8q^8+162}{p^4q^4+9} 
		= \frac{34p^4q^4-18}{p^4q^4+9} > 0.
	\end{align*}
Therefore, 
	$
	LE^+(K_{p^4q^4+9}) = LE(K_{p^4q^4+9}) > LE(\mathcal{B}(G))=LE^+(\mathcal{B}(G)).
	$
	Hence,  $\mathcal{B}(G)$ is neither L-hyperenergetic nor  Q-hyperenergetic.
	
	We have  $E_{CN}(K_{p^4q^4+9})=2(p^4q^4+9-1)(p^4q^4+9-2)=(2p^4q^4+16)(p^4q^4+7) > 2p^4q^4-18 = E_{CN}(\mathcal{B}(G))$ (using Theorem \ref{all_energy_cyclic_p^2q^2}). 
	Hence, $\mathcal{B}(G)$ is not CN-hyperenergetic. This completes the proof.
\end{proof}

 \begin{theorem}
	If $G$ is a cyclic group of order $p^2q^2$, where $p$ and $q$ are two primes such that $p < q$, then $E(\mathcal{B}(G))<LE(\mathcal{B}(G))$.
\end{theorem}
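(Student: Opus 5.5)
We follow the same pattern as the analogous results for the cyclic groups of order $p^n$, $pq$ and $p^2q$: we insert $|V(\mathcal{B}(G))|$ between the two energies. By Theorem \ref{all_energy_cyclic_p^2q^2} we have
\[
E(\mathcal{B}(G))= 2+2(1+p)\sqrt{p^2-1}+2(1+q)\sqrt{q^2-1}+2(1+p+q+pq)\sqrt{(p^2-1)(q^2-1)}
\]
and
\[
LE(\mathcal{B}(G))=\frac{2p^8q^8+162}{p^4q^4+9}.
\]
From Theorem \ref{all_energy_comparison_cyclic_p^2q^2}, specifically inequality \eqref{cyclic-p^2q^2-hypo} in its proof, we already know that $|V(\mathcal{B}(G))|=p^4q^4+9> E(\mathcal{B}(G))$. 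It therefore suffices to show that $LE(\mathcal{B}(G))>|V(\mathcal{B}(G))|$.

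For the second inequality we compute the difference directly, putting it over the common denominator:
\begin{align*}
LE(\mathcal{B}(G))-|V(\mathcal{B}(G))| &= \frac{2p^8q^8+162}{p^4q^4+9}-(p^4q^4+9) \\
&= \frac{2p^8q^8+162-(p^4q^4+9)^2}{p^4q^4+9}.
\end{align*}
Since $(p^4q^4+9)^2 = p^8q^8+18p^4q^4+81$, the numerator simplifies to
\[
p^8q^8-18p^4q^4+81=(p^4q^4-9)^2 = p^4q^4(p^4q^4-18)+81.
\]
This is strictly positive, because $p^4q^4\ge 2^4\cdot 3^4>18$ for distinct primes. (It is also positive simply as a nonzero square.) The denominator is positive, so $LE(\mathcal{B}(G))>|V(\mathcal{B}(G))|$.

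Chaining the two inequalities gives $LE(\mathcal{B}(G))>|V(\mathcal{B}(G))|>E(\mathcal{B}(G))$, which proves the theorem. The only step needing care is the algebraic simplification of the numerator. The hypoenergetic bound, which is the genuinely nontrivial estimate, is already available from Theorem \ref{all_energy_comparison_cyclic_p^2q^2}, so no real obstacle is expected.
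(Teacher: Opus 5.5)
Your proposal is correct and follows essentially the same route as the paper: you place $|V(\mathcal{B}(G))|=p^4q^4+9$ between the two energies, taking $|V(\mathcal{B}(G))|>E(\mathcal{B}(G))$ from the hypoenergetic result and showing $LE(\mathcal{B}(G))-|V(\mathcal{B}(G))|=\frac{p^8q^8-18p^4q^4+81}{p^4q^4+9}>0$. Your remark that this numerator equals $(p^4q^4-9)^2$ is a clean way to see positivity, and you correctly subtract $(p^4q^4+9)$ with parentheses, which the paper's displayed line omits.
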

\begin{proof}
	By Theorem \ref{all_energy_cyclic_p^2q^2}, we get
	$E(\mathcal{B}(G)) =2+2(1+p)\sqrt{p^2-1}+2(1+q)\sqrt{q^2-1}+2(1+p+q+pq)\sqrt{(p^2-1)(q^2-1)}$ and $LE(\mathcal{B}(G)) =\frac{2p^8q^8+162}{p^4q^4+9}$. Also, from Theorem \ref{all_energy_comparison_cyclic_p^2q^2}, we have $|V(\mathcal{B}(G))|=p^4q^4+9> E(\mathcal{B}(G))$. Now, for all $p, q \geq 2$, we have
 \begin{align*}
     LE(\mathcal{B}(G))-|V(\mathcal{B}(G))|=\frac{2p^8q^8+162}{p^4q^4+9}-p^4q^4+9
     = \frac{p^8q^8-18p^4q^4+81}{p^4q^4+9} 
     = \frac{p^4q^4(p^4q^4-18)+81}{p^4q^4+9} >0.
 \end{align*}
 Hence, $LE(\mathcal{B}(G))= LE^+(\mathcal{B}(G))>|V(\mathcal{B}(G))|>E(\mathcal{B}(G))$.
 \end{proof}
 We conclude this section noting that if $G$ is a cyclic group of order $p^n, pq, p^2q$ and $p^2q^2$ for any two distinct primes $p$ and $q$ and $n \geq 1$, then
 \begin{enumerate}
     \item $\mathcal{B}(G)$ is not integral but L-integral, Q-integral and CN-integral.
     \item $\mathcal{B}(G)$ is hypoenergetic but neither hyperenergetic, L-hyperenergetic, Q-hyper-energetic nor CN-hyperenergetic.
     \item $\mathcal{B}(G)$ satisfies E-LE conjecture.
 \end{enumerate}
 It may be interesting to conclude the same for any cyclic group.

%{\bf Acknowledgement.} 

%\section*{Declarations}
%\begin{itemize}
%	\item Funding: No funding was received by the authors.
%	\item Conflict of interest: The authors declare that they have no conflict of interest.
%	\item Availability of data and materials: No data was used in the preparation of this manuscript.
%%	\item Authors' contributions: all the authors contributed equally to this work.
%\end{itemize}

\end{document}